\definecolor{e-mail}{rgb}{0,.40,.80}
\definecolor{reference}{rgb}{.20,.60,.22}
\definecolor{citation}{rgb}{0,.40,.80}
\newtheorem{thm}{Theorem}
\newtheorem{lem}[thm]{Lemma}
\newtheorem{prop}[thm]{Proposition}
\theoremstyle{definition}
\newtheorem{defn}[thm]{Definition}
\theoremstyle{remark}
\newtheorem{rem}[thm]{Remark}
\numberwithin{thm}{section}
\theoremstyle{definition}
\newtheorem*{setting}{Setting}
\theoremstyle{definition}
\newtheorem{algo}{Algorithm}
\newtheorem{eg}{Example}
\theoremstyle{definition}
\numberwithin{equation}{section}
\newcommand{\gal}{\mathrm{Gal}_{\delta_x}\!}
\newcommand{\pgal}{\mathrm{Gal}_\Delta}
\newcommand\phantomarrow[2]{%
  \setbox0=\hbox{$\displaystyle #1\to$}%
  \hbox to \wd0{%
    $#2\mapstochar
     \cleaders\hbox{$\mkern-1mu\relbar\mkern-3mu$}\hfill
     \mkern-7mu\rightarrow$}%
  \,}
 \title[Computing Galois groups of parameterized second order equations]{Computing the differential Galois group of a one-parameter family of second order linear differential equations}
\author{Carlos E. Arreche}
\email{carreche@gc.cuny.edu}
\address{Mathematics Department, The Graduate Center of the City University of New York, New York, NY 10016}
 \keywords{Differential Galois group, differential equations with parameters, parameterized Picard-Vessiot theory, linear differential algebraic groups, Kovacic's algorithm.}
 \subjclass[2010]{Primary 34M15; Secondary 12H20, 34M03, 20H20, 13N10, 37K20}
 \thanks{The author was partially supported by a Ford Foundation Predoctoral Fellowship and by NSF grant CCF-0952591.}
\begin{document}
 \setcounter{section}{-1}
 
 \begin{abstract}We develop algorithms to compute the differential Galois group corresponding to a one-parameter family of second order homogeneous ordinary linear differential equations with rational function coefficients. More precisely, we consider equations of the form\begin{equation*} \frac{\partial^2Y}{\partial x^2}+ r_1\frac{\partial Y}{\partial x} +r_2Y=0,  \end{equation*} where $r_1,r_2\in C(x,t)$ and $C$ is an algebraically closed field of characteristic zero.
 
We work in the setting of parameterized Picard-Vessiot theory, which attaches a linear differential algebraic group to such an equation, that is, a group of invertible matrices whose entries satisfy a system of polynomial differential equations, with respect to the derivation in the parameter-space. We will compute the $\frac{\partial}{\partial t}$-differential-polynomial equations that define the corresponding parameterized Picard-Vessiot group as a differential algebraic subgroup of $\mathrm{GL}_2$. \end{abstract}
 
 \maketitle
 \vspace{.125in}
\tableofcontents
 
 %-------------------------------------------------------------------
 \section{Introduction}\label{introduction}
\subsection{Background}In differential Galois theory, one studies a differential equation with coefficients in a given differential field $K$ (\S\ref{diffalg}), by investigating the differential field extension of $K$ generated by the solutions for the equation, together with their derivatives. This is in analogy with the classical Galois theory of polynomial equations, where one studies such equations by studying the splitting field of the polynomial. In both cases, the algebraic relations amongst the solutions for the equation are reflected in the algebraic structure of the group of automorphisms of the field extension over the base field. The differential Galois theory of linear differential equations was developed by Kolchin \cite{kolchin:1948, kolchin:1976}, putting earlier work of Picard and Vessiot on a firm modern footing. The differential Galois group corresponding to a linear differential equation is a linear algebraic group over the subfield of constants of $K$ (\S\ref{diffalg}).

In \cite{cassidy-singer:2006}, a parameterized Picard-Vessiot theory for linear differential equations with parameters is developed in close analogy with Kolchin's differential Galois theory. This theory is a special case of an earlier generalization of Kolchin's theory, developed in \cite{landesman:2008}. In the parameterized theory, the base field $K$ is a differential field, where there are now two kinds of derivation: \emph{principal} derivations and \emph{parametric} derivations, so that the subfield of constants, with respect to the principal derivations, is a differential field with respect to the parametric derivations. The $\mathrm{PPV}$ field (Definition~\ref{ppv-def}) of a system of linear differential equations, with respect to the principal derivations, is generated over the base field by the solutions for the system, together with their derivatives with respect to \emph{all} (principal, as well as parametric) derivations. The $\mathrm{PPV}$ groups (Definition~\ref{pgal-def}) which arise in this theory are linear differential algebraic groups (Definition~\ref{ldga-def}), and their structure reflects the \emph{differential-algebraic} relations amongst the solutions, with respect to the parametric derivations. Linear differential algebraic groups are the differential-algebraic analogues of linear algebraic groups---that is, they are subgroups of $\mathrm{GL}_n$ which are defined by the vanishing of systems of polynomial differential equations in the matrix entries.

The parametrized Picard-Vessiot theory has been finding a diverse number of applications in other areas. We shall list a few examples, although we will not attempt to be exhaustive. In \cite[\S5]{gor-ov:2012}, the theory is related to Gauss-Manin connections \cite{manin:1964, katz-oda:1968}, leading to many potential applications in algebraic geometry. In \cite{cassidy-singer:2006,mitschi-singer:2012b,gill-gor-ov:2012,gor-ov:2012} the theory is applied to the study of isomonodromy, and in \cite{mitschi-singer:2012a} it is applied to monodromy-evolving deformations, and to the study of some equations in mathematical physics; see \cite[\S 1]{mitschi-singer:2012b} for more references on this. In \cite{hardouin-singer:2008}, the authors apply a generalization of this theory to the study of difference equations, which has numerous applications. See also \cite{gill-gor-ov:2012}, where the authors recast the theory in a new light and suggest several applications. 

\subsection{This work}We develop algorithms to compute the $\mathrm{PPV}$ group (Definition~\ref{pgal-def}) associated to a second order homogeneous linear differential equation defined over $C(x,t)$; that is, an equation of the form \begin{equation*} \tag{\ref{original}}\frac{\partial^2Y}{\partial x^2}+r_1\frac{\partial Y}{\partial x}+r_2Y=0, \end{equation*} where $r_1,r_2\in C(x,t)$, and the principal and parametric derivations are $\frac{\partial}{\partial x}$ and $\frac{\partial}{\partial t}$, respectively. The \emph{key ingredient} in our approach is the Reduction Lemma~\ref{singer1}, which reduces the determination of the $\mathrm{PPV}$ group to solving some systems of linear equations. Our algorithms are Maple-ready, and we are currently working on writing a computer implementation.

For technical reasons,\footnote{In\cite{cassidy-singer:2006}, the authors impose this condition in order to ensure the existence of the $\mathrm{PPV}$ groups and extensions (see \S\ref{ppvtheory}).} it has become a tradition in the theory to assume that the field of constants, with respect to the principal derivations, is differentially closed (\S\ref{diffalg}). Therefore, we \hyperlink{setting}{work over} the field $K:=K_0(x)$, where $K_0$ is a $\frac{\partial}{\partial t}$-differentially closed field extension of $C(t)$. Working with differentially closed fields tends to be an obstacle in practical applications of the theory, since such fields are usually unnaturally large (see \cite[\S1]{wibmer:2011}). The groups produced by our algorithm are defined over $\overline{C(t)}$ (Theorem~\ref{wibmerthm}), and we only perform computations over finite algebraic extensions of $C(t)$---the differential closure plays a marginal role in our arguments. This is in agreement with the stronger and more general results of \cite[Theorem~2.8]{gill-gor-ov:2012}, which imply in particular that the groups we wish to compute are actually defined over $C(t)$, while it is shown in \cite{wibmer:2011}, by different methods, that they are defined over $\overline{C(t)}$.

\hypertarget{sit-class}{In} \cite{kovacic:1986}, Kovacic developed an algorithm which computes all the ``elementary" solutions to a second order order equation of the form~\eqref{eq}, if they exist. From these data, together with the classification of the algebraic subgroups of $\mathrm{SL}_2$, Kovacic's algorithm decides which algebraic subgroup of $\mathrm{SL}_2$ is the (non-parameterized) $\mathrm{PV}$ group (Definiton~\ref{gal-def}) of \eqref{eq}. The differential algebraic subgroups of $\mathrm{SL}_2(\mathcal{U})$, where $\mathcal{U}$ is a universal differential field,\footnote{The notion of \emph{universal differential field} \cite{kolchin:1976} is stronger than that of being differentially closed.} were classified in \cite{sit:1975}. In \cite[p. 137]{cassidy-singer:2006}, the authors ask whether one can use this classification to develop a parameterized analogue of Kovacic's algorithm. Although we do not rely on the classification itself, the methods in \cite{sit:1975} were a great source of inspiration in developing the algorithms presented here, and the outputs of our Algorithms \ref{borel} and \ref{dihedral} have the form of one of the groups in the classification in \cite{sit:1975} (Theorem~\ref{sitthm}).

An algorithm to compute the $\mathrm{PPV}$ group of~\eqref{eq}, provided that it is a Zariski-dense subgroup of $\mathrm{SL}_2$, is given in \cite{dreyfus:2011}. We develop algorithms to compute the $\mathrm{PPV}$ group of~\eqref{original} in the remaining cases. Some of the arguments we give in the other cases are similar to those in \cite{dreyfus:2011}, but we work in different settings: in \cite{dreyfus:2011}, the field of principal constants is assumed to be universal, and a theorem of Seidenberg is applied in order to obtain a concrete analytic interpretation for elements of this abstract universal differential field, as meromorphic functions on some complex polydisk. Aided by this concrete interpretation, the author obtains his results. We will reprove some of these results algebraically in the course of the proofs of Propositions~\ref{borel-proof} and \ref{dihedral-proof}, in order to be able to apply them in our setting.

Lastly, we wish to point to a subtlety which is new to the parametrized setting. In Kovacic's algorithm, one performs a change of variables on \eqref{original} in order to obtain a new equation~\eqref{eq}, whose $\mathrm{PV}$ group is a subgroup of $\mathrm{SL}_2$. Then, it is a simple matter to reconstruct the $\mathrm{PV}$ group of the original equation from these data, which is always an almost direct product\footnote{This is a quotient of the direct product of two groups, with finite kernel.} of the $\mathrm{PV}$ group of \eqref{eq} and a change-of-variables group (\S\ref{recover}). This is no longer true\footnote{This answers a question of Ovchinnikov \cite{ovchinnikov-email}, who suggested to the author to treat the general case where the $\mathrm{PPV}$ group is not assumed to be unimodular. He has pointed out to us that the parameterized and non-parameterized situations are still analogous: the $\mathrm{PPV}$ group of \eqref{original} is a quotient of the direct product of the $\mathrm{PPV}$ group of \eqref{eq} and the change-of-variables group, with zero-dimensional kernel.} for parameterized equations, where the richer differential-algebraic structure of the differential Galois groups allows them to be related to each other in more complicated ways. We have solved the problem of determining precisely how the $\mathrm{PPV}$ group of \eqref{original} is related to that of~\eqref{eq} in all cases, and we give an algorithm to compute this relationship in \S\ref{recover}.
 
 \subsection{Organization} We will now describe the contents of the paper in some more detail. In \S\ref{preliminaries}, we will briefly recall some basic definitions and results from differential algebra, the theory of linear differential algebraic groups, Picard-Vessiot theory, and parameterized Picard-Vessiot theory. In this section, we will also define most of the notation and concepts which will be used later in the paper. In \S\ref{order1}, we will describe three auxiliary algorithms which will form the building blocks of our algorithms to compute the $\mathrm{PPV}$ group (Definition~\ref{pgal-def}). Each algorithm in this section is first followed by a concrete example of its implementation, and then by a proof that it gives the right answer. In \S\ref{order2}, we will describe algorithms to compute the $\mathrm{PPV}$ group of \eqref{original}. We will first apply Algorithms~\ref{rational} and~\ref{exponential} of \S\ref{order1} to compute the $\mathrm{PPV}$ group of an associated second order \eqref{eq}, which is a differential algebraic subgroup of $\mathrm{SL}_2$. Then, we will indicate how to recover the differential algebraic subgroup of $\mathrm{GL}_2$ corresponding to the original \eqref{original}, as an application of Algorithm~\ref{comparison} from \S\ref{order1}. In \S\ref{conclusion}, we state some consequences of the algorithms and comment on improvements which will be relevant for practical applications.

 %-------------------------------------------------------------------

\section{Preliminaries}\label{preliminaries}

In this section, we will briefly recall without proof some standard definitions and results. We will also set the notation which we shall use for the rest of the paper. The algebraic closure of a field $K$ will be denoted by $\bar{K}$. References will be given in each section.
\vspace{-.075in}
\begin{center}\emph{Every field considered in this work will be of characteristic zero,} \\ \emph{and} $C$ \emph{will always denote an algebraically closed field.}\end{center}

\subsection{Differential algebra}\label{diffalg}

We will briefly recall some definitions from differential algebra. We refer to \cite{kaplansky:1976, kolchin:1976, vanderput-singer:2003} for what follows.

A \emph{differential field} is a pair $(K,\Delta)$, where $K$ is a field, and $\Delta$ is a finite set of pairwise commuting derivations. We abbreviate this by saying that $K$ is a $\Delta$-\emph{field}. If $\Delta=\{\delta\}$ is a singleton, we write $\delta$ instead of $\{\delta\}$, e.g. we say $K$ is a $\delta$-field. We denote the $n$-fold composition of $\delta$ with itself by $\delta^n$, with the convention that $\delta^0:=\mathrm{id}_K$. 

We say that a field automorphism $\sigma:K\rightarrow K$ is a $\Delta$\emph{-automorphism} if $\sigma\circ\delta=\delta\circ\sigma$ for every $\delta\in \Delta$. For any subset $\Pi\subseteq\Delta$, we denote by $K^\Pi$ the subset of elements $c\in K$ such that $\delta c=0$ for all $\delta\in \Pi$. One can show that $K^\Pi$ is a field, and we call it the \emph{subfield of} $\Pi$\emph{-constants} of $K$.

If $L$ is a $\Delta$-field, and $K\subseteq L$ is a subfield such that $\delta(K)\subset K$ for each $\delta\in \Delta$, we say that $L$ is a $\Delta$\emph{-field extension} of $K$, and that $K$ is a $\Delta$\emph{-subfield} of $L$. If $y_1,\dots ,y_m\in L$ are such \hypertarget{diff-gens}{that}\begin{equation*} \left\{ \delta^ny_i \ \middle| \  1\leq i \leq m, \ \delta\in \Delta,\ n\in\mathbb{Z}_{\geq 0}\right\}\end{equation*} is a set of generators for $L$ as an overfield of $K$, we write $L=K\langle y_1,\dots ,y_m\rangle_\Delta$. In this case, we also say that $L$ is \emph{differentially generated} by $\{y_1,\dots ,y_m\}$ over $K$, and that this is a set of \emph{differential generators} for $L$ over $K$.

Now let $K$ be a $\delta$-field. The \emph{ring of differential operators over} $K$ is denoted by $K[\delta]$ and consists of elements of the form $\sum_{n=0}^m r_n\delta^n$ with $r_n\in K$. The multiplication in this ring is given by composition, and is determined by the rule $\delta\circ f= f\circ\delta + \delta (f)$ for $f\in K$. The integer $m$ is the \emph{order} of the differential operator $\mathcal{D}:=\sum_{n=0}^mr_n\delta^n$, and we denote it by $\mathrm{ord}(\mathcal{D})$. We say $\mathcal{D}$ is \emph{monic} if $r_m=1$.

The \emph{ring of differential polynomials} over $K$ (in $m$ \emph{differential indeterminates}) is \begin{equation*}\smash{  K\left\{Y_1,\dots,Y_m\right\}_\delta:=K[Y_i^{(j)}]},\end{equation*} the free (commutative) $K$-algebra on the infinite set of variables $\{Y_i^{(j)}\ | \  1\leq i \leq m, j\in\mathbb{Z}_{\geq 0}\}$. We give it a $\delta$-ring structure by setting $\delta Y_i^{(j)}=Y_i^{(j+1)}$. We often omit the zero superscript, and write $Y_i=Y_i^{(0)}$. The ring $K\{Y_1,\dots , Y_m\}_\delta$ is a free object in the category of $\delta$-$K$-algebras. In particular, any ordered $m$-tuple $(y_1,\dots y_m)$ of elements of $K$ defines a unique ${\delta\text{-homomorphism}}$ $K\{ Y_1,\dots , Y_m\}_\delta\rightarrow K$, determined by $Y_i^{(j)}\mapsto \delta^jy_i$. We say that $K$ is \emph{differentially closed} if every consistent system of differential polynomial equations with coefficients in $K$ has a solution in $K$.

%----------------------------------------------------------------------------

\subsection{Linear Differential Algebraic Groups}

The theory of differential algebraic groups was initiated by Cassidy in \cite{cassidy:1972}, and we refer to that paper for more information and complete proofs of the statements given here (see also \cite{kolchin:1984}). For the rest of this section, we assume that $K $ is a differentially closed $\partial$-field, with subfield of constants $K ^\partial=C$.

\begin{defn}\label{kolchin-top}
Let $V\subseteq K ^m$. We say $V$ is \emph{Kolchin-closed} if there are differential polynomials $p_1\dots p_r$ in $K \{ Y_1,\dots , Y_m\}_\partial$ such that \vspace{-.125in} \begin{equation*}V=\{ v\in K ^m\  |\  p_i(v)=0, \  \forall \  1\leq i \leq r\}.\end{equation*}
\end{defn}

\begin{defn}[(Cassidy {\cite[II, \S5]{cassidy:1972}})]\label{ldga-def}
A \emph{linear differential algebraic group} is a Kolchin-closed subgroup of $GL_n(K )\subset K ^{n\times n}.$
\end{defn}

\begin{thm}[(Cassidy {\cite[Prop. 11]{cassidy:1972}})]\label{dag1}
Let $B$ be a proper differential algebraic subgroup of $\mathbb{G}_a(K )$. Then, there exists a unique nonzero monic operator $ \mathcal{L}\in K [\partial]$ such that \begin{equation*}\mathbb{G}_a(K ;\mathcal{L}):=\{b\in \mathbb{G}_a(K )\  | \  \mathcal{L}b=0\}=B.\end{equation*}
\end{thm}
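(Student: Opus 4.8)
The plan is to translate the one-variable additive structure of $\mathbb{G}_a(K)=(K,+)$ into statements about the left ideal of linear differential operators annihilating $B$, exploiting that $K$ is $\partial$-closed (so that the differential Nullstellensatz applies, the constant field $C=K^\partial$ is algebraically closed, and a monic operator of order $n$ has an $n$-dimensional solution space over $C$). Write $I(B)\subseteq K\{Y\}_\partial$ for the radical differential ideal of all differential polynomials vanishing on $B$, so $B=V(I(B))$. The trivial case $B=\{0\}$ is dispatched by $\mathcal{L}=1$, so I assume $B\neq\{0\}$; then $B$ is infinite, since it contains $\mathbb{Z}b$ for some $b\neq 0$.

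First I would show that $I(B)$ contains a nonzero homogeneous linear differential polynomial, equivalently that the left ideal $J:=\{\mathcal{L}\in K[\partial]\mid \mathcal{L}(b)=0\text{ for all }b\in B\}$ is nonzero. Choose a nonzero $p\in I(B)$ of least total degree $d$ in the finitely many derivatives $Y,Y',\dots,Y^{(k)}$ occurring in it. For each $a\in B$ the shift $p(Y+a)$ again lies in $I(B)$, since $B$ is closed under addition, so the difference $\Delta_a p:=p(Y+a)-p(Y)\in I(B)$ has degree at most $d-1$ in $Y$. If $\Delta_a p\neq 0$ for some $a\in B$, then induction on $d$ reduces us to the case $d=1$, where $p$ is affine and $p(0)=0$ (because $0\in B$) makes $p$ homogeneous linear. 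Otherwise $\Delta_a p=0$ for every $a\in B$, so in particular the degree-$d$ homogeneous part $p_d$ of $p$ satisfies $\sum_j \tfrac{\partial p_d}{\partial Y^{(j)}}\,a^{(j)}=0$ identically in $Y$ for each $a\in B$; since $p_d\neq 0$ and $\operatorname{char}K=0$, extracting the coefficient of a single monomial in $Y$ yields a nonzero tuple $(w_0,\dots,w_k)\in K^{k+1}$ with $\sum_{j}w_j\,b^{(j)}=0$ for all $b\in B$, that is, a nonzero element of $J$. As $K[\partial]$ is a left principal ideal domain, $J=K[\partial]\mathcal{L}$ for a unique monic $\mathcal{L}$, of order $n$ say.

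The conceptual heart is to show that $B$ is in fact a $C$-subspace of $K$. Fix $b\in B$; since $c\in C$ satisfies $\partial^j(cb)=c\,b^{(j)}$, for any $p\in I(B)$ the expression $p(cb,cb',\dots)$ is a polynomial $P_p(c)\in K[c]$. Because $\mathbb{Z}b\subseteq B$, every integer is a root of $P_p$, so $P_p$ has infinitely many roots and hence vanishes identically; thus $cb\in B$ for all $c\in C$, proving $Cb\subseteq B$. Consequently $B\subseteq S:=\{y\in K\mid \mathcal{L}(y)=0\}=\mathbb{G}_a(K;\mathcal{L})$ is an inclusion of $C$-subspaces, and $\dim_C S=n$ as $K$ is $\partial$-closed. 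If $\dim_C B=m<n$, then choosing a $C$-basis $y_1,\dots,y_m$ of $B$ and forming the monic Wronskian operator $\mathrm{Wr}(y_1,\dots,y_m,Y)/\mathrm{Wr}(y_1,\dots,y_m)$, whose solution space is exactly $\operatorname{span}_C\{y_1,\dots,y_m\}=B$, produces an element of $J$ of order $m<n$, contradicting the minimality of $\mathcal{L}$. Hence $\dim_C B=n$ and $B=S=\mathbb{G}_a(K;\mathcal{L})$.

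For uniqueness, if $\mathbb{G}_a(K;\mathcal{L}_1)=\mathbb{G}_a(K;\mathcal{L}_2)=B$ with both operators monic, then both have solution space $B$ of dimension $n$, so both have order $n$, and their difference is an operator of order $<n$ vanishing on the $n$-dimensional $C$-space $B$, which forces $\mathcal{L}_1=\mathcal{L}_2$. I expect the main obstacle to be the degree-reduction step producing a nonzero linear operator in $I(B)$: one must argue cleanly that either the additive differences $\Delta_a p$ eventually lower the degree, or else the failure of this itself yields a linear annihilator. The $C$-subspace step, by contrast, is short once one observes that membership of $cb$ in $B$ is a polynomial condition on $c$ with infinitely many known integer solutions.
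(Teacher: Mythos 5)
Your proof is correct, but be aware that the paper itself contains no proof of this statement: Theorem~\ref{dag1} is quoted as background, and the surrounding text explicitly defers all proofs in that subsection to Cassidy's article \cite{cassidy:1972} (Proposition~11 there). So the only meaningful comparison is with the classical argument, and yours is essentially that argument, correctly assembled: (i) you produce a nonzero linear differential polynomial in $I(B)$ by exploiting the additive group structure --- translation by $a\in B$ preserves $I(B)$ and lowers degree, and when every difference $\Delta_a p$ vanishes, Euler's identity in characteristic zero extracts a nonzero linear annihilator from the top homogeneous part of $p$; (ii) the left-Euclidean, hence left principal ideal, structure of $K[\partial]$ gives the monic generator $\mathcal{L}$ of the annihilating ideal $J$; (iii) the polynomial-in-$c$ argument on $\mathbb{Z}b\subseteq B$ shows $B$ is a $C$-subspace; (iv) the Wronskian/dimension count closes the gap between $B$ and $\mathbb{G}_a(K;\mathcal{L})$, and this is the one step where differential closedness of $K$ is genuinely used, via $\dim_C\mathbb{G}_a(K;\mathcal{M})=\mathrm{ord}(\mathcal{M})$ for monic $\mathcal{M}$. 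Two minor points you should clean up. First, the clause ``induction on $d$'' is inconsistent with having fixed $p$ of minimal degree: since $I(B)$ contains no nonzero element of $K$, a nonzero $\Delta_a p$ of degree at most $d-1$ already contradicts minimality when $d\geq 2$, so that branch is vacuous; either drop the minimality assumption and run a genuine induction, or keep minimality and note that the dichotomy collapses to the Euler-identity case. Second, your uniqueness step tacitly uses that a nonzero operator of order $r<n$ cannot vanish identically on an $n$-dimensional $C$-subspace of $K$; this Wronskian fact (solution spaces have $C$-dimension at most the order, in any differential field with constants $C$) is standard but should be stated, since it is doing the same work as the dimension count in (iv).
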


\begin{thm}[(Cassidy {\cite[Prop. 31 and its Corollary]{cassidy:1972}})]\label{dag2}Let $A$ be a proper differential algebraic subgroup of $\mathbb{G}_m(K )$. Then, either $A=\nobreak\mu_n\subset C^\times$, the group of $n^{\text{th}}$ roots of unity, for some $n\in\mathbb{N}$, or else there exists a unique nonzero monic operator $ \mathcal{L}\in K [\partial]$ such that \begin{equation*}\mathbb{G}_m(K ;\mathcal{L}):=\smash{\left\{a\in\mathbb{G}_m(K )\  \middle|\  \mathcal{L}\left(\tfrac{\partial a}{a}\right)=0\right\}}=A.\end{equation*}

\end{thm}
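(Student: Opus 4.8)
The plan is to reduce everything to Theorem~\ref{dag1} by means of the logarithmic derivative. First I would introduce the map $\ell\partial\colon\mathbb{G}_m(K)\to\mathbb{G}_a(K)$ defined by $\ell\partial(a)=\partial a/a$. The Leibniz rule gives $\ell\partial(ab)=\ell\partial(a)+\ell\partial(b)$, so $\ell\partial$ is a homomorphism of differential algebraic groups, defined by a differential-rational expression. Its kernel is exactly $\{a : \partial a=0\}=C^\times$, and it is surjective because $K$ is differentially closed: for any $b\in K$ the equation $\partial a=ba$ has a nonzero solution $a\in K$. Hence $\ell\partial$ realizes $\mathbb{G}_a(K)$ as the quotient $\mathbb{G}_m(K)/C^\times$ in the category of differential algebraic groups.

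Next I would treat the subgroups that contain $C^\times$. If $A\supseteq C^\times$, then $A=\ell\partial^{-1}(B)$ with $B:=\ell\partial(A)$, and by the quotient correspondence $B$ is a Kolchin-closed subgroup of $\mathbb{G}_a(K)$ that is proper precisely when $A$ is proper. Applying Theorem~\ref{dag1} to the proper subgroup $B$ produces a unique nonzero monic $\mathcal{L}\in K[\partial]$ with $B=\{b : \mathcal{L}b=0\}$, and then
\[
A=\ell\partial^{-1}(B)=\left\{a\in\mathbb{G}_m(K) : \mathcal{L}\!\left(\tfrac{\partial a}{a}\right)=0\right\}=\mathbb{G}_m(K;\mathcal{L}),
\]
with $\mathcal{L}$ unique because $B$ is determined by $A$ and $\mathcal{L}$ by $B$. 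The extreme case $A=C^\times$ corresponds to $B=\{0\}$ and $\mathcal{L}=1$. This yields the ``or else'' alternative.

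It then remains to show that a proper $A$ with $A\not\supseteq C^\times$ is finite, since a finite subgroup of $K^\times$ is cyclic and consists of roots of unity, which are $\partial$-constants ($\zeta^n=1\Rightarrow\partial\zeta=0$) and hence lie in $C^\times$, forcing $A=\mu_n$. Here $A\cap C^\times$ is an algebraic subgroup of $\mathbb{G}_m(C)$ other than $C^\times$, hence a finite $\mu_n$. Suppose $A$ is infinite. Passing to $A^{[n]}=\{a^n:a\in A\}$ (still infinite, and Kolchin-closed as a homomorphic image) makes the intersection with $C^\times$ trivial, so $\ell\partial$ restricts to an isomorphism of differential algebraic groups $A^{[n]}\xrightarrow{\ \sim\ }B':=\ell\partial(A^{[n]})\subseteq\mathbb{G}_a(K)$ onto an infinite group; its inverse is a nontrivial differential-algebraic homomorphism $B'\to\mathbb{G}_m(K)$ splitting $\ell\partial$.

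I expect this last point to be the main obstacle. One must rule out such a splitting, which amounts to Cassidy's fact that there are no nontrivial differential-algebraic homomorphisms from a nonzero differential algebraic subgroup of $\mathbb{G}_a$ into $\mathbb{G}_m$. Concretely, a section would be a differential-rational map $b\mapsto s(b)$ satisfying $\partial s(b)/s(b)=b$ and $s(b_1+b_2)=s(b_1)s(b_2)$, that is, a differentially algebraic ``exponential'', and the heart of the matter---where I would expect to do genuine work, either by analyzing the leading terms of the defining differential polynomials or by invoking the structure of $\mathrm{Hom}(\mathbb{G}_a,\mathbb{G}_m)$ in \cite{cassidy:1972}---is to prove that no such map exists. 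Granting this, $A^{[n]}$ and hence $A$ is finite, which completes the dichotomy.
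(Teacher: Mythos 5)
First, a point of comparison: the paper contains no proof of Theorem~\ref{dag2} at all; it is quoted verbatim from Cassidy \cite{cassidy:1972}. So your proposal can only be judged on its own merits, and on those merits the first half is sound: the logarithmic derivative $a\mapsto\partial a/a$ is a differential algebraic homomorphism $\mathbb{G}_m(K)\to\mathbb{G}_a(K)$, surjective because $K$ is differentially closed, with kernel $C^\times$; for a proper subgroup $A\supseteq C^\times$ the preimage description $A=\ell\partial^{-1}(B)$ together with Theorem~\ref{dag1} gives both existence and uniqueness of $\mathcal{L}$ (you do use silently that the image of a differential algebraic homomorphism is Kolchin closed, and later that a bijective differential algebraic homomorphism has a differential algebraic inverse---both are facts from Cassidy's theory that should be cited). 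The reduction of the remaining case to the finiteness of $A$, via $A\cap C^\times=\mu_n$ and the passage to $A^{[n]}$, is also correct.

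The genuine gap is exactly where you flag it: the assertion that a nonzero differential algebraic subgroup of $\mathbb{G}_a(K)$ admits no nontrivial differential algebraic homomorphism into $\mathbb{G}_m(K)$. This is not a finishing touch---it is essentially the whole content of Cassidy's Proposition 31 (the half you completed is the formal half), so ``granting this'' leaves the theorem unproved, and ``analyzing leading terms'' is a hope, not an argument. The gap is fillable with tools already on the table, and you should fill it: by Theorem~\ref{dag1}, your group $B'=\ell\partial(A^{[n]})$ is either $\ker\mathcal{M}$ for some nonzero monic $\mathcal{M}\in K[\partial]$ or all of $\mathbb{G}_a(K)$. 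In the first case $B'$ is a $C$-vector space of finite dimension $\ell=\mathrm{ord}(\mathcal{M})$; writing $b=\sum_{i}c_ib_i$ in a $C$-basis, every differential rational function on $B'$ becomes an ordinary rational function of the constants $(c_1,\dots,c_\ell)$, so the hypothetical section restricted to $B'$ is a rational homomorphism of algebraic groups $\mathbb{G}_a^\ell\to\mathbb{G}_m$ over a field of characteristic zero (using that $C^\ell$ is Zariski dense in $K^\ell$), and all such homomorphisms are trivial. In the second case, apply this to the subgroups $\ker\partial^m\subset B'$: their union is Kolchin dense in $\mathbb{G}_a(K)$, since a proper subgroup $\ker\mathcal{M}$ cannot contain $\ker\partial^{\mathrm{ord}(\mathcal{M})+1}$ for dimension reasons, so a homomorphism vanishing on all of them is trivial. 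Either conclusion contradicts the section being injective on the infinite group $B'$, which forces $A^{[n]}$, hence $A$, to be finite, completing your dichotomy. Without some such paragraph, the proposal is a correct reduction of Cassidy's theorem to Cassidy's theorem, not a proof of it.
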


\begin{thm}[(Cassidy {\cite[Prop. 42]{cassidy:1972}})]\label{sl2}
Let $H$ be a Zariski-dense differential algebraic subgroup of $\mathrm{SL}_2(K )$. Then, either $H=\mathrm{SL}_2(K )$, or else $H$ is conjugate to $\mathrm{SL}_2(C)$.
\end{thm}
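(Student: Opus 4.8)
The plan is to reduce the problem to the classifications of differential algebraic subgroups of $\mathbb{G}_a$ and $\mathbb{G}_m$ (Theorems~\ref{dag1} and~\ref{dag2}) by intersecting $H$ with the root subgroups and a maximal torus of $\mathrm{SL}_2$. Write $u_+(a)=\begin{pmatrix}1&a\\0&1\end{pmatrix}$, $u_-(b)=\begin{pmatrix}1&0\\b&1\end{pmatrix}$, and $t(s)=\begin{pmatrix}s&0\\0&s^{-1}\end{pmatrix}$ for the two opposite unipotent subgroups $U_\pm\cong\mathbb{G}_a$ and the diagonal torus $T\cong\mathbb{G}_m$. Since $U_+$ and $U_-$ generate $\mathrm{SL}_2$, the strategy is to pin down the differential algebraic groups $H\cap U_\pm$ and $H\cap T$ and then reassemble $H$ from them.

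First I would establish that $H$ meets each root subgroup nontrivially, so that $H\cap U_\pm$ is an infinite, hence Zariski-dense, differential algebraic subgroup of $U_\pm$. By Theorem~\ref{dag1} each such intersection is either all of $U_\pm$ or the solution space $\ker\mathcal{L}_\pm$ of a nonzero monic operator $\mathcal{L}_\pm\in K[\partial]$; in the latter case it is a finite-dimensional $C$-subspace of $K$ whose dimension equals $\mathrm{ord}(\mathcal{L}_\pm)\geq 1$. If some $H\cap U_\pm$ is the whole of $U_\pm$, then, using Zariski-density to produce a $g\in H$ with $gU_\pm g^{-1}\neq U_\pm$ (possible because the normalizer of $U_\pm$ is a proper Borel, while $H$ is dense), I would conclude that $H$ contains two distinct one-parameter unipotent subgroups, and therefore $H=\mathrm{SL}_2(K)$.

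In the remaining case, every $H\cap U_\pm=\ker\mathcal{L}_\pm$ is a proper, nonzero, finite-dimensional $C$-subspace $V_\pm\subseteq K$. The key mechanism is the scaling action of the torus: conjugation by $t(s)$ sends $u_+(a)$ to $u_+(s^2a)$, so if $t(s)\in H$ then $s^2V_+\subseteq V_+$. Because $K^\partial=C$ is algebraically closed, a non-constant $s$ forces $s^2$ to be transcendental over $C$; but then the vectors $v,\,s^2v,\,s^4v,\dots$ all lie in the finite-dimensional space $V_+$ and hence satisfy a nontrivial $C$-linear relation, which upon dividing by $v\neq 0$ yields a polynomial relation for $s^2$ over $C$, a contradiction. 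Hence $H\cap T\subseteq T(C)$: every torus element of $H$ is constant. From here I would pass to the conjugate situation in which the invariant data are defined over $C$, combining the constancy of the torus with the Chevalley commutator relations (which express torus and opposite-unipotent elements as products of the $u_\pm$) to force $V_\pm$ to be a single $C$-line after conjugation, so that $H$ is conjugate to $\mathrm{SL}_2(C)$.

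The main obstacle I anticipate is the very first reduction: Zariski-density of $H$ does not by itself guarantee that $H$ meets a given root subgroup nontrivially, since the unipotent variety $\{\mathrm{tr}=2\}$ has positive codimension and a dense set need not meet it. Establishing the existence of nontrivial unipotent (equivalently, root-subgroup) elements of $H$ is where the differential-algebraic structure must really be used, via the adjoint action of $H$ on $\mathrm{Lie}(\mathrm{SL}_2)$ together with the simplicity of this Lie algebra; controlling the final conjugation in the proper case is a secondary, more bookkeeping-heavy difficulty.
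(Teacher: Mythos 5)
The paper itself gives no proof of Theorem~\ref{sl2}: it is quoted as background directly from Cassidy \cite[Prop.~42]{cassidy:1972}, so your argument has to be judged against the statement alone, and its very first reduction is not merely unproven but false. Keeping your notation $U_\pm$, $u_\pm$, $t(s)$, a Zariski-dense differential algebraic subgroup of $\mathrm{SL}_2(K)$ need not meet the standard root subgroups at all, and a counterexample is furnished by one of the two groups in the conclusion of the theorem. Take any $a\in K\setminus C$ (such $a$ exists since $K$ is differentially closed) and set
\begin{equation*}
g^{-1}:=\begin{pmatrix} 1 & a^{-1} \\ a & 2 \end{pmatrix}\in\mathrm{SL}_2(K),
\qquad
H:=g\,\mathrm{SL}_2(C)\,g^{-1}.
\end{equation*}
Then $H$ is Kolchin-closed and Zariski-dense in $\mathrm{SL}_2(K)$ (because $\mathrm{SL}_2(C)$ is), but every nontrivial unipotent element of $H$ fixes a unique line of $K^2$, and these lines are exactly the images under $g$ of the lines defined over $C$; since $g^{-1}$ sends the lines spanned by $e_1$ and $e_2$ to the lines spanned by $(1,a)$ and $(1,2a)$, neither of which is defined over $C$, we get $H\cap U_+=H\cap U_-=\{1\}$. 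In particular your parenthetical ``(equivalently, root-subgroup elements)'' conflates two different statements: $H$ does always contain nontrivial unipotent elements (true, but itself requiring proof), yet it need not meet the \emph{standard} root subgroups. The statement one can aim for is that $H$ may be \emph{conjugated} so that $H\cap U_+\neq\{1\}$; and even then the opposite intersection can stay trivial --- conjugating $\mathrm{SL}_2(C)$ by $u_+(a)$ with $a\notin C$ gives $H\cap U_+=u_+(C)$ but $H\cap U_-=\{1\}$ --- so a second conjugation (moving the fixed line of a second unipotent of $H$ onto the span of $e_2$ while keeping the first at $e_1$) is required before your ``reassembly'' can even begin. The fix you propose, via the adjoint action and simplicity of the Lie algebra of $\mathrm{SL}_2$, is aimed at the wrong statement and cannot repair this.

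The second genuine gap is the endgame, where the tool you invoke does not exist. Your torus-scaling step is correct and is the best part of the proposal: if $t(s)\in H$ and $H\cap U_+$ is a nonzero finite-dimensional $C$-subspace, then multiplication by $s^2$ is a $C$-linear endomorphism of it, Cayley--Hamilton makes $s^2$ algebraic over $C$, and algebraic-over-constants elements of $K$ are constants, so $H\cap T\subseteq T(C)$. But $\mathrm{SL}_2$ has rank one: there are \emph{no} Chevalley commutator relations confining $[U_+,U_-]$, since products of elements of opposite root subgroups generate all of $\mathrm{SL}_2$; no such relation can ``force $V_\pm$ to be a single $C$-line.'' And even granting that $V_+$ and $V_-$ are $C$-lines that can be simultaneously rescaled to $C$, so that $H\supseteq\mathrm{SL}_2(C)$, you must still prove $H=\mathrm{SL}_2(C)$: by the Bruhat decomposition an arbitrary element of $H$ has the form $u_+(\alpha)t(s)$ or $u_+(\alpha)t(s)\,w\,u_+(\beta)$ with $\alpha,\beta\in K$ and $s\in K^\times$ a priori arbitrary, and one must use the subgroups of $H$ already identified to force $\alpha,\beta,s$ into $C$. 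Proving that $V_\pm$ are one-dimensional and carrying out this last verification is where essentially the whole content of Cassidy's theorem lies, and the proposal is silent on both. (What is sound, besides the torus step: the dimension count via Theorem~\ref{dag1}, and the disposal of the case $H\cap U_+=U_+$, since two distinct conjugates of a full root group indeed generate $\mathrm{SL}_2(K)$.)
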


%---------------------------------------------------------------------------------------

\subsection{Picard-Vessiot theory}

We refer to \cite[Ch. 1]{vanderput-singer:2003} for what follows. Let $K$ be a $\delta$-field such that $K^\delta$ is algebraically closed and let $\mathcal{D}$ be a differential operator over $K$ of order $m$.

\begin{defn}\label{pv-def}
A \emph{Picard-Vessiot extension} (or $\mathrm{PV}$\emph{-extension}) for the differential equation $\mathcal{D}Y=0$ is a $\delta$-field extension of $K$, which we denote by $\mathrm{PV}(\mathcal{D}/K)$, such that
\begin{enumerate}
\item There are $m$ distinct, $K^\delta$-linearly independent elements $y_1,\dots ,y_m\in \mathrm{PV}(\mathcal{D}/K)$ such that $\mathcal{D}y_i=0$ for all $1\leq i\leq m$.

\item $\mathrm{PV}(\mathcal{D}/K)=K\langle y_1,\dots ,y_m\rangle_\delta$.

\item $\mathrm{PV}(\mathcal{D}/K)^\delta=K^\delta$.
\end{enumerate}

\end{defn}

A $\mathrm{PV}$ extension for the operator $\mathcal{D}$ exists and is unique up to (non-unique) differential isomorphism, thus justifying the notation $\mathrm{PV}(\mathcal{D}/K)$.

\begin{defn}\label{gal-def}
The \emph{Picard-Vessiot group of} $\mathcal{D}$ \emph{over} $K$ (or $\mathrm{PV}$\emph{-group}) is the set of $\delta$-field automorphisms of $\mathrm{PV}(\mathcal{D})$ which leave $K$ point-wise fixed, and is denoted by $\mathrm{Gal}_\delta(\mathcal{D}/K)$.

The \emph{solution space} of $\mathcal{D}$ is the $K^\delta$-vector subspace of $\mathrm{PV}(\mathcal{D}/K)$ consisting of all $y\in\mathrm{PV}(\mathcal{D}/K)$ such that $\mathcal{D}y=0$, and we denote it by $\mathrm{Sol}(\mathcal{D})$.

\end{defn}

The $K^\delta$ vector space $\mathrm{Sol}(\mathcal{D})$ is $m$-dimensional, and it is stable under the action of $\mathrm{Gal}_\delta(\mathcal{D}/K)$. This action defines an injective homomorphism $\mathrm{Gal}_\delta(\mathcal{D}/K)\hookrightarrow\mathrm{GL}\smash{\bigl(\mathrm{Sol}(\mathcal{D})\bigr)}$, and one can show that its image is a linear algebraic group. There is a Galois correspondence between Zariski-closed subgroups of $\mathrm{Gal}_\delta(\mathcal{D}/K)$ and intermediate differential field extensions between $K$ and $\mathrm{PV}(\mathcal{D}/K)$ (cf. Theorem~\ref{pgal}).

%---------------------------------------------------------------------------------------

\subsection{Parameterized Picard-Vessiot theory}\label{ppvtheory}

We are now ready to state the main results of the parameterized Picard-Vessiot theory. We will follow the presentation in \cite{cassidy-singer:2006}. Let $K$ be a $\Delta:=\{\delta,\partial\}$-field, such that $K_0:=K^\delta$ is a differentially closed $\partial$-field, and let $\mathcal{D}\in K[\delta]$ be a differential operator over $K$, of order $m$.

\begin{defn}\label{ppv-def} A \emph{parameterized Picard-Vessiot extension} (or $\mathrm{PPV}$\emph{extension}) for the differential equation $\mathcal{D}Y=0$ is a $\Delta$-field extension of $K$, which we denote $\mathrm{PPV}(\mathcal{D}/K)$, such that
\begin{enumerate}

\item There are $m$ distinct, $K_0$-linearly independent elements $y_1,\dots ,y_m\in\mathrm{PPV}(\mathcal{D}/K)$ such that $\mathcal{D}y_i=0$ for all $1\leq i \leq m$.

\item $\mathrm{PPV}(\mathcal{D}/K)=K\langle y_1,\dots , y_m\rangle_\Delta$.

\item $\mathrm{PPV}(\mathcal{D}/K)^{\delta}=K_0$.

\end{enumerate}

\end{defn}
 
A $\mathrm{PPV}$ extension for the operator $\mathcal{D}$ exists and is unique up to (non-unique) differential isomorphism, thus justifying the notation $\mathrm{PPV}(\mathcal{D}/K)$.
 
 \begin{defn}\label{pgal-def}
 
  The \emph{parameterized Picard-Vessiot group of} $\mathcal{D}$ \emph{over} $K$ (or $\mathrm{PPV}$ \emph{group}) is the set of $\Delta$-field automorphisms of $\mathrm{PPV}(\mathcal{D}/K)$ which leave $K$ point-wise fixed, and is denoted by $\mathrm{Gal}_\Delta(\mathcal{D}/K)$.
 
 The \emph{solution space} of $\mathcal{D}$ is the $K_0$-vector subspace of $\mathrm{PPV}(\mathcal{D}/K)$ consisting of all $y\in \mathrm{PPV}(\mathcal{D}/K)$ such that $\mathcal{D}y=0$, and we denote it by $\mathrm{Sol}(\mathcal{D})$.
 
 \end{defn}
 
One can show that $\mathrm{Sol}(\mathcal{D})$ is $m$-dimensional as a $K_0$-vector space, and the action of $\pgal(\mathcal{D}/K)$ on $\mathrm{Sol}(\mathcal{D})$ identifies $\pgal(\mathcal{D}/K)$ with a subgroup of $\mathrm{GL}\smash{\bigl(\mathrm{Sol}(\mathcal{D})\bigr)}$. It is shown in \cite{cassidy-singer:2006} that the image of $\pgal (\mathcal{D}/K)$ under this monomorphism is a linear differential algebraic group.
  
 \begin{thm}[(Parameterized Galois correspondence {\cite[Thm. 3.5]{cassidy-singer:2006}})]\label{pgal}
Let $\mathcal{D}\in K[\delta]$. Let $P$ be the set of $\Delta$-field extensions $L$ of $K$ contained in $\mathrm{PPV}(\mathcal{D}/K)$, and let $S$ be the set of Kolchin-closed subgroups $H$ of $\pgal(\mathcal{D}/K)$. 

Consider the maps $\psi:S\rightarrow P$ and $\varphi:P\rightarrow S$ defined by $\psi(H) :=\{a\in\mathrm{PPV}(\mathcal{D}/K)\  |\  \sigma a =a \  \forall \sigma\in H\}$ and $\varphi(L)  :=\pgal(\mathcal{D}/L)$. Then:
 
\begin{enumerate}

\item $\varphi$ defines an inclusion-reversing bijection $P\leftrightarrow S$, with inverse given by $\psi$.

\item $L\in P$ is a $\mathrm{PPV}$ extension of $K$ (for some operator $\mathcal{D}'$) if and only if $\varphi(L)$ is a normal subgroup of $\pgal(\mathcal{D}/K)$. In this case, $\sigma(L)=L$ for all $\sigma\in\pgal(\mathcal{D}/K)$, and the restriction homomorphism $\sigma\mapsto\sigma|_L$ is a surjection $\mathrm{Gal}_\Delta (\mathcal{D}/K) \twoheadrightarrow \mathrm{Gal}_\Delta (\mathcal{D}'/K)$. 

\end{enumerate}
 \end{thm}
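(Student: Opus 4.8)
The plan is to transpose Kolchin's proof of the classical differential Galois correspondence to the differential-algebraic setting, replacing Zariski-closed subgroups by Kolchin-closed ones and the coordinate ring of an algebraic group by the $\partial$-differential coordinate ring of $\pgal(\mathcal{D}/K)$. The essential structural input I would establish first (importing it from the construction of $\mathrm{PPV}$ extensions in \cite{cassidy-singer:2006}) is that the $\mathrm{PPV}$ ring $R\subseteq\mathrm{PPV}(\mathcal{D}/K)$ --- the $\Delta$-$K$-subalgebra generated by the entries of a fundamental solution matrix and the inverse of its Wronskian --- is $\delta$-simple, and carries a torsor structure: there is a $\Delta$-$K_0$-algebra isomorphism
\begin{equation*}
R\otimes_K R\;\cong\;R\otimes_{K_0}K_0\{\pgal(\mathcal{D}/K)\},
\end{equation*}
where $K_0\{\pgal(\mathcal{D}/K)\}$ is the ring of $\partial$-differential-polynomial functions on the group. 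This identity encodes the action of the group on $R$ and is the parameterized analogue of the statement that $\mathrm{Spec}(R)$ is a $\pgal(\mathcal{D}/K)$-torsor.

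With this in hand, I would check that $\mathrm{PPV}(\mathcal{D}/K)$ is again a $\mathrm{PPV}$ extension of any intermediate $\Delta$-field $L\in P$ for the \emph{same} operator $\mathcal{D}$: conditions (1)--(3) of Definition~\ref{ppv-def} are inherited, the only point needing care being $\mathrm{PPV}(\mathcal{D}/K)^\delta=L^\delta$, which holds because $K_0=K^\delta\subseteq K\subseteq L\subseteq\mathrm{PPV}(\mathcal{D}/K)$ forces $K_0\subseteq L^\delta\subseteq\mathrm{PPV}(\mathcal{D}/K)^\delta=K_0$. Thus $\mathrm{PPV}(\mathcal{D}/K)$ has $\mathrm{PPV}$ ring $R_L=L\cdot R$ over $L$, and $\varphi(L)=\pgal(\mathcal{D}/L)$ is precisely the set of $\sigma\in\pgal(\mathcal{D}/K)$ fixing $L$ pointwise. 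That this is Kolchin-closed follows by expressing, for each $a\in L$, the condition $\sigma a=a$ as the vanishing of a $\partial$-differential polynomial in the matrix entries of $\sigma$, and invoking the Ritt--Raudenbush basis theorem, so that the a priori infinite intersection of these conditions over a set of $\Delta$-generators of $L$ is cut out by finitely many of them. The two inclusions $H\subseteq\varphi\psi(H)$ and $L\subseteq\psi\varphi(L)$ are immediate from the definitions.

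The core of part (1) is the two reverse inclusions. For $\psi\varphi(L)\subseteq L$ I would argue that if $a\in\mathrm{PPV}(\mathcal{D}/K)$ is fixed by every $\sigma\in\pgal(\mathcal{D}/L)$, then, reading the torsor identity over $L$, the elements $a\otimes 1$ and $1\otimes a$ of $R_L\otimes_L R_L$ agree; the $\delta$-simplicity of $R_L$ together with the differential closedness of $K_0$ --- which guarantees that the $K_0$-points of $\pgal(\mathcal{D}/L)$ separate elements not in $L$ --- then forces $a\in L$. For $\varphi\psi(H)\subseteq H$, writing $L=\psi(H)$, I would show that a \emph{proper} Kolchin-closed subgroup $H\subsetneq\pgal(\mathcal{D}/L)$ cannot share the fixed field of $\pgal(\mathcal{D}/L)$: the torsor structure produces an element of $R_L$ that is $H$-invariant but moved by some element of $\pgal(\mathcal{D}/L)$, using again that over a differentially closed $K_0$ a Kolchin-closed subgroup is determined by its defining radical $\partial$-differential ideal, hence by its invariants. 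I expect this determination of a Kolchin-closed group by its fixed field to be \textbf{the main obstacle}, since it is exactly here that the differential-algebraic geometry --- the Kolchin topology, radical differential ideals, and the differential closedness of $K_0$ as a substitute for having ``enough points'' --- must be deployed with care, and where the parameterized argument genuinely departs from its Zariski counterpart.

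For part (2), suppose first that $N:=\varphi(L)$ is normal in $G:=\pgal(\mathcal{D}/K)$. For $\sigma\in G$ one has $\sigma N\sigma^{-1}=N$, so $\sigma$ carries $L=\psi(N)$ onto $\psi(\sigma N\sigma^{-1})=\psi(N)=L$, giving $\sigma(L)=L$; the restriction map $\sigma\mapsto\sigma|_L$ then has kernel $N$ and realizes $G/N$ as a group of $\Delta$-automorphisms of $L/K$. To see that $L$ is itself a $\mathrm{PPV}$ extension, I would produce an operator $\mathcal{D}'$ whose solution space spans, over $K_0$, a finite-dimensional $G$-stable subspace that $\Delta$-generates $L$; its existence is the differential-algebraic counterpart of Chevalley's theorem, and the restriction homomorphism then identifies with the claimed surjection $\pgal(\mathcal{D}/K)\twoheadrightarrow\pgal(\mathcal{D}'/K)$, surjectivity coming from the fact that any $\Delta$-automorphism of the sub-$\mathrm{PPV}$ extension $L$ lifts to $\mathrm{PPV}(\mathcal{D}/K)$ by uniqueness of $\mathrm{PPV}$ extensions. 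Conversely, if $L=\mathrm{PPV}(\mathcal{D}'/K)$ then its $K_0$-solution space is $G$-stable, so restriction gives a homomorphism $G\to\pgal(\mathcal{D}'/K)$ whose kernel is exactly $\pgal(\mathcal{D}/L)=\varphi(L)$, which is therefore normal.
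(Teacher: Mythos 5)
This statement is not proved in the paper at all: it sits in the Preliminaries, which the author explicitly says are ``recalled without proof,'' and it is attributed verbatim to Cassidy and Singer \cite[Thm.~3.5]{cassidy-singer:2006}. So there is no internal proof to compare yours against; the relevant benchmark is the proof in the cited source, and your outline follows essentially that route. The $\delta$-simplicity and torsor structure of the $\mathrm{PPV}$ ring, the inheritance of the $\mathrm{PPV}$ property over intermediate $\Delta$-fields, the Kolchin-closedness of fixed-point groups, and the Chevalley-type construction of $\mathcal{D}'$ in part (2) are exactly the ingredients of the Cassidy--Singer argument, which itself transposes the classical Picard--Vessiot correspondence and uses the differential closedness of $K_0$ to supply enough $K_0$-points. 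Two places where your sketch is thinner than what a complete proof requires: (i) an element $a\in\mathrm{PPV}(\mathcal{D}/K)$ fixed by $\pgal(\mathcal{D}/L)$ lives in the fraction field of $R_L$, not in $R_L$ itself, so the equalizer argument $a\otimes 1=1\otimes a\Rightarrow a\in L$ must be preceded by a denominator-ideal (or ``$a$ as a quotient of invariant sections'') argument before the torsor identity can be applied; (ii) the claim that a proper Kolchin-closed subgroup of $\pgal(\mathcal{D}/L)$ has strictly larger fixed field, and the differential-algebraic Chevalley step producing $\mathcal{D}'$, both rest on Kolchin's and Cassidy's theory of quotients and representations of linear differential algebraic groups, which you invoke but do not supply. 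These are genuine pieces of machinery rather than routine verifications, but they are available in the literature and are deployed in the same way by the source, so your plan is sound and matches the proof the paper is implicitly relying on.
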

 
\begin{rem}The $\delta$-subfield $K\langle y_1,\dots , y_m\rangle_{\delta}\subseteq \mathrm{PPV}(\mathcal{D}/K)$ defines a $\mathrm{PV}$ extension of $(K,\delta)$ for $\mathcal{D}$ (where $\{y_1,\dots ,y_m\}$ is any fixed $K_0$-basis of $\mathrm{Sol}(\mathcal{D})$ in $\mathrm{PPV}(\mathcal{D}/K)$). The $\delta$-field $\mathrm{PV}(\mathcal{D}/K)$ is stabilized by $\pgal(\mathcal{D}/K)$, and the resulting restriction homomorphism is in fact an injection \begin{equation*}\sigma\mapsto\sigma|_{\mathrm{PV}(\mathcal{D}/K)}: \pgal(\mathcal{D}/K)\hookrightarrow\mathrm{Gal}_\delta(\mathcal{D}/K).  \end{equation*}\end{rem}

\begin{thm}[(Cassidy-Singer {\cite[Prop. 3.6 (2)]{cassidy-singer:2006}})]\label{zariski} For $\mathcal{D}\in K[\delta_x]$, $\pgal (\mathcal{D}/K)$ is Zariski-dense in $\gal (\mathcal{D}/K)$.
 
 \end{thm}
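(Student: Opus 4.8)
The plan is to exploit the two Galois correspondences together with the general principle that the fixed field of a group of automorphisms is unchanged upon passing to its Zariski closure. Write $G:=\gal(\mathcal{D}/K)$ and $H:=\pgal(\mathcal{D}/K)$, and fix a $K_0$-basis $y_1,\dots,y_m$ of $\mathrm{Sol}(\mathcal{D})$ inside $\mathrm{PPV}(\mathcal{D}/K)$, so that $\mathrm{PV}(\mathcal{D}/K)=K\langle y_1,\dots,y_m\rangle_\delta\subseteq\mathrm{PPV}(\mathcal{D}/K)$. By the Remark following Theorem~\ref{pgal}, restriction to $\mathrm{PV}(\mathcal{D}/K)$ embeds $H\hookrightarrow G$, and both groups sit inside $\mathrm{GL}\bigl(\mathrm{Sol}(\mathcal{D})\bigr)\cong\mathrm{GL}_m$, with $G$ Zariski-closed and $H$ Kolchin-closed. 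Let $\bar H$ denote the Zariski closure of (the image of) $H$ in $G$; the goal is precisely to prove $\bar H=G$.

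First I would reduce the equality $\bar H=G$ to a statement about fixed fields. Since $\bar H$ is a Zariski-closed subgroup of $G$, the non-parameterized Galois correspondence for the $\mathrm{PV}$ extension $\mathrm{PV}(\mathcal{D}/K)/K$ provides an inclusion-reversing bijection between Zariski-closed subgroups of $G$ and intermediate $\delta$-fields, under which $G$ corresponds to $K$. Hence it suffices to show that the fixed field $\mathrm{PV}(\mathcal{D}/K)^{\bar H}$ equals $K$, for then $\bar H$ and $G$ correspond to the same intermediate field and must coincide by injectivity of the correspondence.

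Next I would compute this fixed field in two steps. The first step is the key lemma: for any subgroup $S\subseteq G$ one has $\mathrm{PV}(\mathcal{D}/K)^{\bar S}=\mathrm{PV}(\mathcal{D}/K)^{S}$, so in particular $\mathrm{PV}(\mathcal{D}/K)^{\bar H}=\mathrm{PV}(\mathcal{D}/K)^{H}$. This holds because $\mathrm{PV}(\mathcal{D}/K)$ is a union of finite-dimensional $K_0$-subspaces that are stable under the rational action of $G$; for each element $a$, the condition $g\cdot a=a$ cuts out a Zariski-closed subgroup (a stabilizer) of $G$, so any $a$ fixed by all of $S$ is automatically fixed by $\bar S$, while the reverse inclusion is trivial. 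The second step identifies $\mathrm{PV}(\mathcal{D}/K)^{H}$ with $K$: since $H=\pgal(\mathcal{D}/K)$ stabilizes $\mathrm{PV}(\mathcal{D}/K)$ and acts on it by restriction, and since the parameterized correspondence of Theorem~\ref{pgal} gives $\mathrm{PPV}(\mathcal{D}/K)^{H}=\psi(H)=K$ (as $H=\varphi(K)$ corresponds to $K$), we obtain
\begin{equation*}
\mathrm{PV}(\mathcal{D}/K)^{H}=\mathrm{PV}(\mathcal{D}/K)\cap\mathrm{PPV}(\mathcal{D}/K)^{H}=\mathrm{PV}(\mathcal{D}/K)\cap K=K.
\end{equation*}
Combining the two steps yields $\mathrm{PV}(\mathcal{D}/K)^{\bar H}=K$, and the reduction above then forces $\bar H=G$, which is the assertion.

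The hard part will be justifying the key lemma rigorously—namely that the $G$-action on $\mathrm{PV}(\mathcal{D}/K)$ is locally finite and rational, so that the stabilizer of an individual element is a genuine Zariski-closed subgroup of $G$ rather than merely a Kolchin-closed or set-theoretic one. This is exactly the structural input from non-parameterized Picard–Vessiot theory that legitimizes the passage from the fixed field of $H$ to that of $\bar H$; everything else is a formal consequence of the two Galois correspondences together with the inclusion $\mathrm{PV}(\mathcal{D}/K)\subseteq\mathrm{PPV}(\mathcal{D}/K)$.
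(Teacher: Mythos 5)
This statement is imported by the paper from Cassidy--Singer \cite[Prop.~3.6(2)]{cassidy-singer:2006} without an internal proof, so the only comparison available is with the cited source's argument, and your proposal is correct and essentially coincides with it. The three ingredients you isolate are exactly the standard ones: $\mathrm{PPV}(\mathcal{D}/K)^{H}=K$ by the parameterized correspondence (Theorem~\ref{pgal}), hence $\mathrm{PV}(\mathcal{D}/K)^{H}=K$; fixed fields are insensitive to Zariski closure because the $\mathrm{Gal}_{\delta_x}$-action on the $\mathrm{PV}$ field is locally finite and rational, so stabilizers of elements are Zariski-closed (this is the standard density lemma of non-parameterized Picard--Vessiot theory, cf. \cite[Ch.~1]{vanderput-singer:2003}); and injectivity of the classical Galois correspondence then forces $\bar{H}=\gal(\mathcal{D}/K)$.
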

 
 %------------------------------------------------------------------

 \section{Algorithms for first order equations}\label{order1}
 
In this section, we give three auxiliary algorithms which will be applied in the next section to compute the $\mathrm{PPV}$ group of a second order equation. The main idea for Algorithm~\ref{rational} is well-known (see \cite[Example 7.1]{cassidy-singer:2006}); we have merely modified it in order to avoid factoring polynomials into irreducible factors. Algorithm~\ref{rational} also helps illustrate Algorithms~\ref{exponential} and~\ref{comparison}. Algorithm~\ref{exponential} solves an analogous problem to that solved by Algorithm~\ref{rational}, but in a slightly different context. The main idea for Algorithm~\ref{exponential} comes from an argument given by Singer in the course of the proof of \cite[Prop. 4.1 (4)]{singer:2011}. Algorithm~\ref{comparison} computes the $\mathrm{PPV}$ group corresponding to the intersection of the $\mathrm{PPV}$ fields for two first order inhomogeneous equations.

\begin{setting}\hypertarget{setting}{}
For the rest of this paper we shall use the following notation: \begin{enumerate}
\item $K_0$ is a differentially closed $\partial_t$-field extension of $\smash{\bigl(C(t),\partial_t\bigr)}$, where $\partial_t$ denotes the derivation $\tfrac{d}{d t}$ on $C(t)$. 
\item $K:=K_0(x)$, endowed with the structure of $\Delta:=\{\delta_x,\partial_t\}$-field determined by setting $\delta_xx=1$, $\partial_tx=0$, and $\delta_x a=0$ for all $a\in K_0$.
\item We consider the elements of $K$ as rational functions in $x$ with coefficients in $K_0$, and we denote the \emph{degree} of a polynomial in $x$ and \emph{greatest common divisor} of polynomials in $x$ by $\mathrm{deg}_x$ and $\mathrm{gcd}_x$, respectively. We adopt the convention that $\mathrm{deg}_x(0)=0$.
\end{enumerate}
\end{setting}

Consider the $\Delta$-field extension $F$ of $K$ of given by $F:=K(\eta)$, where $\delta_x\eta:=p\eta$, $\partial_t\eta:=q\eta$, and $p,q\in \overline{C(t)}(x)$ satisfy the \emph{integrability condition} $\delta_xq=\partial_tp$. We do not exclude the possibility that $\eta\in K$ or, equivalently, that $F=K$. We wish to compute the $\mathrm{PPV}$ group corresponding to the first order (inhomogeneous) differential equation $\delta_xY=\eta$ over $F$. 

We claim this is a differential algebraic subgroup of $\mathbb{G}_a(K_0)$. To see this, consider the associated homogeneous equation \cite[Ex. 1.18]{vanderput-singer:2003}: let $\mathcal{P}:=\delta_x^2-p\delta_x$. One can show that, if $\{1,\theta\}$ is a $K_0$-basis of $\mathrm{Sol}(\mathcal{P})$, then $\mathrm{PPV}(\mathcal{P}/F)=F\langle\theta\rangle_\Delta$. We may assume without loss of generality that $\delta_x\theta=\eta$, because $\delta_x(\delta_x\theta)=p\delta_x\theta$ and $\delta_x\theta\neq 0$ together imply that $\delta_x\theta =a\eta$ for some $a\in K_0^\times$. It follows from this that $\delta_x(\sigma \theta - \theta)=0$ for all $\sigma\in\pgal(\mathcal{P}/F)$. The map $\sigma\mapsto b_\sigma:=(\sigma\theta-\theta)$ defines an isomorphism of $\pgal(\mathcal{P}/F)$ onto a differential algebraic subgroup of $\mathbb{G}_a(K_0)$. From now on, we will identify $\pgal(\mathcal{P}/F)$ with its image in $\mathbb{G}_a(K_0)$ via this isomorphism.

Simply put, each algorithm will use the input data to construct a system of homogeneous linear equations, and then construct the output from a solution for the system which satisfies a certain minimality condition. The construction of these systems of equations will rely on the following Lemma.

\begin{lem}[(Reduction Lemma)]\label{singer1}

Let $p,q\in K$, $\eta\in F$, and $\mathcal{P}\in K[\delta_x]$ be defined as in the preceding discussion, and let $0\neq\mathcal{L}\in K_0[\partial_t]$. The following are equivalent:

\begin{enumerate}

\item \label{gcond} $\pgal(\mathcal{P}/F)\subseteq\mathbb{G}_a(K_0;\mathcal{L}).$ 

\item \label{rcond}$\mathcal{L}(\eta)=\delta_xf$ for some $f\in F$.

\item \label{tcond} $\mathcal{L}(\eta)=(\delta_xh +ph)\eta$ for some $h\in K$.

\end{enumerate}

\end{lem}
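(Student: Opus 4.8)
The plan is to prove $(1)\Leftrightarrow(2)$ using the parameterized Galois correspondence together with the fact that $F$ introduces no new $\delta_x$-constants, and then to prove $(2)\Leftrightarrow(3)$ by analyzing the structure of the hyperexponential extension $F=K(\eta)$. I would begin by recording three identities. Since every $\sigma\in\pgal(\mathcal{P}/F)$ fixes $F$, and in particular $K_0\subseteq F$, pointwise and commutes with $\partial_t$, the operator $\mathcal{L}\in K_0[\partial_t]$ commutes with $\sigma$; hence for $b_\sigma=\sigma\theta-\theta$ we have $\mathcal{L}(b_\sigma)=\sigma(\mathcal{L}\theta)-\mathcal{L}\theta$. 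Because $\delta_x$ and $\partial_t$ commute and the coefficients of $\mathcal{L}$ lie in $K_0=K^{\delta_x}$, the operators $\delta_x$ and $\mathcal{L}$ commute on $\mathrm{PPV}(\mathcal{P}/F)$, so $\delta_x(\mathcal{L}\theta)=\mathcal{L}(\delta_x\theta)=\mathcal{L}(\eta)$. Finally, since $\eta$ solves $\delta_xY=pY$ with $p\in K$, the field $F=K\langle\eta\rangle_{\delta_x}$ is a $\mathrm{PV}$ extension of $(K,\delta_x)$ and so introduces no new $\delta_x$-constants: $F^{\delta_x}=K^{\delta_x}=K_0$, and therefore $\mathrm{PPV}(\mathcal{P}/F)^{\delta_x}=K_0$ as well.

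For $(1)\Leftrightarrow(2)$, I would combine the first identity with the description of $\mathbb{G}_a(K_0;\mathcal{L})$ in Theorem~\ref{dag1}. Condition \eqref{gcond} asserts $\mathcal{L}(b_\sigma)=0$ for every $\sigma$, which by the first identity is equivalent to $\sigma(\mathcal{L}\theta)=\mathcal{L}\theta$ for all $\sigma\in\pgal(\mathcal{P}/F)$. By the parameterized Galois correspondence (Theorem~\ref{pgal}), the fixed field of the full group $\pgal(\mathcal{P}/F)$ is the base field $F$, so this is in turn equivalent to $\mathcal{L}\theta\in F$. Using $\delta_x(\mathcal{L}\theta)=\mathcal{L}(\eta)$, the choice $f:=\mathcal{L}\theta$ shows $\mathcal{L}\theta\in F$ gives \eqref{rcond}; conversely, if $\mathcal{L}(\eta)=\delta_xf$ with $f\in F$, then $\delta_x(\mathcal{L}\theta-f)=0$ forces $\mathcal{L}\theta-f\in\mathrm{PPV}(\mathcal{P}/F)^{\delta_x}=K_0\subseteq F$, whence $\mathcal{L}\theta\in F$.

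The implication $(3)\Rightarrow(2)$ is immediate, since $\delta_x(h\eta)=(\delta_xh+ph)\eta$ exhibits $f:=h\eta\in F$ as a witness for \eqref{rcond}. The substance lies in $(2)\Rightarrow(3)$, which I expect to be the main obstacle. From $\partial_t\eta=q\eta$ one gets $\partial_t^n\eta=Q_n\eta$ for suitable $Q_n\in K$, so that $\mathcal{L}(\eta)=G\eta$ for an explicit $G\in K$; condition \eqref{tcond} then amounts to solvability of $G=\delta_xh+ph$ in $K$, whereas \eqref{rcond} asserts only that $G\eta\in\delta_xF$. To bridge the gap I would analyze an arbitrary $f\in F$ with $\delta_xf=G\eta$. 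When $\eta$ is transcendental over $K$, a partial-fraction argument shows $f$ can have no pole along any monic irreducible $\pi\neq\eta$ of $K[\eta]$: since such $\pi$ is non-special, $\gcd(\pi,\delta_x\pi)=1$ forces the pole order at $\pi$ to increase strictly under $\delta_x$, contradicting $\delta_xf=G\eta\in K[\eta]$. Hence $f=\sum_ja_j\eta^j$ is a Laurent polynomial, and comparing coefficients in $\delta_xf=G\eta$ yields $\delta_xa_j+jpa_j=0$ for $j\neq1$, so each $a_j\eta^j$ is a $\delta_x$-constant, hence lies in $K_0$; as $\eta^j\notin K$ for $j\neq0$ this forces $a_j=0$ for $j\neq0,1$, leaving $f=a_0+a_1\eta$ with $\delta_xa_1+pa_1=G$, so $h:=a_1$ witnesses \eqref{tcond}.

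It remains to treat the case where $\eta$ is algebraic over $K$, i.e.\ $\eta^n\in K$ for $n$ minimal, in which the relevant $\mathrm{PV}$ group is $\mu_n$ as in Theorem~\ref{dag2}. Here $F=K[\eta]$ has $K$-basis $1,\eta,\dots,\eta^{n-1}$, so I would write $f=\sum_{j=0}^{n-1}a_j\eta^j$ and run the same coefficient comparison; minimality of $n$ guarantees $\eta^j\notin K$ for $0<j<n$, which again kills all terms except $j=0,1$ and produces $h:=a_1$ with $G=\delta_xh+ph$. The delicate point throughout is controlling the new constants via $F^{\delta_x}=K_0$ and eliminating the off-diagonal powers of $\eta$; the pole-order (non-specialness) analysis in the transcendental case is the technical crux, and I would isolate it as the only place where the Liouvillian structure of $F$ is genuinely used.
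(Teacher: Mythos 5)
Your handling of \eqref{gcond}$\,\Leftrightarrow\,$\eqref{rcond} and of \eqref{tcond}$\,\Rightarrow\,$\eqref{rcond} is essentially identical to the paper's: the same identities $\mathcal{L}(b_\sigma)=\sigma(\mathcal{L}\theta)-\mathcal{L}\theta$ and $\delta_x(\mathcal{L}\theta)=\mathcal{L}(\eta)$, the same appeal to the Galois correspondence, and the same use of $\mathrm{PPV}(\mathcal{P}/F)^{\delta_x}=K_0$ to absorb the integration constant. Where you genuinely differ is in \eqref{rcond}$\,\Rightarrow\,$\eqref{tcond} for $\eta$ transcendental over $K$. The paper expands $f$ in the partial-fraction basis $\mathcal{B}$ of $\bar{K}(\eta)$ over $\bar{K}$, observes that $\eta^{-1}\mathcal{B}$ is again a basis, and identifies $\delta_xf/\eta$ with the coefficient $\delta_xa_1+pa_1$ of the basis element $1$; this is a purely linear-algebraic computation that never invokes the absence of new $\delta_x$-constants. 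You instead argue in Risch--Bronstein style: every monic irreducible $\pi\neq\eta$ of $K[\eta]$ is non-special, so a pole of $f$ at $\pi$ would force a strictly deeper pole of $\delta_xf$ at $\pi$, contradicting $\delta_xf=\mathcal{L}(\eta)=G\eta\in K[\eta]$ with $G=\mathcal{L}(\eta)/\eta\in K$; hence $f\in K[\eta,\eta^{-1}]$, and coefficient comparison finishes. This is correct, and it avoids the paper's bookkeeping with the algebraic poles $e_k\in\bar{K}$ and their $\delta_xe_k$ terms; but note what it costs: non-specialness of every $\pi\neq\eta$ is not free (if $\pi\mid\delta_x\pi$, then comparing leading and constant coefficients shows $\eta^{m}/c_0$ is a new $\delta_x$-constant, where $m$ is the degree of $\pi$ and $c_0$ its constant term), and it genuinely fails without $F^{\delta_x}=K_0$ (take $p=0$). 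The same hypothesis underlies your Kummer normalization $\eta^n\in K$ in the algebraic case, which the paper's basis-shift argument for an arbitrary algebraic $\eta$ of degree $m$ does not need.

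Two repairable defects. First, your justification of $F^{\delta_x}=K_0$ is circular: being differentially generated by a solution of $\delta_xY=pY$ does not make $F$ a $\mathrm{PV}$ extension of $(K,\delta_x)$ --- having no new constants is part of the \emph{definition} of a $\mathrm{PV}$ extension, not a consequence of being generated by a solution. The legitimate source is the standing hypothesis of the section (implicit in the paper as well): $F\subseteq\mathrm{PPV}(\mathcal{P}/F)$ and the latter has $\delta_x$-constants $K_0$, which is what makes $\sigma\mapsto b_\sigma$ land in $\mathbb{G}_a(K_0)$ in the first place. Second, your algebraic case silently assumes $n\geq 2$: when $\eta\in K$ --- a case the lemma explicitly includes, and the one on which Algorithm~\ref{rational} and Proposition~\ref{alg1} rest --- the basis is $\{1\}$, there is no coefficient $a_1$, and your comparison produces no $h$; the paper dispatches this case separately in one line by taking $h:=f/\eta$, and your proof needs the same (trivial) patch.
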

 
 \begin{proof}
 
Assume that $\pgal(\mathcal{P}/F)\subseteq\mathbb{G}_a(K_0;\mathcal{L}),$ or, equivalently, that $\mathcal{L}(b_\sigma)=0$ for all $\sigma\in\pgal(\mathcal{P}/F)$. Since \begin{equation*}\sigma\left(\mathcal{L}(\theta)\right)=\mathcal{L}(\theta)+\mathcal{L}(b_\sigma)=\mathcal{L}(\theta),\end{equation*} we have that $\mathcal{L}(\theta)\in F$ by the Galois correspondence. Since $\delta_x\mathcal{L}(\theta)=\mathcal{L}(\eta)$, setting $f:=\mathcal{L}(\theta)$ gives the implication \eqref{gcond} $\Rightarrow$ \eqref{rcond}. Conversely, assume that there exists $ f\in F$ such that $\delta_xf=\mathcal{L}(\eta)$. Then $\delta_x(\mathcal{L}(\theta)-f)=0$, so $\mathcal{L}(\theta)=f+a$ for some $a\in K_0$. This implies that $\mathcal{L}(b_\sigma)=0$ for all $\sigma$, and we have shown that \eqref{rcond} $\Rightarrow$ \eqref{gcond}.

To see that \eqref{tcond} $\Rightarrow$ \eqref{rcond}, let $h\in K$ be as in part \eqref{tcond} of the Lemma, and set $f:=h\eta$.

Now we will show that \eqref{rcond} $\Rightarrow$ \eqref{tcond}. Since $\frac{\partial_t\eta}{\eta}\in K$, we have that $\smash{\frac{\partial_t^n\eta}{\eta}}\in K$ for any $n$, so $\smash{\frac{\mathcal{L}(\eta)}{\eta}}\in K$. If $\eta\in K$, setting $h:=\smash{\frac{f}{\eta}}$ establishes our claim. If $\eta\notin K$, we have two cases to consider: either $\eta$ is algebraic over $K$ or $\eta$ is transcendental over $K$.

If $\eta$ is algebraic over $K$ of degree $m$, then $\{1,\eta,\dots , \eta^{m-1}\}$ is a basis for $F$ as a $K$-vector space, so we may write $f=\sum_{i=0}^{m-1}a_i\eta^i$ with $a_i\in K$ in a unique way. Then $\delta_xf=\sum(\delta_xa_i+ipa_i)\eta^i$ and, since $\frac{\mathcal{L}(\eta)}{\eta}=\frac{\delta_xf}{\eta}\in K$, we have that \vspace{-.12in}\begin{equation*} \frac{\delta_xf}{\eta}=\sum_{i=0}^{m-1}(\delta_xa_i+ipa_i)\eta^{i-1}=\delta_xa_1+pa_1, \end{equation*} because the set $\{\eta^{-1},1,\dots , \eta^{m-2}\}$ is also a basis for $F$ as a $K$-vector space. Therefore, $\delta_xf=\delta_x(a_1\eta)$, and setting $h:=a_1$ establishes our claim in this case.

Now assume that $\eta$ is transcendental over $K$. Then $f$ has a unique partial fraction decomposition:\begin{equation*} \smash{f=\sum_i a_i\eta^i+\sum_{j,k}\frac{c_{j,k}}{(\eta-e_k)^j}},\end{equation*} where $a_i\in K$ and $c_{j,k},e_k\in \bar{K}$, because the set \begin{equation*} \smash{\mathcal{B}:=\{\eta^i\ |\ i\geq 0\}\cup\bigcup_{e\in \bar{K}}\left\{\tfrac{1}{(\eta-e)^j}\ \middle|\ j\geq 1\right\}}\vphantom{\sum_i}\end{equation*} is a basis for $\bar{K}(\eta)$ as a $\bar{K}$-vector space. Since the image of $\mathcal{B}$ under the invertible $\bar{K}$-linear map given by multiplication by $\eta^{-1}$ is still a basis, and $\frac{\mathcal{L}(\eta)}{\eta}=\frac{\delta_xf}{\eta}\in K$, we see that \begin{align*} \frac{\delta_xf}{\eta} & = \sum_i (\delta_xa_i +ipa_i)\eta^{i-1}+\sum_{j,k} \frac{\delta_xc_{j,k}(\eta-e_k)-jc_{j,k}(p\eta-\delta_xe_k)}{\eta(\eta-e_k)^{j+1}} \\ & =  \sum_i (\delta_xa_i +ipa_i)\eta^{i-1}+\sum_{j,k} \frac{\delta_xc_{k,j}-jpc_{j,k}}{\eta(\eta-e_k)^j}+\frac{jc_{j,k}(\delta_xe_k-pe_k)}{\eta(\eta-e_k)^{j+1}}  \\ &=\delta_xa_1+pa_1. \end{align*}Therefore, $\delta_xf=\delta_x(a_1\eta)$, and setting $h:=a_1$ establishes the Lemma. \end{proof}
 
 \begin{rem}
The equivalence of \eqref{gcond} and \eqref{rcond} in the Lemma is well-known (see \cite{singer:2011, dreyfus:2011}). The fact that \eqref{tcond} $\Rightarrow$ \eqref{rcond} was used by Singer in \cite{singer:2011}. We do not know of a proof of the implication \eqref{rcond} $\Rightarrow$ \eqref{tcond} in the literature.
\end{rem}

The following two algorithms compute the operator $\mathcal{L}\in K_0[\partial_t]$ such that $\pgal(\mathcal{P}/F)\simeq\mathbb{G}_a(K_0;\mathcal{L})$. Since $\mathbb{G}_a(K_0;\mathcal{L})=\mathrm{Sol}(\mathcal{L})$, its dimension as a $C$-vector space is equal to $\mathrm{ord}(\mathcal{L})$, and $\mathcal{L}$ is the operator of smallest order such that the equivalent conditions of Lemma~\ref{singer1} are satisfied. Algorithm~\ref{rational} applies only when $\eta\in K$, while Algorithm~\ref{exponential} applies only when $\eta\notin K$.

\subsection{First case: $\eta\in K$} By part \eqref{rcond} of Lemma~\ref{singer1}, the operator ${\mathcal{L}\in K_0[\partial_t]}$ such that $\pgal(\mathcal{P}/K)\simeq\mathbb{G}_a(K_0;\mathcal{L})$ is the operator of smallest order such that $\mathcal{L}\eta=\delta_xf$, for some $f\in K=F$. We will write $\mathcal{L}$ and $f$ with undetermined coefficients, and use this relation to obtain a system of linear equations in the coefficients of $\mathcal{L}$ and~$f$. 

%----------------   ALGORITHM 1

\begin{algo}[(Primitive of a rational)]\label{rational}$\hphantom{X}$
\begin{center}
\begin{boxedminipage}{11.8cm}\textbf{Input:} $\eta\in \overline{C(t)}(x)$.

\hypertarget{output1}{\textbf{Output:}} $\mathcal{L}^\eta:=\sum_{i=0}^\ell a_i\partial_t^i\in\overline{C(t)}[\partial_t]$  such  that  $\pgal(\mathcal{P}/K)\simeq\mathbb{G}_a(K_0;\mathcal{L}^\eta)$.\end{boxedminipage}
\end{center}

\begin{asparaenum}[\bfseries {Step} 1:] 

\item \label{step1.1}Write \vspace{-.125in}\begin{equation*} \eta=\eta_0+\frac{\eta_1}{\eta_2}, \end{equation*} with $\eta_i\in\smash{\overline{C(t)}}[x]$ for $0\leq i \leq 2$, such that \begin{equation*} \mathrm{gcd}_x(\eta_1,\eta_2)=1\quad\text{and}\quad \mathrm{deg}_x(\eta_1)<\mathrm{deg}_x(\eta_2).\end{equation*} \vspace{-.1in}

\item \label{step1.2}Let \vspace{-.125in}\begin{equation*} d:=\frac{\eta_2}{\mathrm{gcd}_x(\eta_2,\delta_x\eta_2)}, \end{equation*} and let $n$ be the smallest integer such that $d^n\eta$ is a polynomial in $x$. We remark that $d$ is the product of the irreducible factors of $\eta_2$, and $n$ is the highest multiplicity of an irreducible factor of $\eta_2$.

\item \label{step1.3}Let $M:=\mathrm{deg}_x(d)$ and $s:=\mathrm{deg}_x(\eta_0)$. For each $0\leq N\leq M$, write \begin{equation*} \mathcal{L}_N  :=\sum_{i=0}^N\alpha_i\partial_t^i \qquad\text{and}\qquad f_N  :=\sum_{j=0}^{s+1}\beta_jx^j+\sum_{k=1}^{(n+N-1)}\left(\frac{\sum_{l=0}^{M-1}\xi_{k,l}x^l}{d^k}\right),\end{equation*} where the $\alpha_i$, $\beta_j$, and $\xi_{k,l}$ are undetermined coefficients.

\item \label{step1.4}Treating each $\beta_j$ and $\xi_{k,l}$ as a $\delta_x$-constant, set \begin{equation*}\tag{$\mathbf{H}_N$} \label{nrational} \delta_xf_N=\mathcal{L}_N(\eta), \end{equation*} and then multiply each side of this equation by $d^{n+N}$. The result will be an equality of polynomials in $x$, whose coefficients are homogeneous linear forms in the $\alpha_i$, $\beta_j$, and $\xi_{k,l}$. Equating coefficients of like-powers of $x$, we obtain a system of \vspace{-.125in}\begin{equation*} M(n+N)+s \end{equation*} homogeneous linear equations with coefficients in $\overline{C(t)}$, in the \vspace{-.06in}\begin{equation*} (N+1) +(s+2)+M(n+N-1)\vspace{-.12in} \end{equation*}variables $\alpha_i$, $\beta_j$ and $\xi_{k,l}$.

\item \label{step1.5}If $N=M$, the system of linear equations defined by~\eqref{nrational} has a solution with not all $\alpha_i$ being equal to zero. Find the smallest nonnegative integer $\ell\leq M$ such that the system of linear equations defined by~\eqref{nrational}, with $N=\ell$, has a solution with not all $\alpha_i$ being zero.

\item \label{step1.6}Find a solution \begin{align*}  \alpha_i&=a_i \quad &&\text{for}\ 0\leq i \leq \ell, \\ \beta_j&=b_j\quad &&\text{for}\ 0\leq j \leq s+1,\\ \xi_{k,l}&=c_{k,l} \quad &&\text{for}\ 1\leq k \leq n+N-1\ \text{and}\  0 \leq l \leq M-1;\end{align*}for this system of homogeneous linear equations, such that each $a_i,b_j,c_{k,l}\in\overline{C(t)}$ and $a_\ell=1$. This condition determines $a_0,\dots,a_\ell$ uniquely.

\item Set $\mathcal{L}^\eta:=\sum_{i=0}^\ell a_i\partial_t^i$, and go to \hyperlink{output1}{Output}.

\end{asparaenum}
\end{algo}

%-------------------------------

\begin{rem}
Upon inspection, we see that the system of linear equations of Step~\ref{step1.4} has coefficients in the smallest algebraic extension $L$ of $C(t)$ such that $\eta\in L(x)$. Therefore, $\mathcal{L}^\eta\in L[\partial_t]$.
\end{rem}

 \begin{rem}
 In practice, it is often possible to calculate $\mathcal{L}^\eta$ more directly, as follows: let $\{d_1,\dots ,d_M\}$ be the set of poles of $\eta$, and let $n$ be the maximum order of $\eta$ at any of these poles. If \begin{equation*} \eta=\sum_{j=0}^n b_jx^j + \sum_{k=1}^n \sum_{l=1}^{M}\frac{e_{k,l}}{(x-d_l)^k} \end{equation*} is the partial fraction decomposition of $\eta$, it is shown in \cite[Example 7.1]{cassidy-singer:2006} that $\mathbb{G}_a(K_0;\mathcal{L}^\eta)$ is the $C$-span of $\{ e_{1,l}\}_{i=1}^{M}$ in $K_0$. Incidentally, this proves the claim made in Step~\ref{step1.5}, that the system obtained from~\eqref{nrational} with $N=M$ has a solution with not all $\alpha_i$ being 0, because \begin{equation*} M\geq\mathrm{dim}_C\big(\mathbb{G}_a(K_0;\mathcal{L}^\eta)\big)=\mathrm{ord}(\mathcal{L}^\eta). \end{equation*} 
If $\{\tilde{e}_k\}_{k=1}^\ell$ is a basis of $\mathbb{G}_a(K_0;\mathcal{L}^\eta)$ considered as a $C$-vector space, then $\mathcal{L}^\eta$ is given by $\sum_{i=0}^\ell a_i\partial_t^i$, where \begin{equation*}\mathrm{det}\begin{pmatrix} Y^{(0)} & Y^{(1)} & \cdots & Y^{(\ell)} \\ \tilde{e}_1& \partial_t\tilde{e}_1 & \cdots & \partial_t^\ell \tilde{e}_1 \\ \vdots & \vdots &\vdots &\vdots \\  \tilde{e}_\ell &\partial_t \tilde{e}_\ell &\cdots & \partial_t^\ell \tilde{e}_\ell\end{pmatrix}=\sum_{i=0}^\ell a_iY^{(i)}.  \end{equation*} Algorithm~\ref{rational} circumvents the need to factorize the denominator of $\eta$, because performing such a factorization could be computationally infeasible in practice. However, when such a factorization is available, it should be more efficient to carry out this simpler algorithm, instead of Algorithm~\ref{rational}.
 \end{rem}

\begin{eg}\label{example1}We will now apply Algorithm~\ref{rational} with $\eta:=\frac{2t}{x^2+t},$ to compute the $\mathrm{PPV}$ group corresponding to \begin{equation} \label{eg-1-eq1}\partial Y/\partial x= 2t/(x^2+t).\end{equation} In this case, $d=x^2+t$, $n=1$, $M=2$, and $s=0$. We write the operator $\mathcal{L}_N$ and the rational function $f_N$ with undetermined coefficients, as in Step~\ref{step1.3}, with $N=1$:\vspace{-.1in}\begin{equation*} \mathcal{L}_1:=\alpha_1\tfrac{\partial}{\partial t}+\alpha_0 \qquad\text{and}\qquad f_1:=\beta_0+\beta_1x+\frac{\xi_{1,0}+\xi_{1,1}x}{x^2+t}.\end{equation*} Step~\ref{step1.4} then requires us to substitute these expressions in \eqref{nrational}, with $N=1$, to obtain\begin{equation}\label{eg-1-eq2}\frac{\partial f_1}{\partial x} =\beta_1+\frac{\xi_{1,1}}{x^2+t}+\frac{-2\xi_{1,0}x-2\xi_{1,1}x^2}{(x^2+t)^2}  = \frac{2t\alpha_0+2\alpha_1}{x^2+t}+\frac{-2t\alpha_1}{(x^2+t)^2}=\mathcal{L}_1(\eta).\end{equation} After multiplying by $d^2=(x^2+t)^2$ on both sides, we have that~\eqref{eg-1-eq2} holds if and only if the following polynomial in $x$ is zero\vspace{-.125in}\begin{equation*}  (-\beta_1)x^4+\bigl(2t\alpha_0+2\alpha_1-2t\beta_1+\xi_{1,1}\bigr)x^2+(2\xi_{1,0})x+\bigl( 2t^2\alpha_0-t^2\beta_1 -t\xi_{1,1} \bigr). \end{equation*} Thus we obtain a system of linear equations by setting each coefficient equal to zero. We then find the solution $\alpha_0=-\frac{1}{2t}$, $\alpha_1=1$, $\beta_1=\beta_0=\xi_{1,0}=0$, $\xi_{1,1}=-1$, and we thus obtain an operator $\mathcal{L}\in C(t)[\frac{\partial}{\partial t}]$ and a rational function $f\in C(x,t)$, defined as $\mathcal{L}:=\smash{\frac{\partial}{\partial t}-\frac{1}{2t}}$ and $\smash{f:=\frac{-x}{x^2+t}}$, which satisfy condition \eqref{rcond} of Lemma~\ref{singer1}. One can check that there is no solution with $\alpha_1=0$, and therefore the $\mathrm{PPV}$ group of \eqref{eg-1-eq1} is $\mathbb{G}_a(K_0;\frac{\partial}{\partial t}-\frac{1}{2t})$. 
\end{eg}

\begin{prop}[(Algorithm~\ref*{rational} is correct)] \label{alg1}Suppose that $\eta\in K$. Then, the \hyperlink{output1}{Output} of Algorithm~\ref{rational} is the operator which defines $\pgal(\mathcal{P}/K)$ as a subgroup of $\mathbb{G}_a(K_0)$. In other words, $\pgal(\mathcal{P}/K)\simeq\mathbb{G}_a(K_0;\mathcal{L}^\eta)$.
\end{prop}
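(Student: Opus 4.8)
The plan is to use the Reduction Lemma~\ref{singer1} to convert the assertion into a statement purely about the solvability of the linear systems~\eqref{nrational}. Since $\eta\in K$ we have $F=K$, so $\pgal(\mathcal{P}/K)=\pgal(\mathcal{P}/F)$, and by the discussion preceding the algorithm this group equals $\mathbb{G}_a(K_0;\mathcal{L}^\eta)$, where $\mathcal{L}^\eta$ is the nonzero operator of least order $\ell^\ast$, normalized to have leading coefficient $1$, satisfying the equivalent conditions of Lemma~\ref{singer1}; by part~\eqref{rcond}, $\ell^\ast$ is the least order of a nonzero $\mathcal{L}\in K_0[\partial_t]$ for which $\mathcal{L}(\eta)=\delta_x f$ admits a solution $f\in K$, and by Theorem~\ref{dag1} the normalized operator of that order is unique. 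Thus everything reduces to the following \emph{Key Equivalence}: for each $0\le N\le M$, the system~\eqref{nrational} has a solution with not all $\alpha_i$ zero if and only if there is a nonzero $\mathcal{L}\in\overline{C(t)}[\partial_t]$ of order at most $N$ with $\mathcal{L}(\eta)\in\delta_x K$.

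The first ingredient is a pole-order estimate that justifies the shape of~\eqref{nrational}. Writing $\eta_2=\prod_j h_j^{m_j}$ with $h_j\in\overline{C(t)}[x]$ irreducible and monic in $x$, a direct computation gives $\partial_t(g/h^m)=(h\,\partial_t g-mg\,\partial_t h)/h^{m+1}$, and since $\gcd_x(h,\partial_t h)=1$ whenever $\partial_t h\ne 0$ (as $\deg_x\partial_t h<\deg_x h$ and $h$ is irreducible), each application of $\partial_t$ raises the order of a pole by at most one. Hence $\partial_t^i\eta$ has poles only at the roots of $d$, of order at most $n+i$, and its polynomial part has $x$-degree at most $s$; consequently $d^{n+N}\mathcal{L}_N(\eta)$ is a polynomial in $x$, which is what makes the clearing of denominators in Step~\ref{step1.4} legitimate. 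This already yields one direction of the Key Equivalence: a solution of~\eqref{nrational} with some $\alpha_i\ne 0$ produces $\mathcal{L}_N\ne 0$ of order $\le N$ together with $f_N\in\overline{C(t)}(x)\subseteq K$ satisfying $\delta_x f_N=\mathcal{L}_N(\eta)$, i.e. condition~\eqref{rcond}.

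For the converse — which is the crux — suppose $\mathcal{L}\in\overline{C(t)}[\partial_t]$ is nonzero of order $\le N$ with $\delta_x f=\mathcal{L}(\eta)$ for some $f\in K$. Then $\mathcal{L}(\eta)\in\overline{C(t)}(x)$ has a rational primitive, so all of its residues vanish; since the residues are intrinsic and lie in $\overline{C(t)}$, the function $\mathcal{L}(\eta)$ in fact has a primitive $\tilde f\in\overline{C(t)}(x)$ (two primitives differ by an element of $K_0$, so passing to $\tilde f$ loses nothing). By the pole-order estimate, $\mathcal{L}(\eta)$ has poles only at the roots of $d$ of order $\le n+N$ and polynomial part of degree $\le s$; integrating, $\tilde f$ has polynomial part of degree $\le s+1$ and poles only at the roots of $d$ of order $\le n+N-1$. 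Expanding the proper part of $\tilde f$ $d$-adically — using that $\{x^l/d^k : 0\le l\le M-1,\ 1\le k\le n+N-1\}$ is a basis for the space of proper fractions with denominator dividing $d^{\,n+N-1}$ — exhibits $\tilde f$ in exactly the form $f_N$ with coefficients in $\overline{C(t)}$. Reading off the coefficients of $\mathcal{L}$ and $\tilde f$ gives a solution of~\eqref{nrational} with the prescribed nonzero $\alpha_i$, completing the Key Equivalence. I expect this step — controlling the admissible denominators and degrees of the primitive, and descending the coefficients from $K_0$ to $\overline{C(t)}$ — to be the main obstacle.

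Finally I would assemble the conclusion. Because the solvability of a linear system over $\overline{C(t)}$ is unaffected by extending scalars to $K_0$, the minimal order and the normalized minimal operator computed over $\overline{C(t)}$ coincide with those over $K_0$; in particular $\ell^\ast$ is well defined and $\mathcal{L}^\eta\in\overline{C(t)}[\partial_t]$. By the Key Equivalence, the system~\eqref{nrational} has a nonzero-$\alpha$ solution precisely when $\ell^\ast\le N$; since $\ell^\ast=\mathrm{ord}(\mathcal{L}^\eta)\le M$ (as noted in the remarks following the algorithm, $M\ge\dim_C\mathbb{G}_a(K_0;\mathcal{L}^\eta)$), the system at $N=M$ is solvable and the smallest $\ell$ found in Step~\ref{step1.5} equals $\ell^\ast$. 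The normalization $a_\ell=1$ of Step~\ref{step1.6} then produces a nonzero operator of order $\ell^\ast$ with leading coefficient $1$ satisfying~\eqref{rcond}; this operator defines a subgroup of $\mathbb{G}_a(K_0)$ containing $\pgal(\mathcal{P}/K)$ and of the same dimension $\ell^\ast$, hence equal to it, so by the uniqueness in Theorem~\ref{dag1} it is exactly $\mathcal{L}^\eta$. This gives $\pgal(\mathcal{P}/K)\simeq\mathbb{G}_a(K_0;\mathcal{L}^\eta)$, and the uniqueness of $a_0,\dots,a_\ell$ asserted in Step~\ref{step1.6} follows as well.
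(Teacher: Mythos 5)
Your strategy coincides with the paper's: reduce everything to condition~\eqref{rcond} of the Reduction Lemma~\ref{singer1}, translate that condition into solvability of the linear systems~\eqref{nrational} via pole-order estimates and $d$-adic partial fractions, and finish with a dimension count plus the uniqueness in Theorem~\ref{dag1}. Your forward direction, your pole-order estimate (the paper compresses it into ``a calculation shows that $d^{n+\tilde{\ell}}\tilde{\mathcal{L}}(\eta)\in K_0[x]$''), and your final assembly are all in order.

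There is, however, one genuine soft spot, in the minimality step. The group $\pgal(\mathcal{P}/K)$ is cut out, by Theorem~\ref{dag1} and the discussion preceding the algorithms, by the monic operator $\mathcal{L}_0\in K_0[\partial_t]$ of \emph{smallest order} satisfying Lemma~\ref{singer1}\eqref{rcond} --- and a priori $\mathcal{L}_0$ has coefficients in $K_0$, not in $\overline{C(t)}$ (that it actually descends to $\overline{C(t)}$ is part of what is being proved). Your Key Equivalence, as stated and proved, quantifies only over nonzero $\mathcal{L}\in\overline{C(t)}[\partial_t]$, so by itself it cannot rule out a lower-order $K_0$-coefficient operator satisfying~\eqref{rcond}. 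The bridge you offer --- ``solvability of a linear system over $\overline{C(t)}$ is unaffected by extending scalars to $K_0$'' --- is true but does not close this gap: extension-invariance of the solvability of~\eqref{nrational} says nothing about $K_0$-operators satisfying~\eqref{rcond} until one knows that \emph{every} such operator produces a solution of~\eqref{nrational} over $K_0$, which is precisely the $K_0$-coefficient version of your crux and is never stated. The repair is immediate, and is exactly what the paper does: run your crux argument with $\mathcal{L}\in K_0[\partial_t]$ and $f\in K$ from the start. The pole-order estimate and the $d$-adic expansion are insensitive to the coefficient field, and the residue/descent step becomes unnecessary because the primitive is already in $K=K_0(x)$; this yields a $K_0$-solution of~\eqref{nrational} with some $\alpha_i\neq 0$, and then your linear-algebra observation produces an $\overline{C(t)}$-solution, contradicting the minimality of $\ell$ from Step~\ref{step1.5}. (Your residue argument, descending the primitive of an $\overline{C(t)}$-operator to $\overline{C(t)}(x)$, is correct, but it addresses the wrong direction of generality and is not needed once the argument is run over $K_0$.)
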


\begin{proof}The solution found in Step~\ref{step1.6}, for the system of linear equations of Step~\ref{step1.4}, also defines a rational function \begin{equation*} f:= \sum_{j=0}^{s+1}b_jx^j+\sum_{k=1}^{(n+\ell-1)}\left(\frac{\sum_{l=0}^{M-1}c_{k,l}x^l}{d^k}\right).\end{equation*} By construction, $\mathcal{L}^\eta\in K_0[\partial_t]$ and $f\in K$ satisfy part \eqref{rcond} of Lemma~\ref{singer1}. Therefore, $\pgal(\mathcal{P}/K)\subseteq\mathbb{G}_a(K_0;\mathcal{L}^\eta)$.

We need to rule out the possibility that there exist $\smash{\tilde{\mathcal{L}}}\in K_0[\partial_t]$ and $\smash{\tilde{f}}\in K$ satisfying part \eqref{rcond} of Lemma~\ref{singer1} with $\smash{\tilde{\ell}}:=\mathrm{ord}(\smash{\tilde{\mathcal{L}}})<\mathrm{ord}(\mathcal{L}^\eta)=\ell$. We proceed by contradiction: assume that such $\smash{\tilde{\mathcal{L}}}$ and $\smash{\tilde{f}}$ do exist. Write \begin{equation*}\tilde{\mathcal{L}}:=\sum_{i=0}^{\tilde{\ell}}\tilde{a}_i\partial_t^i,\qquad\text{and}\qquad \tilde{f}  :=\sum_{j=0}^{s+1}\tilde{b}_jx^j+\sum_{k=1}^{T}\left(\frac{\sum_{l=0}^{M-1}\tilde{c}_{k,l}x^l}{d^k}\right),\end{equation*} for some $T\in\mathbb{N}$, with $\tilde{a}_i,\tilde{b}_j,\tilde{c}_{k,l}\in K_0$. A calculation shows that $d^{n+\smash{\tilde{\ell}}}\smash{\tilde{\mathcal{L}}}(\eta)\in K_0[x]$. Therefore, $\smash{d^{n+\tilde{\ell}}}\delta_x\smash{\tilde{f}}\in K_0[x]$, and we may take $T=n+\smash{\tilde{\ell}}-1$. But this means that $\alpha_i=\smash{\tilde{a}}_i$, $\beta_j=\smash{\tilde{b}}_j$, $\xi_{k,l}=\smash{\tilde{c}}_{k,l}$ is a solution for the system of equations defined by~\eqref{nrational} with $N=\smash{\tilde{\ell}}$, which contradicts the choice~of~$\ell$. \end{proof}

\subsection{Second case: $\eta\notin K$}  By part \eqref{tcond} of Lemma~\ref{singer1}, the desired $\mathcal{L}\in K_0[\partial_t]$ is the operator of smallest order such that $\mathcal{L}(\eta)=(\delta_xh+ph)\eta$ for some $h\in K$. We will write $\mathcal{L}$ and $h$ with undetermined coefficients, and use this relation to obtain a system of linear equations in the unknown coefficients of $\mathcal{L}$ and~$h$.

%-------------  ALGORITHM 2

\begin{algo}[(Primitive of an exponential)]\label{exponential} $\hphantom{X}$

\begin{center}
\begin{boxedminipage}{12.4cm}\textbf{Input:} $p,q\in \overline{C(t)}(x)$ such that $\partial_tp=\delta_xq$.

\hypertarget{output2}{\textbf{Output:}} $\mathcal{L}^{p,q}:=\sum_{i=0}^\ell a_i\partial_t^i\in\overline{C(t)}[\partial_t]$  such  that  $\pgal(\mathcal{P}/F)\simeq\mathbb{G}_a(K_0;\mathcal{L}^{p,q})$.\end{boxedminipage}
\end{center}

\begin{asparaenum}[\bfseries {Step} 1:]

\item \label{step2.1}Write \vspace{-.125in}\begin{equation*} \vspace{-.07in}p=p_0+\frac{p_1}{p_2}\quad \text{and} \quad q=q_0+\frac{q_1}{q_2},\end{equation*} with $p_k,q_k\in \overline{C(t)}[x]$ for $0\leq k \leq 2$, such that \begin{gather*}  \mathrm{gcd}_x(p_1,p_2)=1=\mathrm{gcd}_x(q_1,q_2), \\ \mathrm{deg}_x(p_1)<\mathrm{deg}_x(p_2) \quad \text{and}\quad\mathrm{deg}_x(q_1)<\mathrm{deg}_x(q_2).\end{gather*}

\item \label{step2.2}Let \vspace{-.125in}\begin{equation*} \smash{d:=\frac{p_2q_2}{\mathrm{gcd}_x\bigl(p_2q_2,\delta_x(p_2q_2)\bigr)}},\vphantom{\sum_i} \end{equation*} let $\nu$ be the smallest integer such that $d^\nu p$ and $d^\nu q$ are both polynomials in $x$, and define \begin{equation*} m  := \mathrm{deg}_x(d),\quad  n  := \mathrm{max}\{\mathrm{deg}_x(p_0),\mathrm{deg}_x(q_0), \nu\},\quad\text{and}\quad  M  := mn+n+1.\end{equation*} We remark that $d$ is the product of the irreducible factors of $p_2q_2$, and $\nu$ is the highest multiplicity of an irreducible factor of $p_2$ or $q_2$.

 \item \label{step2.3}Define a sequence $\{R_i\}\subset K$ recursively: $R_0  :=1$ and $R_i  :=\partial_tR_{i-1}+qR_{i-1}$ for $i\geq 1$.

\item \label{step2.4}For each nonnegative integer $N\leq M$, consider the inhomogeneous first order differential equation (with undetermined coefficients $\alpha_0,\dots ,\alpha_N$): \vspace{-.13in}\begin{equation}\label{ihomy} \delta_xY+  pY=\sum_{i=0}^N\alpha_iR_i.\end{equation} Find bounds\footnote{The computation of these bounds is elementary, but somewhat lengthy. We will indicate how to compute these bounds in Remark \ref{compute-bounds}.} $S_N, T_N\in\mathbb{N}$, depending only on $p$, $q$, and $N$, such that the following condition holds: if $a_0,\dots,a_N\in K_0$ is \emph{any} $(N+1)$-tuple of elements, and $h_N\in K$ satisfies the equation obtained from \eqref{ihomy} after replacing $\alpha_i$ by $a_i$ for each $0\leq i\leq N$, then $h_N$ must be of the form \begin{equation}\label{rh}  h_N=\sum_{j=0}^{S_N}\beta_jx^j+\sum_{k=1}^{T_N}\left(\frac{\sum_{l=0}^{m-1}\xi_{k,l}x^l}{d^k}\right), \end{equation} for some $\beta_0 , \dots ,\beta_{S_N},\xi_{1,0},\dots , \xi_{T_N,m-1}\in K_0$. That is, the degree of the polynomial part of $h_N$ is bounded by $S_N$, and the order of any pole of $h_N$ is bounded by $T_N$, regardless of the values of $\alpha_0,\dots,\alpha_N$ in $K_0$.

\item \label{step2.5}Replace $Y$ with this expression for $h_N$ in \eqref{ihomy}, to obtain \begin{equation*}\tag{$\mathbf{I}_N$}\label{ihomh} \delta_xh_N+ph_N=\sum_{i=0}^N\alpha_iR_i,\end{equation*} where the $\beta_j$ and $\xi_{k,l}$ are to be treated as undetermined $\delta_x$-constants. Multiplying by $d^{T_N+n}$ on each side, we obtain an equality of polynomials in $x$, whose coefficients are homogeneous linear forms in the $\alpha_i$, $\beta_j$, and $\xi_{k,l}$. Equating like-powers of $x$ on both sides, we obtain a system of \begin{equation*}m(T_N+n)+ S_N+n \end{equation*}homogeneous linear equations with coefficients in $\overline{C(t)}$, in the \begin{equation*} (N+1)+(S_N+1)+mT_N \vspace{-.08in}  \end{equation*}variables $\alpha_i$, $\beta_j$, and $\xi_{k,l}$.

\item \label{step2.6}If $N=M$, the system of linear equations defined by~\eqref{ihomh} has a solution. Find the smallest nonnegative integer $\ell\leq M$ such that the system of linear equations defined by~\eqref{ihomh}, with $N=\ell$, has a solution.

\item \label{step2.7}Find a solution \begin{align*}  \alpha_i&=a_i \quad &&\text{for}\ 0\leq i \leq \ell, \\ \beta_j&=b_j\quad &&\text{for}\ 0\leq j \leq S_\ell,\\ \xi_{k,l}&=c_{k,l} \quad &&\text{for}\ 1\leq k \leq T_N\ \text{and}\  0 \leq l \leq m-1;\end{align*} for this system of homogeneous linear equations, such that each $a_i,b_j,c_{k,l}\in\overline{C(t)}$ and $a_\ell=1$. This condition determines $a_0,\dots ,a_\ell$ uniquely.

\item Set $\mathcal{L}^{p,q}:=\sum_{i=0}^\ell a_i\partial_t^i$, and go to \hyperlink{output2}{Output}.

\end{asparaenum}
\end{algo}

%--------------------------

\begin{rem}
Upon inspection, we see that the system of linear equations of Step~\ref{step2.5} has coefficients in the smallest algebraic extension $L$ of $C(t)$ such that $p,q\in L(x)$. Therefore, $\mathcal{L}^{p,q}\in L[\partial_t]$.\end{rem}

\begin{rem}As in \cite[Prop. 4.1(4)]{singer:2011}, the operator $\mathcal{L}^{p,q}$ produced by Algorithm~\ref{exponential} must be nonzero. Otherwise, the element $h\in\overline{C(t)}(x)$ obtained from \eqref{rh} by substituting $\beta_j=b_j$ and $\xi_{k,l}=c_{k,l}$ would have to satisfy the \eqref{ihomh} with $N=0$ and $\alpha_0=0$, which is $\delta_xh=ph$. This implies that $h=a\eta$ for some $a\in K_0$, which would contradict the assumption that $\eta\notin K$.\end{rem}

\begin{eg}[(The Picard-Fuchs equation for the Legendre family of elliptic curves)] Consider the first order inhomogeneous equation\vspace{-.125in} \begin{equation} \label{eg-2-eq1} \partial Y/\partial x=\smash{\bigl(x(x-1)(x-t)\bigr)^{-\frac{1}{2}}} .\end{equation} We will take $\eta:=\bigl(x(x-1)(x-t)\bigr)^{-\frac{1}{2}}$, and apply Algorithm~\ref{exponential} with inputs \begin{equation*}  p:= \frac{\partial\eta/\partial x}{\eta}=-\frac{1}{2}\left(\frac{1}{x}+\frac{1}{x-1}+\frac{1}{x-t}\right)\qquad\text{and}\qquad q:=\frac{\partial\eta/\partial t}{\eta}=\frac{1}{2}\left(\frac{1}{x-t}\right),\end{equation*} to derive the Picard-Fuchs equation for the Legendre family of elliptic curves $E_t:y^2=x(x-1)(x-t)$ (cf. \cite[pp. 77-78]{manin:1964} and \cite[Example 6.9]{gor-ov:2012}). 

In Step~\ref{step2.2}, we compute $d=x(x-1)(x-t)$, $\nu=1$, $m=3$, $n=1$, and $M=5$. Now let \begin{equation*} R_0=1,\quad R_1=q=  \frac{1}{2}\left(\frac{1}{x-t}\right),\quad R_2=\frac{\partial q}{\partial t} +q^2= \frac{3}{4}\left(\frac{1}{(x-t)^2}\right),\quad R_3=\cdots\end{equation*} be the sequence defined in Step~\ref{step2.3} (we will have no need to compute $R_i$ for $i>2$), and consider the inhomogeneous equation with undetermined coefficients of Step~\ref{step2.4}, with $N=2$\begin{equation*}\frac{\partial Y}{\partial x}-\frac{1}{2}\left(\frac{1}{x}+\frac{1}{x-1}+\frac{1}{x-t}\right)Y= \alpha_0+\frac{\alpha_1}{2}\left(\frac{1}{x-t}\right)+\frac{3\alpha_2}{4}\left(\frac{1}{(x-t)^2}\right), \end{equation*} A computation shows that we may take $T_2=1$ and $S_2=1$ (see Remark \ref{compute-bounds}, or the proof of Proposition~\ref{borel-proof} below). Now we write the rational function $h_2=\beta_0+\beta_1x+\frac{\xi_{0,1}}{x-t}$ with undetermined coefficients $\beta_0,\beta_1,\xi_{0,1}$ as in Step~\ref{step2.4}, to obtain \eqref{ihomh} with $N=2$ as in Step~\ref{step2.5}: \begin{equation}\label{eg2-ihom}\tfrac{\partial}{\partial x} \left(\beta_0+\beta_1x+\tfrac{\xi_{1,0}}{x-t}\right) -\tfrac{1}{2}\left(\tfrac{1}{x}+\tfrac{1}{x-1}+\tfrac{1}{x-t}\right)\left(\beta_0+\beta_1x+\tfrac{\xi_{1,0}}{x-t}\right)  = \alpha_0+\tfrac{\alpha_1}{2}\left(\tfrac{1}{x-t}\right)+\tfrac{3\alpha_2}{4}\left(\tfrac{1}{(x-t)^2}\right). \end{equation} After multiplying by $x(x-1)(x-t)^2$ to clear denominators,\footnote{Note that here the algorithm calls for multiplying by $d^2=\bigl(x(x-1)(x-t)\bigr)^2$, but in this concrete example we see directly that this is unnecessary. One can use square-free factorizations to do this in general (see Section \ref{conclusion}).}  and then subtracting the left-hand side from the right-hand side, we obtain that~\eqref{eg2-ihom} holds if and only if the following polynomial in $x$ is zero:\begin{gather*}  (\alpha_0+\tfrac{1}{2}\beta_1)  x^4  +\bigl( (-2t-1)\alpha_0-\tfrac{1}{2}\alpha_1+\tfrac{3}{2}\beta_0-\tfrac{1}{2}t\beta_1  \bigr)  x^3  \\  +\bigl(  (t^2+2t)\alpha_0+(\tfrac{1}{2}t+\tfrac{1}{2})\alpha_1-\tfrac{3}{4}\alpha_2- (\tfrac{5}{2}t+1)\beta_0-\tfrac{1}{2}t\beta_1+\tfrac{5}{2}\xi_{1,0} \bigr) x^2 \\ +\bigl(-t^2\alpha_0-\tfrac{1}{2}t\alpha_1+\tfrac{3}{4}\alpha_2+(t^2+\tfrac{3}{2}t)\beta_0+\tfrac{1}{2}t^2\beta_1-(t+2)\xi_{1,0}      \bigr)   x  +(-\tfrac{1}{2}t^2\beta_0+\tfrac{1}{2}t\xi_{1,0}) x^0 .\end{gather*} Setting each coefficient equal to zero yields the system of linear equations defined in Step~\ref{step2.5}. One can check that $\alpha_0=-\frac{1}{4}$, $\alpha_1=2t-1$, $\alpha_2=t(t-1)$, $\beta_0=\frac{1}{2}(t-1)$, $\beta_1=\frac{1}{2}$, and $\xi_{1,0}=\frac{1}{2}t(t-1)$ is a solution,\footnote{We have taken $\alpha_2=t(t-1)$ instead of $\alpha_2=1$, because this is how the Picard-Fuchs equation usually appears in the literature. } and that indeed\begin{equation*} t(t-1)\frac{\partial^2 \eta}{\partial t^2} +(2t-1)\frac{\partial\eta}{\partial t}-\frac{1}{4}\eta=\frac{\partial}{\partial x}\left( \left(\frac{1}{2}(t-1)+\frac{1}{2}x^2+\frac{\tfrac{1}{2}t(t-1)}{x-t} \right)\eta \right).\end{equation*} After checking that the system of linear equations does not have a solution with $\alpha_2=0$, we conclude that the $\mathrm{PPV}$ group corresponding to \eqref{eg-2-eq1} is $\mathbb{G}_a\bigl(K_0; \ t(t-1)\frac{\partial^2 }{\partial t^2} +(2t-1)\frac{\partial}{\partial t}-\frac{1}{4}\bigr)$.\end{eg}

\begin{prop}[(Algorithm~\ref*{exponential} is correct)]\label{alg2}
Suppose that $\eta\notin K$. Then, the \hyperlink{output2}{Output} of Algorithm~\ref{exponential} is the operator which defines $\pgal(\mathcal{P}/F)$ as a subgroup of $\mathbb{G}_a(K_0)$. In other words, $\pgal(\mathcal{P}/F)\simeq\mathbb{G}_a(K_0;\mathcal{L}^{p,q})$.
\end{prop}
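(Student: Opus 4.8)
The plan is to follow the same two-step pattern as the proof of Proposition~\ref{alg1}, replacing part~\eqref{rcond} of Lemma~\ref{singer1} by part~\eqref{tcond}. The first thing to record is the role of the sequence $\{R_i\}$ from Step~\ref{step2.3}: a routine induction on $i$, using $\partial_t\eta=q\eta$ and the recursion $R_i=\partial_tR_{i-1}+qR_{i-1}$, shows that $\partial_t^i\eta=R_i\eta$ for all $i\geq 0$. Consequently, for any operator $\mathcal{L}=\sum_{i=0}^N\alpha_i\partial_t^i\in K_0[\partial_t]$ we have $\mathcal{L}(\eta)=\bigl(\sum_{i=0}^N\alpha_iR_i\bigr)\eta$, and dividing by $\eta\neq 0$ shows that condition~\eqref{tcond} of Lemma~\ref{singer1}, namely $\mathcal{L}(\eta)=(\delta_xh+ph)\eta$ for some $h\in K$, is equivalent to the assertion that $h$ solves the inhomogeneous equation~\eqref{ihomy}, i.e. $\delta_xh+ph=\sum_{i=0}^N\alpha_iR_i$. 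This is precisely the equation the algorithm manipulates.

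For the first step (containment), I would observe that the solution produced in Step~\ref{step2.7} determines, through~\eqref{rh}, an element $h\in K$ satisfying $\delta_xh+ph=\sum_{i=0}^\ell a_iR_i=\mathcal{L}^{p,q}(\eta)/\eta$. Thus $\mathcal{L}^{p,q}$ and $h$ satisfy part~\eqref{tcond} of Lemma~\ref{singer1}, and the Lemma gives $\pgal(\mathcal{P}/F)\subseteq\mathbb{G}_a(K_0;\mathcal{L}^{p,q})$. Together with the remark following Algorithm~\ref{exponential} (which uses $\eta\notin K$ to guarantee $\mathcal{L}^{p,q}\neq 0$), this shows $\pgal(\mathcal{P}/F)$ is a proper subgroup, so by Theorem~\ref{dag1} it is of the form $\mathbb{G}_a(K_0;\mathcal{L}_0)$ for a unique nonzero monic $\mathcal{L}_0$; it then suffices to prove that $\mathcal{L}^{p,q}$ has the least possible order among operators satisfying~\eqref{tcond}.

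For the second step (minimality), I would argue by contradiction exactly as in Proposition~\ref{alg1}: suppose there were $\tilde{\mathcal{L}}\in K_0[\partial_t]$ of order $\tilde\ell<\ell$ and $\tilde h\in K$ with $\tilde{\mathcal{L}}(\eta)=(\delta_x\tilde h+p\tilde h)\eta$. By the equivalence above, $\tilde h$ solves~\eqref{ihomy} with $N=\tilde\ell$ and $\alpha_i=\tilde a_i$. The decisive input is the defining property of the bounds $S_{\tilde\ell},T_{\tilde\ell}$ from Step~\ref{step2.4}: any solution of~\eqref{ihomy} for this $N$ and \emph{any} coefficients in $K_0$ is forced to have the shape~\eqref{rh}. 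Hence the coefficients of $\tilde h$, together with the $\tilde a_i$, constitute a nonzero solution of the system~\eqref{ihomh} with $N=\tilde\ell$ (nonzero because $\tilde a_{\tilde\ell}\neq 0$), contradicting the minimality of $\ell$ fixed in Step~\ref{step2.6}. Therefore $\mathrm{ord}(\mathcal{L}^{p,q})=\mathrm{ord}(\mathcal{L}_0)$, and since $\mathbb{G}_a(K_0;\mathcal{L}_0)=\pgal(\mathcal{P}/F)\subseteq\mathbb{G}_a(K_0;\mathcal{L}^{p,q})$ with the same $C$-dimension, the two groups coincide.

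I expect the real obstacle to lie not in the argument above but in justifying the one ingredient it takes for granted: that the bounds $S_N,T_N$ of Step~\ref{step2.4} actually exist and bound the polynomial degree and the pole orders of \emph{every} solution $h$ of~\eqref{ihomy}, uniformly in the parameters $\alpha_i\in K_0$. This is the analogue of the elementary estimate $d^{n+\tilde\ell}\tilde{\mathcal{L}}(\eta)\in K_0[x]$ used in Proposition~\ref{alg1}, but it is more delicate here because $h$ is governed by the inhomogeneous first-order equation $\delta_xh+ph=(\cdots)$ rather than by a single integration; controlling it requires a local analysis at the poles of $p$ and $q$ and at infinity. I would therefore isolate this estimate as the technical heart of the matter and defer its verification to Remark~\ref{compute-bounds}, exactly as the algorithm's footnote anticipates.
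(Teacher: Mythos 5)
Your overall architecture matches the paper's proof exactly: record that $\partial_t^i\eta=R_i\eta$, so condition~\eqref{tcond} of Lemma~\ref{singer1} is equivalent to $h$ solving~\eqref{ihomy}; deduce the containment $\pgal(\mathcal{P}/F)\subseteq\mathbb{G}_a(K_0;\mathcal{L}^{p,q})$ from the solution found in Step~\ref{step2.7}; and rule out a lower-order $\tilde{\mathcal{L}}$ by showing its associated $\tilde h$ would produce a solution of~\eqref{ihomh} with $N=\tilde\ell<\ell$, contradicting Step~\ref{step2.6}. You also correctly identify where the real difficulty lies. The problem is that you then stop: the existence of the bounds $S_N,T_N$, uniform in the coefficients $\alpha_i\in K_0$, is not an ingredient you may take for granted or outsource --- it is the content of the proof. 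Step~\ref{step2.4} of the algorithm merely \emph{instructs} one to find such bounds (with a footnote promising a method later), so the correctness proof must establish that they exist; otherwise the minimality argument, and indeed the algorithm itself, is vacuous. Worse, deferring the verification to Remark~\ref{compute-bounds} is circular: that remark is a computational refinement (how to get the bounds without factoring $d$) whose own justification explicitly invokes ``the argument given in the proof of Proposition~\ref{alg2}.'' The logical order in the paper is proof first, remark second.

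The missing argument is a local analysis at the poles, adapted from \cite[Lemma 3.1]{singer:1981}. One first shows any pole $e$ of $\tilde h$ must be a pole of $p$ or $q$: otherwise $\delta_x\tilde h+p\tilde h$ has a pole of order $T+1$ at $e$ while $\sum_i\tilde a_iR_i$ is regular there. Then, comparing $(x-e)$-adic expansions in $\delta_x\tilde h+p\tilde h=\sum_i\tilde a_iR_i$, either $\max\{T+1,T+\tau_2\}=\tau_1$ (so $T\leq\tau_1-1$), or the leading terms on the left cancel, which forces $\tau_2=1$ and $T=w_2$, the leading coefficient of $p$ at $e$; hence $T\leq\max\{w_2,\tau_1-1\}$. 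The key uniformity point is that $\tau_1$ can be replaced by $\min_i\{\mathrm{ord}_e(R_i)\ |\ 0\leq i\leq\tilde\ell\}$, which depends only on $q$ and $\tilde\ell$ and not on the values $\tilde a_i\in K_0$; similarly $w_2$ depends only on $p$. Taking $T_{\tilde\ell}$ larger than all these finitely many local bounds, and bounding $S_{\tilde\ell}$ by comparing degrees of polynomial parts in~\eqref{bad3}, completes the proof. Without this paragraph your write-up is a correct reduction of the proposition to its hardest step, not a proof of it.
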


\begin{proof} Consider the sequence $\{R_i\}\subset K$ defined in Step~\ref{step2.3}. Since $\partial_t\eta=q\eta$, we have that $\partial_t^i\eta=R_i\eta$ for each $i\geq 0$, and therefore \begin{equation*} \smash{\sum_{i=0}^N\alpha_i\partial_t^i\eta=\left(\sum_{i=0}^N\alpha_iR_i\right)\eta},\vphantom{\sum_i}  \end{equation*} for each $0\leq N\leq M$. Note also that the solution found in Step~\ref{step2.7}, for the system of polynomial equations defined in Step~\ref{step2.5}, also defines a rational function $h\in\overline{C(t)}[x]$, obtained from the rational function (with undetermined coefficients) $h_\ell$ of \eqref{rh} by setting $\beta_j=b_j$ and $\xi_{k,l}=c_{k,l}$ for each $j,k,l$: \begin{equation*} h:= \sum_{j=0}^{S_\ell}b_jx^j+\sum_{k=1}^{T_\ell}\left(\frac{\sum_{l=0}^{m-1}c_{k,l}x^l}{d^k}\right)\in\overline{C(t)}(x).\end{equation*} By construction, $\mathcal{L}^{p,q}\in K_0[\partial_t]$ and $h_\ell\in K$ satisfy part \eqref{tcond} of Lemma~\ref{singer1}. Therefore, $\mathcal{L}^{p,q}(b_\sigma)=0$ for every $\sigma\in\pgal(\mathcal{P}/F)$, where $b_\sigma$ is defined as in the discussion preceding Lemma~\ref{singer1}. We need to rule out the possibility that there exist $\smash{\tilde{\mathcal{L}}}\in K_0[\partial_t]$ and $\smash{\tilde{h}}\in K$ satisfying part \eqref{tcond} of Lemma~\ref{singer1} with \begin{equation*}\smash{\tilde{\ell}}:=\mathrm{ord}(\smash{\tilde{\mathcal{L}}})<\mathrm{ord}(\mathcal{L}^{p,q})=\ell.\end{equation*} 

We proceed by contradiction: assume such $\smash{\tilde{\mathcal{L}}}$ and $\smash{\tilde{h}}$ do exist. Let $\smash{\tilde{\mathcal{L}}}:=\sum_{i=0}^{\smash{\tilde{\ell}}}\smash{\tilde{a}}_i\partial_t^i$, with $\smash{\tilde{a}}_i\in K_0$. By Lemma~\ref{singer1}, $\smash{\tilde{h}}$ satisfies the inhomogeneous equation \vspace{-.1in}\begin{equation}\label{bad3}\smash{\delta_x\tilde{h}+p\tilde{h}=\sum_{i=0}^{\tilde{\ell}}\tilde{a}_iR_i}. \vphantom{\sum_i}\end{equation} If the bounds $S_{\smash{\tilde{\ell}}}$ and $T_{\smash{\tilde{\ell}}}$ of Step~\ref{step2.4} exist, then $\smash{\tilde{h}}$ must have the following form  \begin{equation}\label{bad1} \smash{\tilde{h}}=\sum_{j=0}^{S_{\smash{\tilde{\ell}}}}\smash{\tilde{b}}_jx^j+\sum_{k=1}^{T_{\smash{\tilde{\ell}}}}\left(\frac{\sum_{l=0}^m\smash{\tilde{c}}_{k,l}x^l}{d^k}\right). \end{equation} This implies that setting $\smash{\tilde{a}}_i=\alpha_i$, $\smash{\tilde{b}}_j=\beta_j$, and $\smash{\tilde{c}}_{k,l}=\xi_{k,l}$ for each $i,j,k,l$ is a solution for the system of homogeneous linear equations defined by~\eqref{ihomh} with $N=\smash{\tilde{\ell}}$. This contradicts the choice of~$\ell$.

It remains to be shown that the bounds $S_{\smash{\tilde{\ell}}}$ and $T_{\smash{\tilde{\ell}}}$ of Step~\ref{step2.4} exist. We will compute these bounds directly, using a modification of the argument given in \cite[Lemma 3.1]{singer:1981}, and we will see that they depend only on $p$, $q$, and $\smash{\tilde{\ell}}$. Let $e\in K_0$ be a pole of $\smash{\tilde{h}}\in K_0(x)$. We claim that $e$ must be a pole of either $p$ or $q$. To see this, suppose on the contrary that $e$ is a pole of $\smash{\tilde{h}}$, but not of $p$ or $q$. If $-T$ is the order of $\smash{\tilde{h}}$ at $e$, with $T\geq 1$, then the order of $\delta_x\smash{\tilde{h}}+p\smash{\tilde{h}}$ is $-T-1$ (because $e$ is not a pole of $p$), while the order of $\sum_{i=0}^{\smash{\tilde{\ell}}}\smash{\tilde{a}}_iR_i$ is nonnegative (because $e$ is not a pole of $q$); this gives a contradiction. Now let $e$ be a pole of $p$ or $q$, and let \begin{equation*} \sum_{i=0}^{\smash{\tilde{\ell}}}\smash{\tilde{a}}_iR_i=\frac{w_1}{(x-e)^{\tau_1}}+\text{(higher order terms)} \qquad\quad\begin{aligned} p&=\frac{w_2}{(x-e)^{\tau_2}}+\text{(higher order terms)} \\ \smash{\tilde{h}}&=\frac{w_3}{(x-e)^T}+\text{(higher order terms)}.\end{aligned} \end{equation*} be the $(x-e)$-adic expansions of these elements, with $w_2\in \overline{C(t)}^\times$, and $w_1,w_3\in K_0^\times$. If we substitute these expressions in \eqref{bad3}, we obtain \begin{equation}\label{poles}\!\!\! \biggl(\frac{-Tw_3}{(x-e)^{T+1}}+\text{(higher order terms)}\biggr)+ \left(\frac{w_2w_3}{(x-e)^{T+\tau_2}}+\text{(higher order terms)}\right)  =\frac{w_1}{(x-e)^{\tau_1}}+\text{(higher order terms)}. \end{equation}We see that either $\mathrm{max}\{T+1,T+\tau_2\}=\tau_1$, or else the lowest order terms in the left-hand side of \eqref{poles} must cancel. In order for this cancellation to take place, it is necessary that $T+1=\tau_2+1$ and $Tw_3=w_2w_3$, or, equivalently, $\tau_2=1$ and $T=w_2$. In any case, we have that $T\leq\mathrm{max}\{w_2,\tau_1-1\}$ (if $w_2$ is not an integer, we deduce that $T\leq\tau_1-1$). Evidently, $w_2$ depends only on $p$. Although $\tau_1$ depends on the $\smash{\tilde{a}}_i$, as well as on $q$ and $\smash{\tilde{\ell}}$, one can easily find a (generic) upper bound for $\tau_1$ which depends only on $q$ and $\smash{\tilde{\ell}}$, and not on the specific values of the $\smash{\tilde{a}}_i$; namely, take $\tau_1:=\mathrm{min}_i\{\mathrm{ord}_e(R_i) \ | \ 0\leq i \leq \smash{\tilde{\ell}}\}$. Now we may take $T_{\smash{\tilde{\ell}}}$ to be greater than any of the finitely many bounds so obtained for each pole of $p$ and $q$. 

The bound $S_{\smash{\tilde{\ell}}}$ is computed as follows: let $\smash{\tilde{S}}$ be an upper bound for the degree of the polynomial part of each $R_i$, for $0\leq i \leq \smash{\tilde{\ell}}$. It follows from \eqref{bad3} that $\mathrm{max}\{ S_{\smash{\tilde{\ell}}}-1,\mathrm{deg}_x(p_0)+S_{\smash{\tilde{\ell}}} \}\leq \smash{\tilde{S}}$, where $p_0$ is defined as in Step~\ref{step2.1}. This concludes the proof that the bounds $T_{\smash{\tilde{\ell}}}$ and $S_{\smash{\tilde{\ell}}}$ of Step~\ref{step2.4} exist. \end{proof}

\begin{rem}\label{compute-bounds}
In the preceding proof, we showed how to compute the bounds $S_N$ and $T_N$ of Step~\ref{step2.4} by working with the $(x-e)$-adic expansions of $p$ and $q$, for each linear factor $(x-e)$ of $d$ over $\overline{C(t)}[x]$. In practice, it may not be feasible to compute a factorization of $d$ into irreducible factors over $C(t)$, let alone linear factors over $\overline{C(t)}$. We will now indicate how to obtain the bounds $S_N$ and $T_N$ \emph{without} factorizing $d$.

Let $(a_0,\dots,a_N)$ be any $(N+1)$-tuple of elements of $K_0$, and let $h\in K$ be a solution for the equation obtained from \eqref{ihomy} by replacing $\alpha_i$ with $a_i$ for each $i$:\vspace{-.133in}\begin{equation}\label{bad2} \delta_xY+pY=\sum_{i=0}^Na_iR_i.\end{equation}If $(x-e)$ is a factor of $d$ in $\overline{C(t)}[x]$, the argument given in the proof of Proposition~\ref{alg2} shows that $h$ can have a pole at $e$ of order at most $\tau_1-1$, where $\tau_1$ is defined as in the proof of Proposition~\ref{alg2}, \emph{unless} $p$ has a pole at $e$ of order exactly $1$. Given this a priori bound, it suffices to bound the order of a pole of $h$ at the simple poles of $p$. 

We now compute the \emph{divisor of simple poles} of $p$: \begin{equation*}  d_1:=\frac{\mathrm{gcd}_x(d,p_2)}{\mathrm{gcd}_x(d,p_2,\delta_xp_2)}. \end{equation*} This is the product of all linear factors of $p_2$ which have multiplicity one in $p_2$. We will mimic the argument given at the end of Proposition~\ref{alg2}, using powers of $d_1$ as denominators, instead of powers of $(x-e)$. Write \begin{equation*} \sum_{i=0}^Na_iR_i  =\frac{w_1}{d_1^{\tau}}+\text{(irrelevant terms)}, \quad p  =\frac{w_2}{d_1}+\text{(irrelevant terms)} ,\quad\text{and}\quad h  = \frac{w_3}{d_1^T}+\text{(irrelevant terms)}, \end{equation*} where $ w_2\in \overline{C(t)}[x]$, $w_1,w_3\in K_0[x]$, and $\mathrm{deg}_x(w_j)<\mathrm{deg}_x(d_1)$ for each $1\leq j \leq 3$. A term in $\sum_ia_1R_i$, $p$, or $h$ is deemed ``irrelevant'' if its order at \emph{every} linear factor of $d_1$ is higher than $-\tau $, $-1$, or $-T$, respectively. In order to bound $T$, we proceed as before: substituting these expressions in \eqref{bad2}, we obtain \begin{equation}\label{poles2} \left(\frac{-Tw_3\delta_xd_1}{d_1^{T+1}}+\text{(irrelevant terms)}\right)+\left(\frac{w_2w_3}{d_1^{T+1}}+\text{(irrelevant terms)}\right) = \frac{w_1}{d_1^{\tau }}+\text{(irrelevant terms)}.\end{equation} It follows that $T+1\geq\tau $. If this inequality is strict, then the lowest order terms in the left-hand side of \eqref{poles2} must cancel. For such cancellation to take place, it is necessary for $(w_2w_3-Tw_3\delta_xd_1)$ to be divisible by $d_1$ in $K_0[x]$. This is almost what we want, except that this condition on $T$ depends on $w_3$, which in turn depends on the $a_i$. However, since $\mathrm{deg}_x(w_3)<\mathrm{deg}_x(d_1)$, we have the implication \begin{equation*}  d_1\big|\big(w_3(w_2-T\delta_xd_1)\big)\Longrightarrow\mathrm{gcd}_x\big(d_1, (w_2-T\delta_xd_1)\big)\neq 1.\end{equation*} If we now perform the Euclidean algorithm to compute this $\mathrm{gcd}_x$, we will obtain a series of residues. These residues will be polynomials in $x$, whose coefficients are polynomials in $\overline{C(t)}[T]$. At least one of these residues must be zero in order for the $\mathrm{gcd}_x$ to be different from $1$. Therefore, $T$ must satisfy at least one of a finite number of explicitly constructed polynomial equations with coefficients in $\overline{C(t)}$, and we may obtain a bound on the size of any integer solution for a given polynomial equation, depending only on the coefficients of the given polynomial. Having thus obtained the bound $T_N$, we obtain the bound $S_N$ by comparing the degrees of the polynomial parts of the right- and left-hand sides of \eqref{bad2}, after replacing $Y$ with $h$ and clearing denominators.\end{rem}

%---------------------------------------------------------------------------

\subsection{The $\mathrm{PPV}$ group of an intersection of $\mathrm{PPV}$ fields}\label{comparison-sec} In this section we give an algorithm to compute the $\mathrm{PPV}$ group corresponding to the intersection of two $\mathrm{PPV}$ fields of the form considered at the beginning of this section. We will apply this algorithm in \S\ref{recover} to recover the $\mathrm{PPV}$ group corresponding to a second order equation from a pair of auxiliary differential algebraic groups.

Let $\eta_1,\eta_2\in\overline{C(t)}(x)$, and consider the first order inhomogeneous equation $\delta_xY=\eta_ r $, for $ r =1,2$. If we let $p_ r :=\frac{\delta_x\eta_ r }{\eta_ r }$, the discussion at the beginning of this section shows that the corresponding homogeneous equations are given by the operators \vspace{-.125in}\begin{equation*}\mathcal{P}_1:=\delta_x^2-p_1\delta_x\quad\text{and}\quad\mathcal{P}_2:=\delta_x^2-p_2\delta_x,\end{equation*} and that $\pgal(\mathcal{P}_ r /K)$ is a differential algebraic subgroup of $\mathbb{G}_a(K_0)$. Let $E$ denote the $\mathrm{PPV}$ extension of $\mathrm{PPV}(\mathcal{P}_1/K)$ corresponding to the operator $\mathcal{P}_2$, and let $\theta_1,\theta_2\in E$ such that $\delta_x\theta_ r =\eta_ r $. Then, we have that \begin{equation*} K\langle\theta_ r \rangle_\Delta=\mathrm{PPV}(\mathcal{P}_ r /K)\subseteq E=K\langle\theta_1,\theta_2\rangle_\Delta, \ \text{and we may define}\ L:=\mathrm{PPV}(\mathcal{P}_1/K)\cap\mathrm{PPV}(\mathcal{P}_2/K)\subseteq E.  \end{equation*} The group $\pgal(\mathcal{P}_ r /L)$ is normal in $\pgal(\mathcal{P}_ r /K)$, since the latter is abelian. It follows from the Galois correspondence (Theorem~\ref{pgal}) that $L$ is a $\mathrm{PPV}$ extension of $K$ (for some operator). If we denote the corresponding $\mathrm{PPV}$ group by $\Lambda$, the Galois correspondence implies that we have surjections $\pi_ r :\pgal(\mathcal{P}_ r /K)\twoheadrightarrow\Lambda$, and that \begin{equation*}\pgal(E/K)\simeq\pgal(\mathcal{P}_1/K)\times_\Lambda\pgal(\mathcal{P}_2/K),\end{equation*} where the fibered product\footnote{Recall that this is the subgroup of elements $(\sigma_1,\sigma_2)\in\pgal(\mathcal{P}_1/K)\times\pgal(\mathcal{P}_2/K)$ such that $\pi_1(\sigma_1)=\pi_2(\sigma_2)$.} is taken with respect to the differential-algebraic homomorphisms $\pi_ r $.

The following algorithm computes $\Lambda$, as well as the maps $\pi_1$ and $\pi_2$.

%% ALGORITHM 5 ----------------------------------------

\begin{algo}[(Intersection)]\label{comparison}$\hphantom{X}$
\begin{center}
\begin{boxedminipage}{14cm}\textbf{Input:} $\eta_1,\eta_2\in\overline{C(t)}(x)$.

\hypertarget{output5}{\textbf{Output:}} A differential algebraic group $\Lambda^{\eta_1,\eta_2}$, and differential algebraic homomorphisms $\pi_ r ^{\eta_1,\eta_2}:\pgal(\mathcal{P}_ r )\twoheadrightarrow\Lambda^{\eta_1,\eta_2}$, for $ r =1,2$, defined by \begin{equation*}\Lambda^{\eta_1,\eta_2}:=\mathbb{G}_a(K_0;\mathcal{L}'')\quad\text{and}\quad \pi_ r ^{\eta_1,\eta_2}:\sigma\mapsto\mathcal{L}_ r '(\sigma),\end{equation*} such that the $\mathrm{PPV}$ group of $\mathrm{PPV}(\mathcal{P}_1/K)\cap\mathrm{PPV}(\mathcal{P}_2/K)$ is isomorphic to $\Lambda^{\eta_1,\eta_2}$ and $\pi_ r ^{\eta_1,\eta_2}$ is the corresponding surjection of $\mathrm{PPV}$ groups (cf. Theorem \ref{pgal}).\end{boxedminipage}
\end{center}

\begin{asparaenum}[\bfseries {Step} 1:]

\item \label{step5.1} Apply Algorithm~\ref{rational} with inputs $\eta_1$ and $\eta_2$, and let $\mathcal{L}_1:=\mathcal{L}^{\eta_1}$ and $\mathcal{L}_2:=\mathcal{L}^{\eta_2}$ be the \hyperlink{output1}{Outputs}. If $\mathrm{ord}(\mathcal{L}_1)\geq\mathrm{ord}(\mathcal{L}_2)$, proceed to Step~\ref{step5.2}. 

Otherwise, reverse the roles of $\eta_1$ and $\eta_2$, so that $\mathrm{ord}(\mathcal{L}_1)\geq\mathrm{ord}(\mathcal{L}_2)$, and then proceed to Step~\ref{step5.2}.

\item \label{step5.2} Write \vspace{-.115in}\begin{equation*}\smash{ \eta_1=\eta_1^{(0)}+\frac{\eta_1^{(1)}}{\eta_1^{(2)}}\quad\text{and}\quad \eta_2=\eta_2^{(0)}+\frac{\eta_2^{(1)}}{\eta_2^{(2)}}},\vphantom{\sum_i}  \end{equation*} with\footnote{The parenthetical superscript in $\eta_j^{(i)}$ is an index, and does not denote an $i^{\text{th}}$ derivative.} $\eta_1^{(i)},\eta_2^{(i)}\in\overline{C(t)}[x]$ for $0\leq i\leq 2$, such that \begin{gather*} \smash{\mathrm{gcd}_x(\eta_1^{(1)},\eta_1^{(2)})=1=\mathrm{gcd}_x(\eta_2^{(1)},\eta_2^{(2)})}, \\ \mathrm{deg}_x(\eta_1^{(1)})<\mathrm{deg}_x(\eta_1^{(2)})\quad\text{and}\quad\mathrm{deg}_x(\eta_2^{(1)})<\mathrm{deg}_x(\eta_2^{(2)}). \end{gather*}

\item \label{step5.3} Let \vspace{-.123in}\begin{equation*} \smash{d:=\frac{\eta_1^{(2)}\eta_2^{(2)}}{\mathrm{gcd}_x\Bigl(\eta_1^{(2)}\eta_2^{(2)},\delta_x\bigl(\eta_1^{(2)}\eta_2^{(2)}\bigr)\Bigr)}},\vphantom{\sum_{\substack{i \\ i}}}  \end{equation*} an let $n\in\mathbb{N}$ be the smallest nonnegative integer such that $d^{n}\eta_ r $ is a polynomial in $x$, for each $ r =1,2$. We remark that $d$ is the product of the irreducible factors of $\eta_1^{(2)}$ and $\eta_2^{(2)}$, and $n$ is the highest multiplicity of an irreducible factor of $\eta_1^{(2)}$ or $\eta_2^{(2)}$.

\item \label{step5.4} Set \begin{equation*}\ell_ r   :=\mathrm{ord}(\mathcal{L}_ r ),\qquad  \nu  :=\ell_1-\ell_2,\qquad s  :=\mathrm{max}\left\{\mathrm{deg}_x(\eta_1^{(0)}),\mathrm{deg}_x(\eta_2^{(0)})\right\}, \qquad\text{and}\qquad M  :=\mathrm{deg}_x(d).\end{equation*} For each $\nu\leq N \leq\ell_1$, write \begin{equation*} \mathcal{L}_N^{(1)}  := \sum_{i=0}^N \alpha_i\partial_t^i, \qquad \mathcal{L}_N^{(2)}  := \sum_{j=0}^{N-\nu}\beta_j\partial_t^j,\qquad\text{and}\qquad f_N  := \sum_{k=0}^{s+1}\gamma_kx^k +\sum_{l=1}^{(n+N-1)}\left( \frac{\sum_{m=0}^{M-1}\xi_{l,m}x^m}{d^l} \right),  \end{equation*} where the $\alpha_i$, $\beta_j$, $\gamma_k$, and $\xi_{l,m}$ are undetermined coefficients.

\item \label{step5.5} Treating each $\gamma_k$ and $\xi_{l,m}$ as a $\delta_x$-constant, set \begin{equation*}\label{ihom-comp}\tag{$\mathbf{J}_N$} \mathcal{L}_N^{(1)}(\eta_1)-\mathcal{L}_N^{(2)}(\eta_2)=\delta_xf_N,\end{equation*} and then multiply each side of this equation by $d^{n+N}$. The result will be an equality of polynomials in $x$, whose coefficients are linear forms in the $\alpha_i$, $\beta_j$, $\gamma_k$, and $\xi_{l,m}$. Equating coefficients of like-powers of $x$, we obtain a system of\vspace{-.125in} \begin{equation*}  M(n+N)+s \end{equation*} homogeneous linear equations with coefficients in $\overline{C(t)}$, in the \begin{equation*} (N+1)+(N-\nu+1)+(s+2)+M(n+N-1)  \vspace{-.1in}\end{equation*} variables $\alpha_i$, $\beta_j$, $\gamma_k$, $\xi_{l,m}$.

\item \label{step5.6} If $N=\ell_1$, the system of linear equations defined by~\eqref{ihom-comp} has a solution with not all $\alpha_i$ being zero and not all $\beta_j$ being zero. Find the smallest nonnegative integer $\omega\leq\ell_1$ such that the system of linear equations defined by~\eqref{ihom-comp}, with $N=\omega$, has a solution with not all $\alpha_i$ being zero.

\item \label{step5.7} If $\omega=\ell_1$, set $\mathcal{L}_1':=\mathcal{L}_1$, $\mathcal{L}_2':=\mathcal{L}_2$, and $\mathcal{L}'':=1$, and then go to \hyperlink{output5}{Output}.

Otherwise, find a solution \begin{align*}  \alpha_i&=a_i \quad &&\text{for}\ 0\leq i \leq \omega, \\ \beta_j&=b_j\quad &&\text{for}\ 0\leq j \leq \omega-\nu,\\ \gamma_k & = c_k \quad && \text{for}\  0\leq k\leq s+1\\ \xi_{l,m}&=e_{l,m} \quad &&\text{for}\ 1\leq l \leq n+N-1\ \text{and}\  0 \leq m \leq M-1;\end{align*} such that each $a_i,b_j,c_k,e_{l,m}\in\overline{C(t)}$ and $a_\omega=1$. This condition determines $a_0,\dots,a_\omega$ and $b_0,\dots,b_{\omega-\nu}$ uniquely.

\item \label{step5.8} Set $\mathcal{L}_1':=\sum_{i=0}^\omega a_i\partial_t^i$ and $\mathcal{L}_2':=\sum_{j=0}^{\omega-\nu}b_j\partial_t^j$, let $\mathcal{L}''\in\overline{C(t)}[\partial_t]$ be the unique monic operator such that\footnote{The ring $\overline{C(t)}[\partial_t]$ is a left and right Euclidean domain, and there are algorithms to compute this $\mathcal{L}''$ (see \cite[\S2.1]{vanderput-singer:2003}).} $\mathcal{L}_1=\mathcal{L}''\mathcal{L}_1'$, and go to \hyperlink{output5}{Output}.
\end{asparaenum}
\end{algo} 

\begin{eg} We will now apply Algorithm \ref{comparison} with inputs $\eta_1:=\frac{x^2+t^2x+t}{x^3+tx}$ and $\eta_2:=\frac{2t}{x^2+t}$; note that $\eta_1=\frac{1}{2}t\eta_2+\frac{1}{x}$. The first step is to apply Algorithm \ref{rational} with inputs $\eta_1$ and $\eta_2$, to obtain the operators $\mathcal{L}_1:=\frac{\partial^2}{\partial t^2}-\frac{1}{2t}\frac{\partial}{\partial t}$ and $\mathcal{L}_2:=\frac{\partial}{\partial t}-\frac{1}{2t}$ (cf. Example~\ref{example1}). The polynomial $d$ of Step \ref{step5.3} is given by $d=x^3+tx$. We then compute the integers $n=1$, $\ell_1=2$, $\ell_2=1$, $\nu=1$, $M=3$, $s=0$, and write down the pair of operators and the rational function with undetermined coefficients of Step~\ref{step5.4}, with $N=1$: \begin{equation*} \smash{\mathcal{L}_1^{(1)} :=\alpha_1\partial_t+\alpha_0, \qquad \mathcal{L}_1^{(2)}  :=\beta_0,  \qquad \text{and}\qquad \smash{f_1:=\gamma_0+\gamma_1x+\frac{\xi_{1,0}+\xi_{1,1}x+\xi_{1,2}x^2}{x^3+tx}}.}\end{equation*} Then, we substitute these expressions in equation \eqref{ihom-comp} as in Step~\ref{step5.5}, to obtain \begin{gather} \label{eg-comp-eq} \alpha_1\tfrac{\partial}{\partial t}\left( \tfrac{x^2+t^2x+t}{x^3+tx}\right)+\alpha_0\cdot\tfrac{x^2+t^2x+t}{x^3+tx}-\beta_0\cdot\tfrac{2t}{x^2+t}=\tfrac{\partial}{\partial x}\left(\gamma_0+\gamma_1x+\tfrac{\xi_{1,0}+\xi_{1,1}x+\xi_{1,2}x^2}{x^3+tx}  \right). \\ \intertext{After expanding this out and doing some simplification, we obtain} \notag  \tfrac{-\alpha_1t^2x^2}{(x^3+tx)^2} + \tfrac{\alpha_0x^2+(\alpha_0t^2+2\alpha_1t)x+\alpha_0t}{x^3+tx}-\tfrac{2t\beta_0}{x^2+t}=\gamma_1+\tfrac{\xi_{1,1}+2\xi_{1,2}x}{x^3+tx}-\tfrac{(\xi_{1,0}+\xi_{1,1}x+\xi_{1,2}x^2)(3x^2+t)}{(x^3+tx)^2}  .\end{gather} After multiplying by $d^2=(x^3+tx)^2$ on both sides, and then subtracting the left-hand side from the right-hand side, we obtain that \eqref{eg-comp-eq} holds if and only if the following polynomial in $x$ is zero: \begin{multline*}  \gamma_1  x^6 -\alpha_0x^5 +\bigl( -t^2\alpha_0-2t\alpha_1-2t\beta_0+2t\gamma_1-\xi_{1,2}  \bigr)  x^4 +\bigl( -2t\alpha_0-2\xi_{1,1} \bigr) x^3  \\ +\bigl(-t^3\alpha_0-t^2\alpha_1-2t^2\beta_0+t^2\gamma_1-3\xi_{1,0}+t\xi_{1,2}     \bigr)   x^2  -t^2\alpha_0 x -t  \xi_{1,0}x^0. \end{multline*} Setting each coefficient equal to zero yields the system of homogeneous linear equations defined in Step~\ref{step5.5}. We verify that $\alpha_0=0$, $\alpha_1=1$, $\beta_0=\frac{3}{4}$, $\gamma_1=0$, $\xi_{1,0}=0$, $\xi_{1,1}=0$, $\xi_{1,2}=-\frac{1}{2}t$ is a solution, and that indeed $\smash{\frac{\partial}{\partial t}\eta_1-\frac{3}{4}\eta_2=\frac{\partial}{\partial x}\bigl( \frac{-tx^2}{2(x^3+tx)} \bigr)}$. The smallest value for $N$ allowed by Step~\ref{step5.4} is $N=1$, since we assume $N\geq\nu=1$. Therefore, $\omega=1$, and we have $\smash{\mathcal{L}_1'=\frac{\partial}{\partial t}}$, $\smash{\mathcal{L}_2'=\frac{3}{4}}$, and $\smash{\mathcal{L}''=\frac{\partial}{\partial t}-\frac{1}{2t}}$. \end{eg}

Our proof that Algorithm~\ref{comparison} gives the right answer will rely on the following well-known result.

\begin{thm}[(Kolchin-Ostrowski \cite{kolchin:1968})]\label{kolostro} Let $E$ be a $\delta_x$-field extension of $K$ such that $E^{\delta_x}=K_0$, and let $\mathfrak{e}_1,\dots,\mathfrak{e}_m,\mathfrak{f}_1,\dots,\mathfrak{f}_n\in E$ such that $\tfrac{\delta_x\mathfrak{e}_i}{\mathfrak{e_i}}\in K$ for each $1\leq i\leq m$ and $\delta_x\mathfrak{f}_j\in K$ for each $1\leq j \leq n$.

Then, there exists a nonzero polynomial $G\in K[X_1,\dots,X_m,Y_1,\dots,Y_n]$ such that $G(\mathfrak{e}_1,\dots ,\mathfrak{e}_m,\mathfrak{f}_1,\dots,\mathfrak{f}_n)=0$ if and only if at least one of the following holds:
\begin{enumerate}
\item There exist integers $m_i\in\mathbb{Z}$, not all zero, such that $\prod_{i=1}^m\mathfrak{e}_i^{m_i}\in K.$

\item There exist elements $c_j\in K^{\delta_x}$, not all zero, such that $\sum_{j=1}^nc_j\mathfrak{f}_j\in K.$

\end{enumerate}
\end{thm}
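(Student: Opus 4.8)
To produce a nonzero $G$ from (1) or (2) is immediate: if $\prod_i\mathfrak{e}_i^{m_i}=g\in K$ with the $m_i\in\mathbb{Z}$ not all zero, then $\prod_{m_i>0}X_i^{m_i}-g\prod_{m_i<0}X_i^{-m_i}$ is a nonzero polynomial vanishing on the tuple, and if $\sum_j c_j\mathfrak{f}_j=g\in K$ with $c_j\in K^{\delta_x}$ not all zero, then $\sum_j c_jY_j-g$ works. So the content is the converse, which I would prove by contraposition: writing $a_i:=\delta_x\mathfrak{e}_i/\mathfrak{e}_i\in K$ and $b_j:=\delta_x\mathfrak{f}_j\in K$, I assume that neither (1) nor (2) holds and show that $\mathfrak{e}_1,\dots,\mathfrak{e}_m,\mathfrak{f}_1,\dots,\mathfrak{f}_n$ are algebraically independent over $K$. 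The central device is the operator $D$ on $K[X_1,\dots,X_m,Y_1,\dots,Y_n]$ defined by
\[ D(G):=G^{\delta_x}+\sum_{i=1}^m a_iX_i\tfrac{\partial G}{\partial X_i}+\sum_{j=1}^n b_j\tfrac{\partial G}{\partial Y_j}, \]
where $G^{\delta_x}$ means applying $\delta_x$ to the coefficients of $G$. By the product rule, $D(G)(\mathfrak{e},\mathfrak{f})=\delta_x\bigl(G(\mathfrak{e},\mathfrak{f})\bigr)$, so $D$ carries algebraic relations to algebraic relations. Two features of $D$ drive everything: the middle summand acts on a monomial $X^\alpha$ as multiplication by the scalar $c_\alpha:=\sum_i\alpha_i a_i\in K$, so $D$ preserves the set of $X$-monomials that occur; and the last summand never raises the $Y$-degree.

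\textbf{Stripping the exponentials.} Grouping a hypothetical minimal nonzero relation as $G=\sum_\alpha X^\alpha P_\alpha(Y)$ with $P_\alpha\neq0$ and setting $u_\alpha:=\prod_i\mathfrak{e}_i^{\alpha_i}$ (so $\delta_x u_\alpha=c_\alpha u_\alpha$), the relation reads $\sum_\alpha P_\alpha(\mathfrak{f})\,u_\alpha=0$ over the $\delta_x$-field $E':=K(\mathfrak{f}_1,\dots,\mathfrak{f}_n)$. The failure of (1) forces the $c_\alpha$ to be pairwise distinct, because $c_\alpha=c_{\alpha'}$ would make $\prod_i\mathfrak{e}_i^{\alpha_i-\alpha'_i}$ a $\delta_x$-constant with integer exponents not all zero, hence an element of $E^{\delta_x}=K_0\subseteq K$, i.e.\ an instance of (1). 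With distinct $c_\alpha$ I would run the standard ``differentiate the shortest $E'$-linear relation among the $u_\alpha$ and subtract $c_{\alpha_0}$ times itself'' argument: this yields a strictly shorter relation unless the coefficients $g_\alpha$ satisfy $\delta_x g_\alpha=(c_{\alpha_0}-c_\alpha)g_\alpha$, which in turn puts a ratio $u_\alpha/u_{\alpha_0}$ into $E'$ and again reproduces (1). Hence the $u_\alpha$ are $E'$-linearly independent, every $P_\alpha(\mathfrak{f})=0$, and a single surviving $\alpha$ leaves a nonzero $P_\alpha$ with $P_\alpha(\mathfrak{f})=0$: a relation purely among the primitives.

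\textbf{Stripping the primitives.} It remains to show that an algebraic relation among $\mathfrak{f}_1,\dots,\mathfrak{f}_n$ forces (2), which I would prove by induction on $n$. Taking a minimal-degree relation and normalizing one top-degree coefficient to $1$, I apply $D$ (now only the coefficient-derivative and $\sum_j b_j\partial_{Y_j}$ parts act): the normalized monomial is annihilated by $G^{\delta_x}$, so $D(G)$ has strictly smaller degree, and minimality forces $D(G)=0$ identically. Reading off the next-to-top coefficients then exhibits an element of the shape $\sum_j c_j\mathfrak{f}_j+k$, with $c_j\in K^{\delta_x}$ and $k\in K$, that is a $\delta_x$-constant, hence lies in $K_0\subseteq K$; this is precisely an instance of (2), giving the contradiction. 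The base case $n=1$ is the classical fact that a primitive $\mathfrak{f}$ with $\delta_x\mathfrak{f}\in K$ is either in $K$ or transcendental over $K$.

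\textbf{The main obstacle.} The delicate point is the bookkeeping of constants: each reduction concludes ``this element is a $\delta_x$-constant, hence in $K_0$,'' which is legitimate only because of the standing hypothesis $E^{\delta_x}=K_0$, and the two reductions are not genuinely independent—the linear-independence argument for the $u_\alpha$ over $E'$ tacitly needs that an element of $E'=K(\mathfrak{f})$ with logarithmic derivative in $K$ cannot manufacture a spurious relation, which is itself a case of the primitive statement. I would dissolve this circularity by organizing the whole argument as a single induction on $m+n$ (equivalently, on the transcendence degree of $K(\mathfrak{e},\mathfrak{f})$ over $K$), so that when stripping the exponentials I may assume the theorem already known for the primitive subfield $E'$, and in particular that $(E')^{\delta_x}=K_0$. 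This interlocking of the constant-field control with the induction is the part I expect to require the most care.
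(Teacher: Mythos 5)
The first thing to say is that the paper contains no proof of Theorem~\ref{kolostro}: it is quoted as a classical result of Kolchin \cite{kolchin:1968} and used as a black box (in Proposition~\ref{comparison-proof} and later), so your proposal can only be measured against the classical argument — and measured that way, it has a genuine gap. Your skeleton (contraposition, the derivation $D$, stripping exponentials via the pairwise distinct eigenvalues $c_\alpha$, then a primitives-only statement) is the standard route, and you correctly located the dangerous point, but the resolution you propose does not work. The step that needs proof is: if $\delta_xv/v\in K$ and $v\in E'=K(\mathfrak{f}_1,\dots,\mathfrak{f}_n)$, then $v\in K$ — this is what converts ``$u_\alpha/u_{\alpha_0}\in E'$'' into an instance of (1). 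As a dependence statement about the tuple $(v,\mathfrak{f}_1,\dots,\mathfrak{f}_n)$, this \emph{is} the case $(m,n)=(1,n)$ of the theorem. An induction on $m+n$ supplies it while proving the cases $m\geq 2$, but at $m=1$ the instance you need has the same index $1+n$ as the case being proved, and knowing the theorem ``for the primitive subfield $E'$'' (the case $m=0$) says nothing about an exponential that happens to lie in $E'$. The variant induction on transcendence degree fares no better: the problematic situation is precisely an exponential lying in (or algebraic over) $K(\mathfrak{f})$, and there the transcendence degree of the generated field does not drop, so the induction parameter fails to decrease exactly at the crux. What is actually required is a direct lemma: granting (by the $m=0$ case) that the $\mathfrak{f}_j$ are algebraically independent, factor $v=c\prod_i p_i^{n_i}$ in the unique factorization domain $K[\mathfrak{f}_1,\dots,\mathfrak{f}_n]$, note that a monic irreducible $p$ satisfies $D(p)\neq 0$ (otherwise $p(\mathfrak{f})\in E^{\delta_x}=K_0\subseteq K$ contradicts independence) and that $p$ cannot divide $D(p)$ (the leading monomial of $p$ does not occur in $D(p)$), and compare orders at $p$ on both sides of $\delta_xv/v=\delta_xc/c+\sum_i n_iD(p_i)/p_i$ to force every $n_i=0$, whence $v\in K$. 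This partial-fraction computation — the same kind the paper carries out in proving Lemma~\ref{singer1} — is the real heart of the theorem, and it appears nowhere in your proposal.

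There is a second, independent gap in your primitive-stripping step. For $n\geq 2$ it is false that normalizing one top-degree coefficient of a minimal-degree relation $G$ makes $D(G)$ have strictly smaller degree: $G^{\delta_x}$ kills only the normalized monomial, and the other top-degree coefficients of $G$ need not be $\delta_x$-constants. The standard repair is to work instead with the monic minimal polynomial of $\mathfrak{f}_n$ over $K':=K(\mathfrak{f}_1,\dots,\mathfrak{f}_{n-1})$ (a one-variable situation, where monicity does force the degree drop and minimality does give $D(P)=0$); but then the next-to-top coefficient only yields $\mathfrak{f}_n+p_{d-1}/d\in E^{\delta_x}=K_0$ with $p_{d-1}\in K'$, i.e.\ $\mathfrak{f}_n\in K'$ — not yet a relation of type (2), since $p_{d-1}$ is an arbitrary rational function of $\mathfrak{f}_1,\dots,\mathfrak{f}_{n-1}$. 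You still owe the sub-lemma that a primitive lying in $K(\mathfrak{f}_1,\dots,\mathfrak{f}_{n-1})$ must have the form $\sum_j c_j\mathfrak{f}_j+k$ with $c_j\in K^{\delta_x}$ and $k\in K$, which is again a pole-order and degree analysis in the transcendental generators, not a consequence of the inductive hypothesis. Both missing lemmas are classical and true, so your outline is repairable; but as written it organizes the reductions and then stops exactly where the substance of the Kolchin--Ostrowski theorem begins.
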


\begin{prop}[(Algorithm~\ref*{comparison} is correct)] \label{comparison-proof} Let $\eta_1,\eta_2\in\overline{C(t)}(x)$, and let $\Lambda^{\eta_1,\eta_2}$ and $\pi_ r ^{\eta_1,\eta_2}$ be the \hyperlink{output5}{Outputs} of Algorithm~\ref{comparison}. Define the operators $\mathcal{P}_ r \in K[\delta_x]$ for $ r =1,2$, as well as the fields $E$ and $L$ as above, that is\begin{equation*}E  :=\mathrm{PPV}\bigl(\mathcal{P}_2/\mathrm{PPV}(\mathcal{P}_1/K)\bigr),\quad \text{and} \quad L  :=\mathrm{PPV}(\mathcal{P}_1/K)\cap\mathrm{PPV}(\mathcal{P}_2/K)\subseteq E.\end{equation*} Then, $\pi_ r ^{\eta_1,\eta_2}:\pgal(\mathcal{P}_ r /K)\twoheadrightarrow\Lambda^{\eta_1,\eta_2}$ is a surjection, and $L$ is the $\mathrm{PPV}$ extension corresponding to $\mathrm{ker}(\pi_ r ^{\eta_1,\eta_2})$ by Theorem~\ref{pgal} applied to $\mathrm{PPV}(\mathcal{P}_ r /K)$, for each $ r =1,2$.
\end{prop}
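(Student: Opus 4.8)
The plan is to reduce the entire proposition to the single claim that
\[ L = K\langle\psi\rangle_\Delta, \quad\text{where}\quad \psi := \mathcal{L}_1'(\theta_1), \]
and then read off everything else from the Galois correspondence. First I would record the easy inclusion. Since $\mathcal{L}_1',\mathcal{L}_2'\in K_0[\partial_t]$ and $K_0=K^{\delta_x}$, these operators commute with $\delta_x$, so by the defining relation \eqref{ihom-comp} used in Step~\ref{step5.8} we have $\delta_x\bigl(\mathcal{L}_1'(\theta_1)-\mathcal{L}_2'(\theta_2)\bigr)=\mathcal{L}_1'(\eta_1)-\mathcal{L}_2'(\eta_2)=\delta_xf$ for some $f\in K$. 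Because $E^{\delta_x}=K_0$, this forces $\mathcal{L}_1'(\theta_1)-\mathcal{L}_2'(\theta_2)=f+c$ for some $c\in K_0$, whence $\psi\in\mathrm{PPV}(\mathcal{P}_1/K)$ equals $\mathcal{L}_2'(\theta_2)+f+c\in\mathrm{PPV}(\mathcal{P}_2/K)$. Thus $\psi\in L$ and $K\langle\psi\rangle_\Delta\subseteq L$.

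Granting the reverse inclusion, the rest is bookkeeping. Since $\delta_x\psi=\mathcal{L}_1'(\eta_1)\in K$, the field $L=K\langle\psi\rangle_\Delta$ is a $\mathrm{PPV}$ extension of $K$ of the type treated in Proposition~\ref{alg1}, so by Theorem~\ref{pgal}(2) the restriction map $\pgal(\mathcal{P}_r/K)\twoheadrightarrow\pgal(L/K)$ is surjective with kernel $\pgal(\mathcal{P}_r/L)$ corresponding to $L$. Under the identification $\sigma\leftrightarrow b_\sigma=\sigma\theta_r-\theta_r$ of $\pgal(\mathcal{P}_r/K)$ with $\mathbb{G}_a(K_0;\mathcal{L}_r)$, this restriction sends $\sigma$ to $\sigma\psi-\psi=\mathcal{L}_r'(b_\sigma)$, which is exactly $\pi_r^{\eta_1,\eta_2}$. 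To identify the image with $\mathbb{G}_a(K_0;\mathcal{L}'')$ I would use $\mathcal{L}_1=\mathcal{L}''\mathcal{L}_1'$: for $b\in\mathrm{Sol}(\mathcal{L}_1)$ one has $\mathcal{L}''\bigl(\mathcal{L}_1'(b)\bigr)=\mathcal{L}_1(b)=0$, so the image lies in $\mathrm{Sol}(\mathcal{L}'')$, and conversely, given $w\in\mathrm{Sol}(\mathcal{L}'')$, differential closedness of $K_0$ yields $b\in K_0$ with $\mathcal{L}_1'(b)=w$, whence $\mathcal{L}_1(b)=\mathcal{L}''(w)=0$. Thus $\pgal(L/K)\cong\mathbb{G}_a(K_0;\mathcal{L}'')=\Lambda^{\eta_1,\eta_2}$, and both $\pi_r^{\eta_1,\eta_2}$ are the asserted surjections with the asserted kernels; the case $\omega=\ell_1$ of Step~\ref{step5.7}, where $\mathcal{L}''=1$, corresponds to $L=K$ and trivial $\Lambda$.

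The main obstacle is the reverse inclusion $L\subseteq K\langle\psi\rangle_\Delta$, and this is where Kolchin--Ostrowski (Theorem~\ref{kolostro}) is essential. I would prove it by a dimension count in the tower $K\subseteq\mathrm{PPV}(\mathcal{P}_1/K)\subseteq E$. The top group $\pgal\bigl(E/\mathrm{PPV}(\mathcal{P}_1/K)\bigr)$ is the $\mathrm{PPV}$ group of the primitive $\theta_2$ over $\mathrm{PPV}(\mathcal{P}_1/K)$, hence equals $\mathbb{G}_a(K_0;\mathcal{L}_2')$: indeed $\mathcal{L}_2'(\eta_2)=\delta_x(\psi-f)\in\delta_x\mathrm{PPV}(\mathcal{P}_1/K)$, and $\mathcal{L}_2'$ is of least order with this property, the point being that Kolchin--Ostrowski writes any competing solution as $\mathcal{C}(\theta_1)+g$ with $\mathcal{C}\in K_0[\partial_t]$, $g\in K$, thereby converting a lower-order solution into a relation $(\mathcal{C},\hat{\mathcal{L}})\in R:=\{(\mathcal{C},\mathcal{D}):\mathcal{C}(\eta_1)-\mathcal{D}(\eta_2)\in\delta_xK\}$ forbidden by the minimality built into the algorithm. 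Additivity of $C$-dimension along the tower then gives $\dim_C\pgal(E/K)=\ell_1+(\omega-\nu)$, and the fibered-product identity $\pgal(E/K)\cong\pgal(\mathcal{P}_1/K)\times_\Lambda\pgal(\mathcal{P}_2/K)$ yields $\dim_C\pgal(L/K)=\ell_1+\ell_2-\dim_C\pgal(E/K)=\ell_1-\omega$. Consequently $\dim_C\pgal(\mathcal{P}_1/L)=\ell_1-(\ell_1-\omega)=\omega=\dim_C\mathbb{G}_a(K_0;\mathcal{L}_1')$. Since $\pgal(\mathcal{P}_1/L)\subseteq\pgal(\mathcal{P}_1/K\langle\psi\rangle_\Delta)=\mathbb{G}_a(K_0;\mathcal{L}_1')$ (an element fixes $\psi$ iff $\mathcal{L}_1'(b_\sigma)=0$) and both are solution spaces of equal finite $C$-dimension by Theorem~\ref{dag1}, they coincide, so $L=K\langle\psi\rangle_\Delta$ by the Galois correspondence.

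The delicate technical point inside that last step --- the one I expect to demand the most care --- is justifying the order bounds hard-wired into the algorithm: that $\mathcal{L}_1'$ really generates the left ideal $\{\mathcal{C}:(\mathcal{C},\mathcal{D})\in R\}$, that the paired $\mathcal{L}_2'$ found under the constraint $\mathrm{ord}(\mathcal{L}_N^{(2)})\le N-\nu$ is genuinely of least order $\omega-\nu$ over $\mathrm{PPV}(\mathcal{P}_1/K)$, and that the minimization over $N$ in Step~\ref{step5.6} therefore lands on the correct generator. These are precisely the facts that equate $\mathrm{ord}(\mathcal{L}_2')$ with $\dim_C\pgal\bigl(E/\mathrm{PPV}(\mathcal{P}_1/K)\bigr)$ and rule out lower-order relations. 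I would establish them by the module-theoretic analysis of $R$ over the left principal ideal domain $K_0[\partial_t]$, working in the quotient $K/\delta_xK$ with its induced $\partial_t$-action, where $\mathcal{L}_r$ is the annihilator of the class of $\eta_r$ and the intersection $M_1\cap M_2$ is the cyclic module generated by the class of $\psi$; the needed order identities then follow from comparing orders of generators and greatest common left divisors.
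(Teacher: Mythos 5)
Your outer structure is fine: the inclusion $K\langle\psi\rangle_\Delta\subseteq L$ and the bookkeeping that converts the equality $L=K\langle\psi\rangle_\Delta$ into the statements about $\pi_r^{\eta_1,\eta_2}$ and $\Lambda^{\eta_1,\eta_2}$ are correct, and Kolchin--Ostrowski is indeed the right external tool. The gap is in the one step carrying all the weight: your claim that $\pgal\bigl(E/\mathrm{PPV}(\mathcal{P}_1/K)\bigr)=\mathbb{G}_a(K_0;\mathcal{L}_2')$. Your justification is that a lower-order $\hat{\mathcal{L}}$ with $\hat{\mathcal{L}}(\eta_2)\in\delta_x\mathrm{PPV}(\mathcal{P}_1/K)$ would, via Theorem~\ref{kolostro}, yield a pair $(\mathcal{C},\hat{\mathcal{L}})\in R$ ``forbidden by the minimality built into the algorithm.'' It is not forbidden. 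Kolchin--Ostrowski bounds $\mathrm{ord}(\mathcal{C})$ only by $\ell_1-1$, whereas Step~\ref{step5.6} of Algorithm~\ref{comparison} minimizes over pairs satisfying the \emph{coupled} bounds $\mathrm{ord}(\mathcal{C})\le N$ and $\mathrm{ord}(\mathcal{D})\le N-\nu$: a pair with $\mathrm{ord}(\hat{\mathcal{L}})<\omega-\nu$ but $\mathrm{ord}(\mathcal{C})\ge\omega$ is perfectly consistent with the minimality of $\omega$, so no contradiction follows. Right-dividing $\mathcal{C}$ by $\mathcal{L}_1'$ does not repair this: it only shows $\hat{\mathcal{L}}\in K_0[\partial_t]\mathcal{L}_2'+K_0[\partial_t]\mathcal{L}_2$, and excluding low-order elements of that left ideal amounts to knowing that $\mathcal{L}_2'$ right-divides $\mathcal{L}_2$ --- which, like the existence of $\mathcal{L}''$ with $\mathcal{L}_1=\mathcal{L}''\mathcal{L}_1'$ that both your bookkeeping and the last line of your dimension count invoke (well-definedness of Step~\ref{step5.8}), is part of what must be proved. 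Deferring all of this to an unexecuted module-theoretic analysis of $R$ does not discharge it: generic left-PID facts do give that the minimal generator of $\{\mathcal{C}:(\mathcal{C},\mathcal{D})\in R\ \text{for some}\ \mathcal{D}\}$ right-divides $\mathcal{L}_1$, but they do not give the crucial coupled statement that this same generator admits a partner $\mathcal{D}$ of order exactly $\mathrm{ord}(\mathcal{C})-\nu$; that statement is essentially the proposition itself, not a lemma you may quote.

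The paper closes precisely this gap by a Galois-theoretic argument, and this is where your proposal and the paper's proof genuinely diverge. Starting from a Kolchin--Ostrowski relation whose $\theta_1$-part $\tilde{\mathcal{L}}_1'$ has minimal order $\tilde\omega$, the paper applies every $\sigma\in\pgal(E/K)$ to the relation and uses the surjectivity of both restriction maps (Theorem~\ref{pgal}) to conclude that the two homomorphisms $\sigma\mapsto\tilde{\mathcal{L}}_r'(\sigma\theta_r-\theta_r)$ have the \emph{same} image in $\mathbb{G}_a(K_0)$; minimality of $\tilde\omega$ makes $\tilde{\mathcal{L}}_1'$ the generator of the left ideal of operators sending $\theta_1$ into $L$, whence $\tilde{\mathcal{L}}''\tilde{\mathcal{L}}_1'=\mathcal{L}_1$ and the common image has dimension $\ell_1-\tilde\omega$; computing the kernel on the $\mathcal{P}_2$ side then forces $\mathrm{ord}(\tilde{\mathcal{L}}_2')=\tilde\omega-\nu$. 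That is exactly the statement that the true minimal relation is a solution of \eqref{ihom-comp} with $N=\tilde\omega$, hence $\omega=\tilde\omega$ --- the fact your claim C1 encodes and your dimension count needs before it can start. In short: right skeleton, correct identification of the delicate point, but the delicate point is asserted rather than proved, and the sketch you offer for it would not suffice as stated.
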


\begin{proof} Let $\theta_ r \in E$ such that $\delta_x\theta_ r =\eta_ r $. Then, $\mathrm{PPV}(\mathcal{P}_ r /K)=K\langle\theta_ r \rangle_\Delta$, and since $\delta_x\partial_t^n\theta_ r =\partial_t^n\eta_ r \in K$, it is enough to take derivatives with respect to $\partial_t$ only; that is, $\mathrm{PPV}(\mathcal{P}_ r /K)=K\langle\theta_ r \rangle_{\partial_t}$. If we define $\ell_ r $ as in Step~\ref{step5.4}, Lemma~\ref{singer1} implies that \vspace{-.125in} \begin{equation*} \mathrm{PPV}(\mathcal{P}_ r /K)=K\langle\theta_ r \rangle_{\partial_t}=K(\theta_ r ,\partial_t\theta_ r ,\dots,\partial_t^{\ell_ r -1}\theta_ r ),\end{equation*} and the transcendence degree of $\mathrm{PPV}(\mathcal{P}_r/K)$ over $K$ is $\ell_r$. 

The map $\sigma\mapsto(\sigma\theta_r-\theta_r)$ identifies $\pgal(\mathcal{P}_r/K)$ with $\mathbb{G}_a(K_0;\mathcal{L}_r)$, for each $r=1,2$. By Theorem~\ref{pgal}, there is a unique $\partial_t$-algebraic subgroup $H_r\leq\pgal(\mathcal{P}_r/K)$ such that $L$ is the fixed field of $H_r$; that is, $L$ is precisely the set of elements $a\in\mathrm{PPV}(\mathcal{P}_r/K)$ such that $\sigma a =a$ for every $\sigma\in H_r$. On the other hand, if $\mathcal{L},\mathcal{L}'\in K_0[\partial_t]$ are such that $\mathbb{G}_a(K_0;\mathcal{L}')\subseteq\mathbb{G}_a(K_0;\mathcal{L})$, then there exists $\mathcal{L}''\in K_0[\partial_t]$ such that $\mathcal{L}=\mathcal{L}''\mathcal{L}'$, and $\mathbb{G}_a(K_0;\mathcal{L})/\mathbb{G}_a(K_0;\mathcal{L}')\simeq\mathbb{G}_a(K_0;\mathcal{L}'')$ (see \cite[p.45]{vanderput-singer:2003}). If $\mathcal{L}_r'\in K_0[\partial_t]$ is a right-hand factor of $\mathcal{L}_r$, the fixed field of $\mathbb{G}_a(K_0;\mathcal{L}_r')$ is precisely $K\langle\mathcal{L}_r'(\theta_r)\rangle_\Delta$. Therefore, there is a right-hand factor $\mathcal{L}_r'$ of $\mathcal{L}_r$ such that $L=K\langle \mathcal{L}_r'(\theta_r)\rangle_\Delta$.

We remark that every element $a\in L\backslash K$ defines an algebraic relation, as follows: since $a\in\mathrm{PPV}(\mathcal{P}_1/K)$, there is a rational function $\Phi\in K(Y_1,\dots ,Y_{\ell_1})$ such that $a=\Phi(\theta_1,\dots ,\partial_t^{\ell_1-1}\theta_t)$. Similarly, there is a rational function $\Psi\in K(Y_1,\dots ,Y_{\ell_2})$ such that $a=\Psi(\theta_2,\dots , \partial_t^{\ell_2-1}\theta_2)$. Setting these two expressions for $a$ equal to each other and clearing denominators, we obtain that $L\neq K$ if and only if there is a nontrivial $K$-algebraic relation amongst the elements  $\theta_1,\theta_2,\partial_t\theta_1,\partial_t\theta_2,\dots , \partial_t^{\ell_1-1}\theta_1, \partial_t^{\ell_2-1}\theta_2\in E$.

By Definition~\ref{ppv-def}, we have that $E^{\delta_x}=K_0=K^{\delta_x}$, and we may apply Theorem~\ref{kolostro} with \begin{equation*} \mathfrak{f}_j:=\begin{dcases}\partial_t^j\theta_1 & \text{for}\  0\leq j\leq \ell_1-1, \ \text{and} \\ \partial_t^{j-\ell_1}\theta_2 & \text{for}\  \ell_1\leq j\leq \ell_1+\ell_2-1.  \end{dcases}\end{equation*} Therefore, if $L\neq K$, we have that there exist elements $\tilde{a}_0,\dots \tilde{a}_{\ell_1-1}, \tilde{b}_0,\dots , \tilde{b}_{\ell_2-1}\in K_0$, not all zero, such that \begin{equation}\label{kolostro-2}\sum_{i=0}^{\ell_1-1}\tilde{a}_i\partial_t^{i}\theta_1-\sum_{j=0}^{\ell_2-1}\tilde{b}_j\partial_t^j\theta_2\in K.\end{equation} There is a nonnegative integer ${\tilde{\omega}}\in\mathbb{N}$ which is minimal with respect to the property that~\eqref{kolostro-2} holds for some $\tilde{a}_i,\tilde{b}_j\in K_0$, with $\tilde{a}_i=0$ for all $i>{\tilde{\omega}}$, and $\tilde{a}_{\tilde{\omega}}\neq 0$, and if we impose the further condition that $\tilde{a}_{\tilde{\omega}}=1$, this determines $\tilde{a}_0,\dots,\tilde{a}_{\tilde{\omega}},\tilde{b}_0,\dots,\tilde{b}_{\ell_2-1}$ uniquely. The minimality of $\tilde{\omega}$ implies that $L$ is $\Delta$-generated over $K$ by $\sum_i\tilde{a}_i\partial_t^i\theta_1$ (and therefore also by $\sum_j\tilde{b}_j\partial_t^j\theta_2$).

We claim that $\omega=\tilde{\omega}$, where $\omega$ is defined as in Step~\ref{step5.6}. Note that this holds if and only if $a_i=\tilde{a}_i$, and $b_j=\tilde{b}_j$, where $a_i,b_j\in\overline{C(t)}$ are the elements found in Step~\ref{step5.7}. For definiteness, let us set \begin{equation}\label{kolostro-3}\sum_{i=0}^{{\tilde{\omega}}}\tilde{a}_i\partial_t^{i}\theta_1-\sum_{j=0}^{\ell_2-1}\tilde{b}_j\partial_t^{j}\theta_2=\tilde{f}\in K,\end{equation} where $a_{\tilde{\omega}}=1$. If we define $\tilde{\mathcal{L}}_1':=\sum_{i=0}^{\tilde{\omega}} \tilde{a}_i\partial_t^i$, $\tilde{\mathcal{L}}_2':=\sum_{j=0}^{\ell_2-1}\tilde{b}_j\partial_t^j$, and \begin{align*} \pi_ r :\pgal(\mathcal{P}_ r /K)  \longrightarrow  \ & \mathbb{G}_a(K_0) \\ \phantomarrow{XXXX}{\sigma} &\tilde{\mathcal{L}}_ r '(\sigma\theta_ r -\theta_ r )\end{align*} for $ r =1,2$, we have that $\pi_1\bigl(\pgal(\mathcal{P}_1/K)\bigr)=\pi_2\bigl(\pgal(\mathcal{P}_2/K)\bigr)$, as differential algebraic subgroups of $\mathbb{G}_a(K_0)$. If we let $\tilde{\mathcal{L}}''\in K_0[\partial_t]$ be the unique monic operator such that $\pi_1\bigl(\pgal(\mathcal{P}_1/K)\bigr)=\mathbb{G}_a(K_0;\tilde{\mathcal{L}}'')$, it follows that $\tilde{\mathcal{L}}''\tilde{\mathcal{L}}_ 1 '=\mathcal{L}_ 1$, and therefore $\ell_1-{\tilde{\omega}}=\mathrm{ord}(\tilde{\mathcal{L}}'')=\ell_2-\mathrm{ord}(\tilde{\mathcal{L}}_2')$, or in other words \begin{equation*}  \mathrm{ord}(\tilde{\mathcal{L}}_2')={\tilde{\omega}}-(\ell_1-\ell_2). \end{equation*} Therefore, $\tilde{b}_j=0$ for all $j>{\tilde{\omega}}-\nu$, and $\tilde{b}_{{\tilde{\omega}}-\nu}\neq 0$, where $\nu:=\ell_1-\ell_2$, as defined in Step~\ref{step5.4}. This implies that, if we apply $\delta_x$ on both sides of the equality in~\eqref{kolostro-3}, we obtain that $\tilde{a}_0,\dots,\tilde{a}_{\tilde{\omega}},\tilde{b}_0\dots,\tilde{b}_{{\tilde{\omega}}-\nu}$, together with the coefficients of a base-$d$ expansion of $\tilde{f}$, define a solution for the system of equations defined by~\eqref{ihom-comp}, with ${\tilde{\omega}}=N$. Therefore, $\omega={\tilde{\omega}}$ and we obtain our result. \end{proof}

 %----------------------------------------------------------------------------------

\section{Algorithms for second order equations}\label{order2}
 
Let $K=K_0(x)$ be the $\Delta$-field defined at the beginning of \S\ref{order1}. In this section, we will describe an algorithm to compute the $\mathrm{PPV}$ group corresponding to a parameterized differential equation of the form \begin{equation}\label{original}\delta_x^2Y+r_1\delta_xY+r_2Y=0,\end{equation} with $r_i\in C(x,t)$ for $0\leq i\leq 2$. After performing a \emph{change of variables} (see \cite[Ex. 1.35 (5)]{vanderput-singer:2003} or \cite[p. 5]{kovacic:1986}), we may put this equation in the following form \vspace{-.1in} \begin{equation}\label{eq} \delta_x^2Y-rY=0,\vspace{-.05in}\end{equation} where $r:=\tfrac{1}{4}r_1^2+\tfrac{1}{2}\delta_xr_1-r_2$. 

The advantage of performing this change of variables is that, if $\mathcal{D}:=\delta_x^2-r$, then $\gal(\mathcal{D}/K)$ is an algebraic subgroup of $\mathrm{SL_2}(K_0)$ (and therefore $\pgal(\mathcal{D}/K)$ is a differential algebraic subgroup of $\mathrm{SL}_2(K_0)$), so that there are less possible candidates for (parameterized) Picard-Vessiot groups to consider. For the time being, we shall limit our attention to equations of the form~\eqref{eq}. In \S\ref{recover}, we will show how to express the $\mathrm{PPV}$ group of the original equation in terms of the $\mathrm{PPV}$ group of~\eqref{eq}.
 
 \subsection{Kovacic's algorithm}
 
 In \cite{kovacic:1986}, Kovacic describes an algorithm which: (i)~finds all Liouvillian solutions of~\eqref{eq}, whenever such solutions exist; and (ii)~computes $\gal(\mathcal{D}/K)$ from the data of these solutions (or their nonexistence). Kovacic's algorithm proceeds by first determining whether the Riccati equation \cite[\S4.1]{vanderput-singer:2003} for the operator $\mathcal{D}$, which is given by \vspace{-.125in}\begin{equation*}R_\mathcal{D}(U):=\delta_xU+U^2-r=0,\end{equation*} has a solution $u\in \bar{K}$. If such a $u$ exists, a computation shows that $\mathcal{D}$ factors over $K(u)[\delta_x]$ into first order factors, as $\mathcal{D}=(\delta_x+u)\circ(\delta_x-u)$ \cite[Ex. 1.36 (1b)]{vanderput-singer:2003}. Therefore, $\gal\smash{\bigl(\mathcal{D}/K(u)\bigr)}$ is reducible \cite[Prop. 2.1]{compoint-singer:1999}.
 
 \begin{defn}
 We denote the \emph{upper triangular} and \emph{infinite dihedral} subgroups of $\mathrm{SL_2}(K_0)$ by $\mathrm{UT}(K_0)$ and $\mathbf{D}_\infty(K_0)$, respectively. They are defined as \begin{equation*}\mathrm{UT}(K_0):=\left\{\begin{pmatrix}a & b \\0 & a^{-1}\end{pmatrix}\  \middle| \  a\in K_0^\times,b\in K_0\right\}\qquad\text{and}\qquad\ \mathbf{D}_\infty(K_0):=  \left\{\begin{pmatrix}  a & 0 \\ 0 & a^{-1} \end{pmatrix}, \begin{pmatrix} 0 & a \\ -a^{-1} & 0 \end{pmatrix}\  \middle|\  a\in K_0^\times\right\}.\end{equation*}
 \end{defn}
 
 For a proof of a significantly sharper version of the following result, see \cite[\S4.3.4]{vanderput-singer:2003} or Kovacic's original paper \cite[\S1.2]{kovacic:1986}. We have only included those parts which are relevant for our purposes.
 
 \begin{thm}[(Cases of Kovacic's algorithm)] \label{kov-cases}
 Kovacic's algorithm falls into four main cases:
 \begin{asparaenum}
 
 \item If $R_\mathcal{D}(u)=0$ for some $u\in K$, then $\gal(\mathcal{D}/K)$ is conjugate to a subgroup of $\mathrm{UT}(K_0)$.
 
 \item If $R_\mathcal{D}(u)=0$ for some $u\in F\backslash K$, with $F$ a quadratic extension of $K$, then $\gal(\mathcal{D}/K)$ is conjugate to a subgroup of $\mathbf{D}_\infty (K_0)$.
 
 \item If Cases (1) and (2) do not hold, and $R_\mathcal{D}(u)=0$ for some $u\in\bar{K}$, then $\gal(\mathcal{D}/K)$ is finite.
 
 \item If Cases (1), (2) and (3) do not hold, then $\gal(\mathcal{D}/K)=\mathrm{SL}_2(K_0)$.
 
 \end{asparaenum} 
 \end{thm}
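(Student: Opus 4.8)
The plan is to translate the four dichotomies of the theorem into statements about the finite orbits of $G:=\gal(\mathcal{D}/K)$ acting on the projective line $\mathbb{P}(V)\cong\mathbb{P}^1(K_0)$, where $V:=\mathrm{Sol}(\mathcal{D})$ is the two-dimensional $K_0$-vector space of solutions and $G\subseteq\mathrm{SL}_2(K_0)$. The bridge between the two pictures is the logarithmic-derivative dictionary: a nonzero $y\in V$ satisfies $\mathcal{D}y=0$ if and only if $u:=\delta_xy/y$ is a solution of the Riccati equation $R_\mathcal{D}(U)=0$, and (because every solution of $\mathcal{D}Y=0$ already lies in the $K_0$-span $V\subseteq\mathrm{PV}(\mathcal{D}/K)=:L$) every Riccati solution in $\bar K$ arises in this way. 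First I would establish the bookkeeping identity at the heart of the argument. Since $L^{\delta_x}=K_0$, for $\sigma,\tau\in G$ one has $\sigma(u)=\tau(u)$ precisely when $\sigma y/\tau y$ is a constant, i.e.\ when $\sigma$ and $\tau$ carry the line $[y]$ to the same point of $\mathbb{P}(V)$; thus $\sigma[y]\mapsto\sigma(u)$ is a $G$-equivariant bijection from the orbit $G\cdot[y]$ onto the set of $G$-conjugates of $u$. Because $L^G=K$, the product $\prod_{\sigma\in G\cdot u}(T-\sigma u)$ has coefficients in $K$, so $u\in\bar K$ exactly when this orbit is finite, and in that case $[K(u):K]=\lvert G\cdot[y]\rvert$.

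Granting this dictionary, cases (1) and (2) are immediate. A Riccati solution $u\in K$ corresponds to a $G$-fixed line in $\mathbb{P}(V)$; choosing a $K_0$-basis of $V$ whose first vector spans this line triangularizes $G$, and membership in $\mathrm{SL}_2$ forces the diagonal entries to be mutually inverse, so $G$ is conjugate into $\mathrm{UT}(K_0)$. A Riccati solution $u\in F\setminus K$ with $[F:K]=2$ corresponds to an orbit $G\cdot[y]$ of size exactly $2$; this pair of lines is $G$-invariant, and taking the two lines to be the coordinate axes forces every element of $G$ to be diagonal or anti-diagonal, i.e.\ $G$ is conjugate into $\mathbf{D}_\infty(K_0)$. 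This is the orbit-level manifestation of the reducibility of $\gal(\mathcal{D}/K(u))$ recorded above.

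For cases (3) and (4) I would invoke the classification of connected algebraic subgroups of $\mathrm{SL}_2(K_0)$: up to conjugacy they are the trivial group, $\mathbb{G}_a$, $\mathbb{G}_m$, the Borel subgroup $B$, and $\mathrm{SL}_2$ itself, with fixed-point behaviour on $\mathbb{P}^1$ given respectively by one fixed line (for $\mathbb{G}_a$ and $B$), exactly two fixed lines (for $\mathbb{G}_m$), and transitivity with no finite orbit (for $\mathrm{SL}_2$). Since $G^\circ$ is normal in $G$, the group $G$ permutes the finite fixed-point set of $G^\circ$ on $\mathbb{P}^1(K_0)$: if $G^\circ\in\{\mathbb{G}_a,B\}$ then $G$ fixes a line and we are in case (1), while if $G^\circ=\mathbb{G}_m$ then $G$ preserves a two-element set and we are in case (1) or (2). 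Hence, when neither (1) nor (2) holds, $G^\circ$ is either trivial or all of $\mathrm{SL}_2$. Under the hypothesis of (3) there is a Riccati solution in $\bar K$, hence a finite $G$-orbit on $\mathbb{P}^1$, which the transitivity of $\mathrm{SL}_2$ forbids; so $G^\circ$ is trivial and $G$ is finite. Under the hypothesis of (4) there is no Riccati solution in $\bar K$, hence no finite $G$-orbit at all; since a finite group would have only finite orbits, $G$ is infinite, leaving $G^\circ=\mathrm{SL}_2$ and therefore $G=\mathrm{SL}_2(K_0)$.

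The main obstacle is the dictionary of the first paragraph, specifically the identity $[K(u):K]=\lvert G\cdot[y]\rvert$ together with the equivalence of $u\in\bar K$ and finiteness of the orbit; this is exactly where the Picard--Vessiot Galois correspondence, the orbit--stabilizer relation, and the constant-field identities $L^{\delta_x}=K_0$ and $L^G=K$ must be combined with care. Once it is in hand, the remainder is the purely group-theoretic classification of connected subgroups of $\mathrm{SL}_2$ and their fixed points, which over the algebraically closed $K_0$ keeps every line $K_0$-rational and so avoids any descent subtleties.
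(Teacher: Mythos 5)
The paper itself does not prove this theorem: it is imported from Kovacic's paper and from van der Put--Singer (the text explicitly refers to \cite[\S 1.2]{kovacic:1986} and \cite[\S 4.3.4]{vanderput-singer:2003} for a sharper version), so your argument is to be measured against that standard proof, which is indeed the route you are reconstructing. Your group-theoretic half is sound: the list of connected algebraic subgroups of $\mathrm{SL}_2(K_0)$ up to conjugacy (trivial, $\mathbb{G}_a$, $\mathbb{G}_m$, Borel, $\mathrm{SL}_2$), the fact that $G$ permutes the fixed-point set of the normal subgroup $G^\circ$ on $\mathbb{P}^1(K_0)$, and the resulting elimination argument for cases (3) and (4) are all correct, and they legitimately use that $K_0$ is algebraically closed (it is differentially closed, hence algebraically closed).

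The genuine gap is in the dictionary, at precisely the step you assert parenthetically and then build everything on: that every Riccati solution $u\in\bar{K}$ is of the form $\delta_x y/y$ for some $y$ in the solution space $V\subseteq L:=\mathrm{PV}(\mathcal{D}/K)$. The reason you give --- ``every solution of $\mathcal{D}Y=0$ already lies in $V$'' --- presupposes what must be proved, namely that the \emph{first-order} equation $\delta_x Y=uY$ has a nonzero solution in some $\delta_x$-field extension of $L$ with constants still $K_0$. Without that, there is nothing to place in $V$: in an extension with new constants $C'\supsetneq K_0$ the solution space of $\mathcal{D}$ is the $C'$-span of $V$, a solution of $\delta_x Y=uY$ found there need not lie in $V$ (its line need not even be $K_0$-rational), and then none of the hypotheses (1)--(3) can be converted into statements about $G$-orbits, so all four cases collapse. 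The standard repair is the ingredient the paper records in the sentence immediately preceding the theorem: since $R_\mathcal{D}(u)=0$, the operator factors over $K(u)$ as $\mathcal{D}=(\delta_x+u)\circ(\delta_x-u)$. Form the compositum $L(u)$ inside an algebraic closure of $L$; it is algebraic over $L$, so its $\delta_x$-constants are algebraic over $K_0$ and hence equal $K_0$, and consequently the solution space of $\mathcal{D}$ in $L(u)$ is still exactly $V$. Now $(\delta_x-u)$ maps $V$ into the solution space of $(\delta_x+u)$ in $L(u)$, which has $K_0$-dimension at most $1$; by rank--nullity, $\mathrm{ker}(\delta_x-u)\cap V\neq 0$, and any nonzero element of this kernel is the required $y\in V$ with $\delta_x y=uy$. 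Once this lemma is inserted, your orbit-size identity $[K(u):K]=\lvert G\cdot[y]\rvert$ (via $L^G=K$ and $L^{\delta_x}=K_0$) and the remainder of the argument go through as written.
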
 

\begin{thm}[(Dreyfus {\cite[Prop. 9]{dreyfus:2011}})]
Suppose that $\gal(\mathcal{D}/K)=\mathrm{SL}_2(K_0)$. Then, $\pgal(\mathcal{D}/K)$ is conjugate to $\mathrm{SL}_2(C)$ if and only if the following inhomogeneous equation has a solution in $K$:\begin{equation}\label{dreyfus} \delta_x^3Y -4r\delta_xY-2(\delta_xr)Y=-2\partial_tr.\vspace{-.05in}\end{equation} 
\end{thm}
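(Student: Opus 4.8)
The plan is to recast both sides of the equivalence in terms of an integrable connection in the parameter direction, and to bridge them using the parameterized Galois correspondence (Theorem~\ref{pgal}) together with Cassidy's dichotomy (Theorem~\ref{sl2}). Fix a fundamental solution matrix $\mathfrak{Y}\in\mathrm{SL}_2\bigl(\mathrm{PPV}(\mathcal{D}/K)\bigr)$ for \eqref{eq}, normalized so that its Wronskian is $1$, whence $\delta_x\mathfrak{Y}=A_x\mathfrak{Y}$ with $A_x:=\left(\begin{smallmatrix}0&1\\ r&0\end{smallmatrix}\right)$; for $\sigma\in\pgal(\mathcal{D}/K)$ write $\sigma\mathfrak{Y}=\mathfrak{Y}B_\sigma$, with $B_\sigma\in\mathrm{SL}_2(K_0)$, for the embedding $\pgal(\mathcal{D}/K)\hookrightarrow\mathrm{SL}_2(K_0)$. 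First I would record a purely computational equivalence: writing $A_t=\left(\begin{smallmatrix}a&b\\ c&-a\end{smallmatrix}\right)\in\mathfrak{sl}_2(K)$ and imposing the integrability condition $\delta_xA_t-\partial_tA_x=[A_x,A_t]$, the $(1,2)$- and $(1,1)$-entries force $a=-\tfrac12\delta_xb$ and $c=-\tfrac12\delta_x^2b+rb$, while the $(2,1)$-entry reduces to exactly $\delta_x^3b-4r\delta_xb-2(\delta_xr)b=-2\partial_tr$. (This is no accident: $\delta_x^3-4r\delta_x-2(\delta_xr)$ is the symmetric square of $\mathcal{D}$.) Hence \eqref{dreyfus} has a solution $b\in K$ if and only if there is a trace-free $A_t\in\mathfrak{sl}_2(K)$ satisfying the integrability condition with $A_x$.

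For the implication that \eqref{dreyfus} solvable $\Rightarrow\pgal(\mathcal{D}/K)$ conjugate to $\mathrm{SL}_2(C)$, I would start from such an $A_t$ and form $W:=\partial_t\mathfrak{Y}-A_t\mathfrak{Y}$. A short computation using integrability gives $\delta_xW=A_xW$, so each column of $W$ solves $\delta_xZ=A_xZ$; since $\mathrm{PPV}(\mathcal{D}/K)^{\delta_x}=K_0$, this yields $W=\mathfrak{Y}M$ with $M\in\mathrm{Mat}_2(K_0)$. Invoking the differential closedness of $K_0$, I would solve the auxiliary linear system $\partial_tR=-MR$ by an invertible $R\in\mathrm{GL}_2(K_0)$; setting $P:=R^{-1}$ and $\hat{\mathfrak{Y}}:=\mathfrak{Y}P^{-1}$, one checks $\delta_x\hat{\mathfrak{Y}}=A_x\hat{\mathfrak{Y}}$ (as $\delta_xP=0$) and $\partial_t\hat{\mathfrak{Y}}=A_t\hat{\mathfrak{Y}}$. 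Then, applying $\sigma$ to $\partial_t\hat{\mathfrak{Y}}=A_t\hat{\mathfrak{Y}}$ and using $\sigma(A_t)=A_t$, the relation $\sigma\hat{\mathfrak{Y}}=\hat{\mathfrak{Y}}\hat B_\sigma$ forces $\partial_t\hat B_\sigma=0$, i.e. $\hat B_\sigma\in\mathrm{SL}_2(K_0^{\partial_t})=\mathrm{SL}_2(C)$. Thus $\pgal(\mathcal{D}/K)$ is conjugate to a subgroup of $\mathrm{SL}_2(C)$, hence properly contained in $\mathrm{SL}_2(K_0)$; being Zariski-dense in $\gal(\mathcal{D}/K)=\mathrm{SL}_2(K_0)$ by Theorem~\ref{zariski}, Theorem~\ref{sl2} forces it to be conjugate to $\mathrm{SL}_2(C)$.

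For the converse, I would choose $P\in\mathrm{SL}_2(K_0)$ (normalizing the determinant, which is possible as differentially closed fields are algebraically closed) with $P\,\pgal(\mathcal{D}/K)\,P^{-1}=\mathrm{SL}_2(C)$, set $\hat{\mathfrak{Y}}:=\mathfrak{Y}P^{-1}$ so that $\sigma\hat{\mathfrak{Y}}=\hat{\mathfrak{Y}}\hat B_\sigma$ with $\hat B_\sigma\in\mathrm{SL}_2(C)$, and define $A_t:=(\partial_t\hat{\mathfrak{Y}})\hat{\mathfrak{Y}}^{-1}$. Applying $\sigma$ and using $\partial_t\hat B_\sigma=0$ gives $\sigma(A_t)=A_t$ for every $\sigma$, so by the parameterized Galois correspondence (Theorem~\ref{pgal}), whose maximal subgroup corresponds to the fixed field $K$, the entries of $A_t$ lie in $K$. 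The determinant normalization makes $A_t\in\mathfrak{sl}_2(K)$, and the integrability condition holds automatically since $\delta_x$ and $\partial_t$ commute on $\hat{\mathfrak{Y}}$. By the computation of the first paragraph, the $(1,2)$-entry of $A_t$ is then a solution of \eqref{dreyfus} in $K$.

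The two genuinely non-formal points, which I expect to be the main obstacles, are: in the forward direction, the descent of $A_t$ to $\mathrm{Mat}_2(K)$, which relies on the Galois correspondence together with $K_0^{\partial_t}=C$; and in the backward direction, the upgrade from mere compatibility of $A_t$ to the exact relation $\partial_t\hat{\mathfrak{Y}}=A_t\hat{\mathfrak{Y}}$, which is precisely where differential closedness of $K_0$ enters (to solve $\partial_tR=-MR$ in $\mathrm{GL}_2(K_0)$). The remaining steps are routine manipulations with fundamental matrices and the symmetric-square identity.
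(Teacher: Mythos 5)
Your proof is correct, and there is in fact nothing in the paper to compare it against: the paper states this theorem as a quotation of Dreyfus \cite[Prop.~9]{dreyfus:2011} and supplies no proof of its own. The meaningful comparison is therefore with Dreyfus's cited argument, which --- as this paper's introduction explains --- is carried out over a \emph{universal} field of principal constants and invokes Seidenberg's theorem to realize solutions as meromorphic functions on a polydisk, so that constancy of the representing matrices can be read off from an analytic (isomonodromy) picture. Your route is purely algebraic and valid verbatim in the paper's setting ($K_0$ differentially closed, $K_0^{\partial_t}=C$). The entrywise computation identifying solvability of \eqref{dreyfus} in $K$ with the existence of a trace-free $A_t\in\mathfrak{sl}_2(K)$ compatible with $A_x$ is right (and the symmetric-square remark is accurate); the construction $W:=\partial_t\mathfrak{Y}-A_t\mathfrak{Y}$, the descent $W=\mathfrak{Y}M$ with $M\in\mathrm{Mat}_2(K_0)$ via the Wronskian argument, and the appeal to differential closedness to solve $\partial_tR=-MR$ with $R$ invertible correctly produce a fundamental matrix satisfying both systems, whence $\partial_t$-constant representing matrices; conversely, the Galois correspondence (Theorem~\ref{pgal}) correctly descends $A_t$ to $\mathrm{Mat}_2(K)$, and your determinant normalization is exactly what forces $\mathrm{tr}(A_t)=\partial_t\bigl(\det\hat{\mathfrak{Y}}\bigr)/\det\hat{\mathfrak{Y}}=0$, which is needed before the first-paragraph computation applies; finally, upgrading ``contained in a conjugate of $\mathrm{SL}_2(C)$'' to ``conjugate to $\mathrm{SL}_2(C)$'' via Zariski density (Theorem~\ref{zariski}) and Cassidy's dichotomy (Theorem~\ref{sl2}) is the right way to close the loop, since $\mathrm{SL}_2(C)$ is Kolchin-closed but not Zariski-closed. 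In effect you have reproved, algebraically, the integrability criterion in this special case --- precisely the kind of service the paper performs for Dreyfus's other results inside the proofs of Propositions~\ref{borel-proof} and \ref{dihedral-proof}, but deliberately leaves to a citation here. What the analytic route buys is conceptual transparency (parameter-independent monodromy); what yours buys is independence from Seidenberg's theorem and the universal-field hypothesis, i.e., a self-contained proof in exactly the setting where the paper uses the statement.
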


The following result is a corollary of Theorem \ref{zariski}.
 
\begin{thm} \label{finite-gps} $\gal(\mathcal{D}/K)$ is finite if and only if $\pgal(\mathcal{D}/K)$ is finite. In this case, $\pgal(\mathcal{D}/K)=\gal(\mathcal{D}/K)$.
\end{thm}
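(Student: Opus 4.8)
The plan is to deduce everything from Theorem~\ref{zariski}, which asserts that $\pgal(\mathcal{D}/K)$ is Zariski-dense in $\gal(\mathcal{D}/K)$, together with the inclusion $\pgal(\mathcal{D}/K)\subseteq\gal(\mathcal{D}/K)$ coming from the restriction monomorphism in the Remark following Theorem~\ref{pgal}. The single fact that makes the corollary work is that a finite subset of $\mathrm{GL}_2(K_0)\subseteq K_0^{2\times 2}$ is Zariski-closed, being a finite union of points; consequently, whenever either group is finite, that finiteness forces the relevant set to coincide with its own Zariski closure.

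First I would treat the implication that $\gal(\mathcal{D}/K)$ finite $\Rightarrow$ $\pgal(\mathcal{D}/K)$ finite. If $\gal(\mathcal{D}/K)$ is finite, then every subset of it is finite, hence Zariski-closed; in particular $\pgal(\mathcal{D}/K)$ is Zariski-closed and therefore equals its own Zariski closure. But by Theorem~\ref{zariski} that closure is all of $\gal(\mathcal{D}/K)$, so $\pgal(\mathcal{D}/K)=\gal(\mathcal{D}/K)$, which is finite.

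Conversely, suppose $\pgal(\mathcal{D}/K)$ is finite. Then $\pgal(\mathcal{D}/K)$ is itself a finite, hence Zariski-closed, subset of $\gal(\mathcal{D}/K)$, so its Zariski closure is again itself. Invoking Theorem~\ref{zariski} once more, this closure is $\gal(\mathcal{D}/K)$, whence $\gal(\mathcal{D}/K)=\pgal(\mathcal{D}/K)$ is finite. In both directions the same computation yields the final assertion $\pgal(\mathcal{D}/K)=\gal(\mathcal{D}/K)$ as an equality of sets, and hence of groups.

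There is really no hard step here: the content is entirely carried by Theorem~\ref{zariski}, and the only thing to verify is the elementary observation that a finite subset is Zariski-closed and that a closed dense subset must be the whole space. If I wanted to be maximally careful I would spell out that $\gal(\mathcal{D}/K)$ being finite as an abstract group is the same as being a zero-dimensional algebraic group over the algebraically closed field $K_0$, and note that Zariski-closed subsets are \emph{a fortiori} Kolchin-closed, so that no difficulty arises from passing between the two topologies; but these are routine remarks rather than genuine obstacles.
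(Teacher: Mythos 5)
Your proof is correct and is exactly the argument the paper intends: the paper states Theorem~\ref{finite-gps} as an immediate corollary of the Zariski-density result (Theorem~\ref{zariski}), and your writeup simply makes explicit the observation that finite subsets are Zariski-closed, so density forces $\pgal(\mathcal{D}/K)=\gal(\mathcal{D}/K)$ whenever either group is finite. No gap; nothing further is needed.
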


One can show that \eqref{dreyfus} has a solution in $K$ if and only if it has a solution in $\overline{C(t)}(x)$, and there is an algorithm to decide whether or not this holds \cite[Prop. 4.1 and Rmk. 4.5 (2)]{vanderput-singer:2003}. Therefore, Theorems~\ref{sl2}, \ref{dreyfus} and \ref{finite-gps} completely solve the problem of determining $\pgal(\mathcal{D}/K)$ if $\gal(\mathcal{D}/K)$ is finite or if ${\gal(\mathcal{D}/K)=\mathrm{SL}_2(K_0)}$.

We shall now restrict our attention to cases $(1)$ and $(2)$ of Kovacic's algorithm. From now on, we will assume that $\pgal(\mathcal{D}/K)$ is infinite, unless explicitly stated otherwise. Our strategy is similar to the one followed in \cite{dreyfus:2011}, and the arguments given there were of great help to us in developing ours. Since we do not have an interpretation for the solutions for our differential equations in terms of path integrals, we cannot directly apply these results in our context. We will give differential-algebraic versions of the arguments presented in \cite[\S4.1, \S4.2]{dreyfus:2011}, in order to prove that our Algorithms \ref{borel} and \ref{dihedral} are correct. In cases (1) and (2) of Kovacic's algorithm, we use the solutions for \eqref{eq} to define \emph{semi-invariants} of the linear differential algebraic group $\pgal(\mathcal{D}/K)$, in the sense of \cite[II,\S7]{cassidy:1972}. In the non-parameterized setting, this approach has led to generalizations and simplifications of Kovacic's algorithm which work over more general fields;\footnote{Kovacic's original algorithm assumes that the ground field $K$ is $\mathbb{C}(x)$.} see for example \cite{vanhoeij-weil:1997,ulmer-weil:2000}. See also \cite{vanhoeij:1997} for an efficient algorithm to compute first order right-hand factors of $\mathcal{D}$.
 
 %------------------------------------------------------------------

\subsection{Upper triangular case}
Suppose that $R_\mathcal{D}(U)=0$ has a solution $u\in K$. One can find such a $u\in \overline{C(t)}(x)$ explicitly (see \cite{ulmer-weil:2000,vanhoeij:1997} and \cite[Prop. 4.9]{vanderput-singer:2003}).

% ALGORITHM 3 -----------------------------------

\begin{algo}[(Upper triangular)]\label{borel}$\hphantom{X}$

\begin{center}
\begin{boxedminipage}{10.7cm}\textbf{Input:} $u\in \overline{C(t)}(x)$.

\hypertarget{output3}{\textbf{Output:}} A differential algebraic group $\mathrm{UT}(K_0;A^u,B^u)$, defined by \begin{equation*}\mathrm{UT}(K_0;A^u,B^u):=\left\{\begin{pmatrix} a & b \\ 0 & a^{-1}\end{pmatrix}\ \middle|\ a\in A^u, b\in B^u\right\},\end{equation*} and such that $\pgal(\mathcal{D}/K)\simeq\mathrm{UT}(K_0;A^u,B^u)$.\end{boxedminipage}
\end{center}

\begin{asparaenum}[\bfseries {Step} 1:]

\item \label{step3.1}Apply Kovacic's algorithm to compute $\gal(\mathcal{D}/K)$. If it is found that \begin{equation*} \gal(\mathcal{D}/K)\simeq \left\{\begin{pmatrix} a & b \\ 0 & a^{-1} \end{pmatrix}\  \middle| \  a\in \mu_n, b\in K_0\right\} \end{equation*} for some $n\in\mathbb{N}$, set $A^u:=\mu_n$ (recall $\mu_n$ denotes the group of $n^\text{th}$ roots of unity in $C^\times$) and go to Step~\ref{step3.4}. 

Otherwise, go to Step~\ref{step3.2}. 

\item \label{step3.2} Apply Algorithm~\ref{rational} with input $\eta:=\partial_tu$, let $\mathcal{L}^\eta$ be its \hyperlink{output1}{Output}, and set \begin{equation*}A^u:=\mathbb{G}_m(K_0;\mathcal{L}^\eta).\end{equation*}

\item \label{step3.3} If $\mathcal{L}^\eta\neq 1$, let \begin{equation*}\smash{B^u:=\begin{cases} 0 & \text{if} \  \gal(\mathcal{D}/K)\simeq\mathbb{G}_m(K_0); \\ \mathbb{G}_a(K_0) & \text{otherwise},\end{cases}}\end{equation*} and go to \hyperlink{output3}{Output}.

If $\mathcal{L}^\eta=1$, proceed to Step~\ref{step3.4}. 

\item \label{step3.4}Find $v\in \overline{C(t)}(x)$ such that $\delta_xv=\partial_tu$. Such a $v$ exists whenever $A^u\subseteq\mathbb{G}_m(C)$.

\item \label{step3.5}Apply Algorithm~\ref{exponential} with inputs $p:=-2u$ and $q:=-2v$, and let $\mathcal{L}^{p,q}$ be its \hyperlink{output2}{Output}. Set \begin{equation*}B^u:=\mathbb{G}_a(K_0;\mathcal{L}^{p,q}),\vspace{-.07in}\end{equation*} and go to \hyperlink{output3}{Output}.

\end{asparaenum}
\end{algo}

%------------------------------

\begin{prop}[(Algorithm~\ref{borel} is correct)] \label{borel-proof}Let $u\in \overline{C(t)}(x)$ such that $R_\mathcal{D}(u)=0$. Then, \begin{equation*}\pgal(\mathcal{D}/K)\simeq\mathrm{UT}(K_0;A^u,B^u).\end{equation*}\end{prop}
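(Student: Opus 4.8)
The plan is to work in the basis of $\mathrm{Sol}(\mathcal{D})$ furnished by the factorization $\mathcal{D}=(\delta_x+u)\circ(\delta_x-u)$. Write $y_1$ for the exponential solution with $\delta_xy_1=uy_1$ and $y_2:=y_1\int y_1^{-2}$ for the second solution obtained by reduction of order; then $\{y_1,y_2\}$ is a $K_0$-basis of $\mathrm{Sol}(\mathcal{D})$, and every $\sigma\in\pgal(\mathcal{D}/K)$ satisfies $\sigma y_1=a_\sigma y_1$ and $\sigma y_2=b_\sigma y_1+a_\sigma^{-1}y_2$ with $a_\sigma=\sigma y_1/y_1\in K_0^\times$, so $\sigma$ acts by $\left(\begin{smallmatrix} a_\sigma & b_\sigma \\ 0 & a_\sigma^{-1}\end{smallmatrix}\right)$. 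Setting $A:=\{a_\sigma\}$ and $B:=\{b_\sigma\mid a_\sigma=1\}$, the statement reduces to two tasks: identifying $A$ and $B$ with the outputs $A^u$ and $B^u$, and verifying that $\pgal(\mathcal{D}/K)$ is (conjugate to) the full group $\mathrm{UT}(K_0;A,B)$ rather than a proper twisted subgroup.

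To compute the torus part $A$, I would introduce $\theta:=\partial_ty_1/y_1$, which satisfies $\delta_x\theta=\partial_tu\in K$ and $\sigma\theta-\theta=\partial_ta_\sigma/a_\sigma$. Thus $K\langle\theta\rangle_\Delta$ is a $\mathrm{PPV}$ extension of $K$ for the operator $\mathcal{P}$ associated to $\delta_xY=\partial_tu$, normal inside $\mathrm{PPV}(\mathcal{D}/K)$, and the Galois correspondence (Theorem~\ref{pgal}) gives a surjection $\pgal(\mathcal{D}/K)\twoheadrightarrow\pgal(\mathcal{P}/K)=\mathbb{G}_a(K_0;\mathcal{L}^\eta)$, where $\mathcal{L}^\eta$ is the output of Step~\ref{step3.2} (Algorithm~\ref{rational} applied to $\partial_tu$, correct by Proposition~\ref{alg1}) and $\sigma\mapsto\partial_ta_\sigma/a_\sigma$. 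Hence the logarithmic-derivative image of $A$ is exactly $\mathbb{G}_a(K_0;\mathcal{L}^\eta)$. If Kovacic's algorithm (Step~\ref{step3.1}) returns a finite torus $\mu_n$, then $A\subseteq\mu_n$ is Zariski-dense and so $A=\mu_n$. Otherwise $A$ is a Zariski-dense differential algebraic subgroup of $\mathbb{G}_m(K_0)$, so by Cassidy's classification (Theorem~\ref{dag2}) it equals $\mathbb{G}_m(K_0;\mathcal{L}')$ for a unique monic $\mathcal{L}'$; comparing logarithmic-derivative images and using the uniqueness of the defining operator forces $\mathcal{L}'=\mathcal{L}^\eta$, so $A=A^u$. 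Note $\mathcal{L}^\eta=1$ holds precisely when $A=\mathbb{G}_m(C)$.

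For the unipotent part $B$, conjugating a unipotent element by a general element of $\pgal(\mathcal{D}/K)$ shows $B$ is stable under multiplication by $a^2$ for every $a\in A$. When $A$ is Zariski-dense and infinite (Step~\ref{step3.3}, $\mathcal{L}^\eta\neq1$) it contains non-constant elements, so $B$ is stable under a Zariski-dense group of homotheties; the only differential algebraic subgroups of $\mathbb{G}_a(K_0)$ with this property are $0$ and $\mathbb{G}_a(K_0)$, and which one occurs is detected by the unipotent part of the Zariski closure $\gal(\mathcal{D}/K)$ computed by Kovacic ($B=0$ iff $\gal(\mathcal{D}/K)\simeq\mathbb{G}_m$, else $B=\mathbb{G}_a(K_0)$). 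When $A=\mathbb{G}_m(C)$ (Steps~\ref{step3.4}--\ref{step3.5}, $\mathcal{L}^\eta=1$, or the $\mu_n$ branch), $\theta$ is rational and equal to the $v$ of Step~\ref{step3.4}, so $\zeta:=y_1^{-2}$ has rational logarithmic derivatives $p=-2u$ and $q=-2v$, satisfying the integrability condition. The quotient $z:=y_2/y_1$ then satisfies $\delta_xz=\zeta$, and $F'\langle z\rangle_\Delta=\mathrm{PPV}(\mathcal{P}'/F')$ for $\mathcal{P}'=\delta_x^2-p\delta_x$ and $F'=K(y_1^2)$. On the unipotent part each $\sigma$ acts by $z\mapsto z+b_\sigma$, and since $\pgal(\mathcal{D}/F')$ restricts surjectively onto $\pgal(\mathcal{P}'/F')=\mathbb{G}_a(K_0;\mathcal{L}^{p,q})$ (Proposition~\ref{alg2}), while $B$ has index at most $2$ in this image and $\mathbb{G}_a(K_0;\mathcal{L}^{p,q})$ has no proper finite-index subgroups, I conclude $B=B^u$.

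Finally I would assemble the product structure: $\pgal(\mathcal{D}/K)\subseteq\mathrm{UT}(K_0;A,B)$ is automatic, and for the reverse inclusion one uses that in each case $B$ is stable under multiplication by $A$ (trivially when $A=\mathbb{G}_m(C)$ acts on the $C$-vector space $B$, or when $B\in\{0,\mathbb{G}_a(K_0)\}$), together with the vanishing of the relevant cocycle, so that after conjugating by a constant unipotent matrix $\pgal(\mathcal{D}/K)$ becomes all of $\mathrm{UT}(K_0;A,B)$. I expect the main obstacle to be the unipotent computation when $A=\mathbb{G}_m(C)$: pinning down the correct first-order exponential equation (with the rational primitive $v$ in place of $\theta$), matching its $\mathrm{PPV}$ group to $B$ through the Galois correspondence, and ruling out a twisted product. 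The conceptual crux is recognizing that the rationality of $\partial_ty_1/y_1$---equivalently $\mathcal{L}^\eta=1$---is exactly what decides whether Algorithm~\ref{exponential} can be applied, which is why the computation of $B^u$ must branch.
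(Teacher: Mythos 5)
Your proposal is correct and takes essentially the same route as the paper's proof: the same Kovacic basis and embedding into $\mathrm{UT}(K_0)$, the diagonal part obtained by applying Proposition~\ref{alg1} to $\eta=\partial_tu$ via $\theta=\partial_ty_1/y_1$ (with Theorem~\ref{zariski} handling the finite case), and the unipotent part split into the same two cases, settled respectively by Lemma~\ref{sit} plus Zariski-density and by Proposition~\ref{alg2} with $p=-2u$, $q=-2v$. Your deviations are refinements of, not departures from, that argument: the index-two/divisibility step makes explicit the identification---used silently in the paper---of the unipotent group over $K\langle y_1\rangle_\Delta$ with the group $\mathbb{G}_a(K_0;\mathcal{L}^{p,q})$ that Proposition~\ref{alg2} computes over the possibly smaller field $K(y_1^2)$, while your closing appeal to ``vanishing of the relevant cocycle'' is asserted rather than proved, which is no worse than the paper, which asserts $\pgal(\mathcal{D}/K)\simeq\mathrm{UT}(K_0;A,B)$ outright.
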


\begin{proof}[Proof (cf. {\cite[\S2.2]{dreyfus:2011}})]There is a basis $\{y_1,y_2\}$ of $\mathrm{Sol}(\mathcal{D})\subset\mathrm{PPV}(\mathcal{D}/K)$ such that $\delta_xy_1=uy_1$ and $\delta_x(\frac{y_2}{y_1})=y_1^{-2}$ \cite[\S1.3]{kovacic:1986}. This choice of basis identifies $\pgal(\mathcal{D}/K)$ with a subgroup of $\mathrm{UT}(K_0)$, as follows: for $\sigma\in\pgal(\mathcal{D}/K)$, define $a_\sigma:=\frac{\sigma y_1}{y_1}$, and $b_\sigma:=\frac{\sigma y_2}{y_1}-\frac{y_2}{\sigma y_1}$. A computation shows that $a_\sigma\in K_0^\times$,  $b_\sigma\in K_0$, and \begin{equation*}\sigma\mapsto\begin{pmatrix} a_\sigma & b_\sigma \\ 0 & a_\sigma^{-1}\end{pmatrix}\end{equation*} is the embedding $\pgal(\mathcal{D}/K)\hookrightarrow\mathrm{SL}_2(K_0)$ determined by the choice of basis $\{y_1,y_2\}$ of $\mathrm{Sol}(\mathcal{D})$.
 
Moreover, the map $\chi:\pgal(\mathcal{D}/K)\rightarrow\mathbb{G}_m(K_0):\sigma\mapsto a_\sigma$ is a differential-algebraic homomorphism (so its kernel is Kolchin-closed). The $\mathrm{PPV}$~extension corresponding to $\mathrm{ker}(\chi)$ by Theorem~\ref{pgal} is $F:=K\langle y_1\rangle_\Delta$, which is a $\mathrm{PPV}$~extension of $K$ for the operator $\delta_x-u$. The induced character $\chi:\pgal((\delta_x-u)/K)\rightarrow\mathbb{G}_m(K_0)$ is now injective, and its image is a differential algebraic subgroup $A$ of $\mathbb{G}_m(K_0)$. If $\sigma\in\mathrm{ker}(\chi)$, then $b_\sigma=\sigma(\frac{y_2}{y_1})-\frac{y_2}{y_1}$, so the map $\sigma\mapsto b_\sigma$ identifies $\pgal(\mathcal{D}/F)$ with a differential algebraic subgroup $B$ of $\mathbb{G}_a(K_0)$, and we have that \begin{equation*}\pgal(\mathcal{D}/K)\simeq \mathrm{UT}(K_0;A,B)=\left\{\begin{pmatrix} a & b \\ 0 & a^{-1}\end{pmatrix}\  \middle|\ a\in A, b\in B\right\}.\vspace{-.06in}\end{equation*} We will show that $A=A^u$ and $B=B^u$.
 
First we calculate $A$. If $\pgal(\mathcal{D}/K)$ is infinite, Theorem~\ref{zariski} implies that $A=\mu_n$ if and only if \begin{equation*} \gal(\mathcal{D}/K)\simeq \left\{\begin{pmatrix}  a & b \\ 0 & a^{-1}  \end{pmatrix} \ \middle|\ a\in \mu_n,\ b\in K_0\right\}.\end{equation*} If $A$ is infinite, a computation shows that $\sigma\big(\tfrac{\partial_ty_1}{y_1}\big)=\tfrac{\partial_ty_1}{y_1}+\tfrac{\partial_ta_\sigma}{a_\sigma}$ and $\delta_x\big(\tfrac{\partial_ty_1}{y_1}\big)=\partial_tu$. Applying Proposition~\ref{alg1} with $\eta=\partial_tu$ and $\theta=\tfrac{\partial_ty_1}{y_1}$, we deduce that $A=A^u$.
 
In order to calculate the operator defining $B$, we have to consider the possibilities $A\subseteq\mathbb{G}_m(C)$ and $A\nsubseteq\mathbb{G}_m(C)$ separately. The following result is proved in \cite{sit:1975} under different hypotheses, but the proof works in our setting as well.
 
 \begin{lem}\label{sit}
Let $B$ be a nontrivial proper differential algebraic subgroup of $\mathbb{G}_a(K_0)$, and let $a\in\mathbb{G}_m(K_0)$. Then, $B=aB:=\{ab\ | \ b\in B\}$ if and only if $\partial_ta=0$.
 \end{lem}
 
 \begin{proof}[Proof of Lemma]
 Let $\mathcal{L}\in K_0[\partial_t]$ be the unique monic operator defining $B$. We have that $\mathcal{L}\neq 0$ and $\mathrm{ord}(\mathcal{L})\geq 1$. If $a\in\mathbb{G}_m(K_0)$, the monic operator defining $aB$ is $\mathcal{L}':=a^{-1}\circ\mathcal{L}\circ a$. If $\partial_ta\neq 0$, then $\mathcal{L}\neq\mathcal{L}'$, so $B\neq aB$. \end{proof}
 
 \emph{Case 1:} $A\nsubseteq\mathbb{G}_m(C)$. The group $A$ acts on $B$ via \begin{equation*} \begin{pmatrix} a & 0 \\ 0 & a^{-1}  \end{pmatrix}\begin{pmatrix} 1& b \\  0 & 1 \end{pmatrix}\begin{pmatrix} a^{-1} & 0 \\ 0 & a  \end{pmatrix}=\begin{pmatrix} 1 & a^2b \\ 0 & 1  \end{pmatrix} .\end{equation*} By Lemma~\ref{sit}, in this case $B$ is either 0 or $\mathbb{G}_a(K_0)$, (because $a\in C\Leftrightarrow a^2\in C$). By Theorem~\ref{zariski}, $B=0$ if and only if $\gal(\mathcal{D}/K)\simeq\mathbb{G}_m(K_0)$, and we have that $B=B^u$ in this case.
 
 \emph{Case 2:} $A\subseteq\mathbb{G}_m(C)$. In this case, it is shown in \cite{cassidy-singer:2006} that there exists $v\in K$ such that $\partial_ty_1=vy_1$ and $\delta_xv=\partial_tu$. Note that this implies that $\delta_x\bigl(y_1^{-2}\bigr)=-2uy_1^{-2}$ and $\partial_t\bigl(y_1^{-2}\bigr)=-2vy_1^{-2}$. Since \begin{equation*}\mathrm{PPV}(\mathcal{D}/F)=F\langle y_2\rangle_\Delta=F\big\langle\tfrac{y_2}{y_1}\big\rangle_\Delta\end{equation*} and $\delta_x\bigl(\frac{y_2}{y_1}\bigr)=y_1^{-2}$, we apply Proposition~\ref{alg2} with $\eta=y_1^{-2}$ and $\theta=\frac{y_2}{y_1}$ to deduce that $B=B^u$ in this case also.\end{proof}
 
 %------------------------------------------------------------------
 
 \subsection{Infinite dihedral case}
Suppose that $R_\mathcal{D}(U)=0$ does not have a solution in $K$, but $R_\mathcal{D}(u)=0$ for some $u$ in a quadratic extension of $K$. In this case, one can find $\phi\in\overline{C(t)}(x)$ (see \cite{ulmer-weil:2000, singer:1981}, \cite[Prop. 4.24]{vanderput-singer:2003}) such that the minimal polynomial of $u$ over $K$ is: \vspace{-.125in}\begin{equation*} U^2 +\phi U +\left(\tfrac{1}{2}\delta_x\phi +\tfrac{1}{2}\phi^2-r\right)=0. \end{equation*}
 
 %% ALGORITHM 4 --------------------------------
 
 \begin{algo}[(Infinite dihedral)]\label{dihedral}$\hphantom{X}$
\begin{center}
\begin{boxedminipage}{10.2cm}\textbf{Input:} $\phi\in \overline{C(t)}(x)$.

\hypertarget{output4}{\textbf{Output:}} A differential algebraic group $\mathbf{D}_\infty(K_0;A^\phi)$, defined by \begin{equation*}\mathbf{D}_\infty(K_0;A^\phi):= \left\{\begin{pmatrix}  a & 0 \\ 0 & a^{-1} \end{pmatrix}, \begin{pmatrix} 0 & a \\ -a^{-1} & 0 \end{pmatrix}\  \middle|\  a\in A^\phi\right\},\end{equation*} and such that $\pgal(\mathcal{D}/K)\simeq\mathbf{D}_\infty(K_0;A^\phi).$\end{boxedminipage}
\end{center}

 \begin{asparaenum}[\bfseries {Step} 1:]

 \item \label{step4.1}Set \vspace{-.12in}\begin{equation*}  \eta:=\tfrac{1}{2}\partial_t\sqrt{4r-2\delta_x\phi-\phi^2}. \end{equation*}
 
 \item \label{step4.2}Apply Algorithm~\ref{exponential} with inputs $p:=\tfrac{\delta_x\eta}{\eta}$ and $q:=\tfrac{\partial_t\eta}{\eta}$, and let $\mathcal{L}^{p,q}$ be the \hyperlink{output2}{Output}. 
 
 \item \label{step4.3}Set $A^\phi:=\mathbb{G}_m(K_0;\mathcal{L}^{p,q})$, and go to \hyperlink{output4}{Output}.
\end{asparaenum}
\end{algo}

%------------------------------

 \begin{prop}[(Algorithm~\ref{dihedral} is correct)]\label{dihedral-proof}
 Let $u\in\overline{C(x,t)}$ and $\phi\in\overline{C(t)}(x)$, such that $R_\mathcal{D}(u)=0$ and the minimal polynomial of $u$ over $K$ is given by \vspace{-.06in}\begin{equation*} U^2+\phi U+ \left(\tfrac{1}{2}\delta_x\phi +\tfrac{1}{2}\phi^2-r\right).\vspace{-.06in}\end{equation*} Then, $\pgal(\mathcal{D}/K)\simeq\mathbf{D}_\infty(K_0;A^\phi)$. \end{prop}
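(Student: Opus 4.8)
The plan is to diagonalize in a basis adapted to the two Riccati solutions, reduce the computation of $\pgal(\mathcal{D}/K)$ to that of its diagonal torus, compute the torus with Algorithm~\ref{exponential}, and finally account for the anti-diagonal coset up to a diagonal change of basis.

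First I would fix the conjugate root $\bar u$ of the given minimal polynomial, so that $u+\bar u=-\phi$ and $w:=u-\bar u=\sqrt{4r-2\delta_x\phi-\phi^2}$ has $w^2\in\overline{C(t)}(x)$, and set $\tilde K:=K(w)=K(u)$. A computation (cf. \cite[\S4.2]{dreyfus:2011}) produces a basis $\{y_1,y_2\}$ of $\mathrm{Sol}(\mathcal{D})$ with $\delta_xy_1=uy_1$ and $\delta_xy_2=\bar uy_2$, realizing $\pgal(\mathcal{D}/K)$ inside $\mathbf{D}_\infty(K_0)$: since the Wronskian $W=-w\,y_1y_2\in K_0$ is fixed, an element fixing $\tilde K$ acts as $\mathrm{diag}(a_\sigma,a_\sigma^{-1})$ with $a_\sigma:=\sigma y_1/y_1$, while an element moving $u$ to $\bar u$ swaps the two eigenlines and acts as $\begin{pmatrix}0&a\\-a^{-1}&0\end{pmatrix}$. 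As we are in Case~(2) of Kovacic's algorithm (Theorem~\ref{kov-cases}), the anti-diagonal part is nonempty, so $T:=\pgal(\mathcal{D}/\tilde K)$ is the diagonal torus, of index two. I would then record that $y_1y_2=-W/w\in\tilde K$, so that $\mathrm{PPV}(\mathcal{D}/\tilde K)=\tilde K\langle y_1\rangle_\Delta$ is the $\mathrm{PPV}$ extension of $\delta_xY=uY$, and that $\chi\colon\sigma\mapsto a_\sigma$ embeds $T$ into $\mathbb{G}_m(K_0)$.

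To compute $T$, I would use that $\pgal(\mathcal{D}/K)$ is infinite, hence so is $T$, so Theorem~\ref{dag2} gives $T=\mathbb{G}_m(K_0;\mathcal{L})$ for the unique monic $\mathcal{L}$ whose solution space is the $\mathbb{G}_a$-group $\{\partial_ta_\sigma/a_\sigma:\sigma\in T\}$, where $\partial_ta_\sigma/a_\sigma=\sigma\ell_1-\ell_1$ and $\ell_1:=\partial_ty_1/y_1$. Setting $\kappa:=\tfrac14\partial_t(w^2)/w^2\in K$ and $\theta:=\tfrac12\,\partial_t(y_1/y_2)/(y_1/y_2)$, one checks $\delta_x\theta=\tfrac12\partial_tw=\eta$ and $\ell_1=\theta-\kappa$; as $\kappa\in K$ is $\sigma$-fixed, $\sigma\ell_1-\ell_1=\sigma\theta-\theta$. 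The identity $\eta=\kappa w$ (with $\kappa,w^2\in\overline{C(t)}(x)$) shows that the inputs $p=\delta_x\eta/\eta$ and $q=\partial_t\eta/\eta$ of Step~\ref{step4.2} lie in $\overline{C(t)}(x)$ and satisfy $\partial_tp=\delta_xq$, so Algorithm~\ref{exponential} applies and, by Proposition~\ref{alg2}, $\mathbb{G}_a(K_0;\mathcal{L}^{p,q})=\{\sigma\theta-\theta:\sigma\in\pgal(\mathcal{P}/\tilde K)\}$. Since $\tilde K\langle\theta\rangle_\Delta\subseteq\mathrm{PPV}(\mathcal{D}/\tilde K)$ is a $\mathrm{PPV}$ subextension, the restriction $T\twoheadrightarrow\pgal(\mathcal{P}/\tilde K)$ is surjective (Theorem~\ref{pgal}) and respects $\sigma\mapsto\sigma\theta-\theta$, so the two $\mathbb{G}_a$-groups coincide; thus $\mathcal{L}=\mathcal{L}^{p,q}$ and $T\simeq A^\phi=\mathbb{G}_m(K_0;\mathcal{L}^{p,q})$. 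This reproves algebraically the relevant step of \cite{dreyfus:2011}. (In the degenerate case $\eta=0$, i.e. $w^2$ is $\partial_t$-constant, $\mathcal{L}^{p,q}=1$ and $A^\phi=\mathbb{G}_m(C)$, consistent with $T$ being constant.)

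The remaining point — recovering the full group from the torus — is where I expect the main obstacle. Fixing one anti-diagonal element $\tau_0=\begin{pmatrix}0&a_0\\-a_0^{-1}&0\end{pmatrix}\in\pgal(\mathcal{D}/K)$, every anti-diagonal element is $\tau_0\sigma$ with $\sigma\in T$, so the anti-diagonal parameters form the coset $a_0A^\phi$, and there is no a priori reason for $a_0$ itself to lie in $A^\phi$: the direct attempt to show $\mathcal{L}^{p,q}(\partial_ta_0/a_0)=0$ turns out to be circular. The resolution is that the proposition only asserts an isomorphism. Conjugating the matrix realization by $\mathrm{diag}(s,s^{-1})$ — equivalently, replacing the basis $\{y_1,y_2\}$ by $\{sy_1,s^{-1}y_2\}$ for $s\in K_0^\times$ — fixes the torus and multiplies the anti-diagonal coset by $s^2$; because $K_0$ is differentially closed, hence algebraically closed, every element of $K_0^\times$ is a square, so I can pick $s$ with $s^2\in a_0^{-1}A^\phi$. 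In the new basis the anti-diagonal coset becomes $A^\phi$, the torus is unchanged, and $\pgal(\mathcal{D}/K)$ is realized exactly as $\mathbf{D}_\infty(K_0;A^\phi)$ (which is a group since $-1\in A^\phi$), yielding $\pgal(\mathcal{D}/K)\simeq\mathbf{D}_\infty(K_0;A^\phi)$. The difficulty here is thus conceptual rather than computational: the anti-diagonal parameter is only well defined modulo the torus, and the claimed isomorphism must be realized after a diagonal change of basis.
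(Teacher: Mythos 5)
Your proof is correct, and its skeleton is the same as the paper's: realize $\pgal(\mathcal{D}/K)$ inside $\mathbf{D}_\infty(K_0)$ via an eigenbasis for the two Riccati roots, identify the index-two diagonal torus with the $\mathrm{PPV}$ group of a first order problem that Algorithm~\ref{exponential} can handle (with $\eta=\tfrac{1}{2}\partial_t w$, where $w^2=4r-2\delta_x\phi-\phi^2$), and then account for the anti-diagonal coset. Where you differ is in the two technical devices, and both of your substitutes are sound. First, to reduce the torus computation to Proposition~\ref{alg2}, the paper uses $\theta=\partial_ty_1/y_1+\tfrac{1}{2}\omega$, where $\omega\in K$ with $\delta_x\omega=\partial_t\phi$ is built from Kovacic's structural fact that $\phi=\sum_i c_i/(x-e_i)$ with $c_i\in\tfrac{1}{2}\mathbb{Z}$; you instead take $\theta=\tfrac{1}{2}\partial_t(y_1/y_2)/(y_1/y_2)$ and use the Wronskian identity $y_1y_2=-W/w$, with $W\in K_0$ and $w^2\in\overline{C(t)}(x)$, to see that $\ell_1-\theta$ lies in $K$ and hence is torus-fixed. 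This is cleaner and self-contained: since $\delta_xw=\phi w$ (immediate from the Riccati equation satisfied by the two roots), the element $\tfrac{1}{2}\partial_t(w^2)/w^2\in\overline{C(t)}(x)$ already satisfies $\delta_x\bigl(\tfrac{1}{2}\partial_t(w^2)/w^2\bigr)=\partial_t\phi$, so the appeal to Kovacic's description of $\phi$ is avoidable, and your argument makes that explicit. Second, for the anti-diagonal coset, the paper rescales $y_2$ so that a lift $\tau$ of the nontrivial automorphism of $K(u)/K$ acts by $\tau(y_1)=y_2$, $\tau(y_2)=-y_1$ (citing \cite[\S4.3]{kovacic:1986}), which forces the anti-diagonal parameters into $-A=A$; you conjugate by $\mathrm{diag}(s,s^{-1})$ with $s^2=a_0^{-1}$, using that $K_0$ is algebraically closed. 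These are the same maneuver (a diagonal change of basis) in different clothing; the paper's version needs no square root, yours needs no citation, and your framing correctly isolates the conceptual point that the anti-diagonal parameter is only well defined modulo the torus.

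Two small repairs, neither a genuine gap. The identity $\ell_1=\theta-\kappa$ holds only if $\partial_tW=0$; in general $\ell_1-\theta=\tfrac{1}{2}\partial_tW/W-\kappa$. Since the extra term lies in $K_0\subset K$, it is still fixed by the torus, so the statement you actually use, $\sigma\ell_1-\ell_1=\sigma\theta-\theta$, is unaffected (alternatively, rescale $y_2$ so that $W=1$). And in the degenerate case $\eta=0$, the inputs $p=\delta_x\eta/\eta$ and $q=\partial_t\eta/\eta$ of Step~\ref{step4.2} are undefined, so ``$\mathcal{L}^{p,q}=1$'' there is a convention rather than an output of Algorithm~\ref{exponential}; your parenthetical fix ($\theta$ is then a $\delta_x$-constant, so $A\subseteq\mathbb{G}_m(C)$, with equality because $A$ is infinite) is the right one, and covers a case the paper leaves implicit.
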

 
 \begin{proof}[Proof. (cf. {\cite[\S2.3]{dreyfus:2011}})] If $v$ is the other root of the minimal polynomial of $u$, one can show that there is a basis $\{y_1,y_2\}$ of $\mathrm{Sol}(\mathcal{D})$ such that $\delta_xy_1=uy_1$ and $\delta_xy_2=vy_2$. Since $K(u)\subset\mathrm{PPV}(\mathcal{D}/K)$, Theorem~\ref{pgal} gives a surjection $\pgal(\mathcal{D}/K)\twoheadrightarrow\mathrm{Gal}\bigl(K(u)/K\bigr)$. After possibly replacing $y_2$ by $ay_2$ for some $a\in K_0^\times$, one can show that $\tau(y_1):=y_2$, $\tau(y_2):=-y_1$, $\tau(u):=v$ defines a lift $\tau\in\pgal(\mathcal{D}/K)$ of the unique non-trivial $K$-automorphism of $K(u)$ \cite[\S4.3]{kovacic:1986}. For $\sigma\in\pgal(\mathcal{D}/K)$ we define $\pi(\sigma):=0$ if $\sigma(u)=u$, and $\pi(\sigma):=1$ if $\sigma(u)=v$. If we define \begin{equation*} a_\sigma:=\left(\frac{\sigma\circ\tau^{\pi(\sigma)}(y_1)}{y_1}\right),\end{equation*}a computation shows that $a_\sigma\in K_0^\times$. One can show that the embedding of $\pgal(\mathcal{D}/K)$ in $\mathrm{SL}_2(K_0)$, determined by the choice of basis $\{y_1,y_2\}$ of $\mathrm{Sol}(\mathcal{D})$, identifies $\pgal(\mathcal{D}/K)$ with a subgroup of $\mathbf{D}_\infty(K_0)$, as follows: \begin{equation*} \sigma\mapsto\begin{pmatrix} a_\sigma & 0 \\ 0 & a_\sigma^{-1}  \end{pmatrix}   \begin{pmatrix} 0 & 1 \\ -1 & 0  \end{pmatrix}^{\pi(\sigma)}. \end{equation*}
 
The group $\pgal(\mathcal{D}/K)$ has two connected components, and the normal subgroup $\pgal\bigl(\mathcal{D}/K(u)\bigr)$ is the connected component of the identity. One can show that $\sigma(y_1y_2)=\pm y_1y_2$ for every $\sigma\in\pgal(\mathcal{D}/K)$. Therefore, \begin{equation*}\mathrm{PPV}\smash{\bigl(}\mathcal{D}/K(u)\smash{\bigr)}=K(u)\langle y_1\rangle_\Delta,\end{equation*} and $\pgal\bigl(\mathcal{D}/K(u)\bigr)$ is the $\mathrm{PPV}$~group for the operator $\delta_x-u$ over $K(u)$. For $\sigma\in\pgal\smash{\bigl(}\mathcal{D}/K(u)\smash{\bigr)}$, we have that $a_\sigma=\frac{\sigma y_1}{y_1}$. If we let $A\subset\mathbb{G}_m(K_0)$ be the (isomorphic) image of $\pgal\smash{\bigl(}\mathcal{D}/K(u)\smash{\bigr)}$ under the differential-algebraic homomorphism $\sigma\mapsto a_\sigma$, we have that $\pgal(\mathcal{D}/K)\simeq\mathbf{D}_\infty(K_0;A)$ (defined as in the \hyperlink{output4}{Output} of Algorithm~\ref{dihedral}). We will show that $A=A^\phi$.
 
 As in the upper triangular case, a calculation shows that $\sigma\big(\frac{\partial_ty_1}{y_1}\big)=\frac{\partial_ty_1}{y_1}+\frac{\partial_ta_\sigma}{a_\sigma}$ and $\delta_x\big(\frac{\partial_t y_1}{y_1}\big)=\partial_tu$. Note that we cannot apply Algorithm~\ref{exponential} to compute the $\mathrm{PPV}$~group corresponding to the inhomogeneous equation $\delta_xY=\partial_tu$, because the conditions $\smash{\tfrac{\partial_t^2u}{\partial_tu}}\in K$ and $\smash{\tfrac{\delta_x\partial_tu}{\partial_tu}}\in K$ are not satisfied. However, one can show that $\phi=\sum_i \smash{\frac{c_i}{x-e_i}}$, for some $e_i\in\overline{C(t)}$ and $c_i\in\smash{\tfrac{1}{2}}\mathbb{Z}$ (see \cite[Step 3, \S4.1]{kovacic:1986}), and a computation shows that $\omega:=\sum_i\frac{c_i\partial_te_i}{x-e_i}\in K$ satisfies $\delta_x\omega=\partial_t\phi$. By making the substitution $g:=f+\smash{\tfrac{1}{2}}\omega$, we see that, for any $\mathcal{L}\in K_0[\partial_t]$, \begin{equation*} \text{there exists} \ f\in K(u) \ \text{such that} \ \mathcal{L}(\partial_tu)=\delta_x f \quad\Longleftrightarrow \quad \text{there exists} \ g\in K(u)\ \text{such that} \ \mathcal{L}\left({}\smash[t]{\partial_tu+\tfrac{1}{2}\partial_t\phi}\right)=\delta_xg.\end{equation*} Now we may apply Proposition~\ref{alg2} with $\smash{\eta=\partial_tu+\tfrac{1}{2}\partial_t\phi}$ and $\smash{\theta=\frac{\partial_ty_1}{y_1}+\tfrac{1}{2}\omega}$, because $\smash{\tfrac{\delta_x\eta}{\eta}}$ and $\smash{\tfrac{\partial_t\eta}{\eta}}$ are in $\smash{\overline{C(t)}(x)}$. This concludes the proof that $A=A^\phi$. \end{proof}

%-----------------------------------------------------------

\subsection{Recovering the original group} \label{recover}
 
At the beginning of this section, we performed a change of variables on \eqref{original} to put it in the form~\eqref{eq}. We will now indicate how to compute the $\mathrm{PPV}$ group corresponding to \eqref{original}. The corresponding problem in classical Picard-Vessiot theory could be regarded as an exercise; indeed, Kovacic \cite[\S1.1]{kovacic:1986}, as well as numerous other authors, consider the problem solved after computing the group corresponding to~\eqref{eq} only. In the parameterized situation there is a new subtlety, stemming from the fact that $\mathbb{G}_m(K_0)$ has many infinite differential algebraic subgroups (cf. Theorem~\ref{dag2}); while any proper algebraic subgroup of $\mathbb{G}_m(K_0)$ must be finite cyclic. In general, the differential-algebraic relations amongst the solutions for \eqref{eq} and the solution for the change-of-variables operator (i.e., the operator $\mathcal{E}$ defined below), form an obstruction to expressing the $\mathrm{PPV}$~group of \eqref{original} as an almost direct product (that is, a finite-index quotient of the direct product) of the $\mathrm{PPV}$~groups corresponding to these two operators. This is drastically different from the classical situation, where the $\mathrm{PV}$-group of~\eqref{original} is always an almost-direct product of the $\mathrm{PV}$-group of~\eqref{eq} and the change-of-variables group.

To fix notation, consider the differential operators \begin{equation*}\mathcal{D}:=\delta_x^2-r,\qquad \mathcal{E}:=\delta_x+\tfrac{1}{2}r_1,\qquad \text{and}\qquad\mathcal{F}:=\delta_x^2+r_1\delta_x+r_2,\end{equation*} where $r:=\tfrac{1}{4}r_1^2+\tfrac{1}{2}\delta_xr_1-r_2$. Let $E:=\mathrm{PPV}\bigl(\mathcal{E}/\mathrm{PPV}(\mathcal{D}/K)\bigr)$, and let $w\in E^\times$ such that $\mathcal{E}w=0$. In other words, $K\langle w\rangle_\Delta= \mathrm{PPV}(\mathcal{E}/K)\subset E$. Let $y_1,y_2\in\mathrm{PPV}(\mathcal{D}/K)$ be a $K_0$-basis of $\mathrm{Sol}(\mathcal{D})$. If we let $z_i:=wy_i\in E$, one can show that $\mathcal{F}z_i=0$, and since $z_1$ and $z_2$ are $K_0$-linearly independent, they form a basis of $\mathrm{Sol}(\mathcal{F})$ in $E$. Therefore, \begin{equation*} K\langle z_1,z_2\rangle_\Delta=\mathrm{PPV}(\mathcal{F}/K)\subset E=K\langle y_1,y_2,w\rangle_\Delta. \end{equation*} Moreover, a computation shows that $z_1\delta_xz_2-z_2\delta_xz_1=aw^2$ for some $a\in K_0^\times$ \cite[Ex. 1.14 (5)]{vanderput-singer:2003}, so either $E=\pgal(\mathcal{F}/K)$, or else $E$ is a quadratic extension of $\mathrm{PPV}(\mathcal{F}/K)$, generated by $w$. For later use, we define $\nu:=[E:\mathrm{PPV}(\mathcal{F}/K)]$, the algebraic degree of this field extension. One can show that $\nu=1$ if and only if $\pgal(\mathcal{E}/K)$ is finite of odd order. Now consider the $\Delta$-subfield of $E$ defined by $L:=\mathrm{PPV}(\mathcal{D}/K)\cap\mathrm{PPV}(\mathcal{E}/K)$. Since $\pgal(\mathcal{E}/K)$ is a differential algebraic subgroup of $\mathbb{G}_m(K_0)$, it is abelian, and the Kolchin-closed subgroup corresponding to $L$, as a $\Delta$-subfield of  $\mathrm{PPV}(\mathcal{E}/K)$, is normal. Therefore, $L$ is a $\mathrm{PPV}$ extension of $K$ (for some operator). If we let $\Lambda:=\pgal(L/K)$, we have a lattice of $\mathrm{PPV}$ extensions over $K$:

\vspace{-.2in}\begin{equation}  \label{lattice}\vspace{-.1in}
\qquad\xymatrix{ 
& E \ar@{-}[dl] \ar@{-}[dr] \ar@{-}[dd] \ar@/^/@{-}[drrr]^{\mu_\nu}\\
 \mathrm{PPV}(\mathcal{D}/K) \ar@{-}[dr] \ar@{-}[ddr]_(.4){\pgal(\mathcal{D}/K)} & & \mathrm{PPV}(\mathcal{E}/K) \ar@{-}[dl]  \ar@{-}[ddl]^(.4){\pgal(\mathcal{E}/K)} & & \mathrm{PPV}(\mathcal{F}/K)\ar@/^/@{-}[ddlll]^(.3136){\hphantom{XX}\pgal(\mathcal{F}/K)} \\
& L \ar@{-}[d]_(.38){\Lambda} \\ 
 & K
}
\end{equation}\vspace{.05in}

The Galois correspondence (Theorem~\ref{pgal}) implies that \begin{equation*} \pgal(E/K)\simeq\pgal(\mathcal{D}/K)\times_{\Lambda}\pgal(\mathcal{E}/K), \end{equation*}where the fibered product is taken with respect to the surjections $\varphi:\pgal(\mathcal{D}/K) \twoheadrightarrow \Lambda$ and $\psi:\pgal(\mathcal{E}/K)\twoheadrightarrow \Lambda$, induced from the lattice~\eqref{lattice}. It only remains to compute the middle term in the exact sequence \begin{equation}\label{gal-exact} 1\longrightarrow\mu_\nu\longrightarrow\pgal(\mathcal{D}/K)\times_{\Lambda}\pgal(\mathcal{E}/K)\longrightarrow\pgal(\mathcal{F}/K)\longrightarrow 1. \end{equation}

The computation of $\pgal(\mathcal{E}/K)$ is analogous to the computation of the diagonal part of $\pgal(\mathcal{D}/K)$ in the upper triangular case (cf. the proof of Proposition~\ref{borel-proof}): we first compute the $\mathrm{PV}$ group $\gal(\mathcal{E}/K)$. If it is finite, then $\pgal(\mathcal{E}/K)=\gal(\mathcal{E}/K)$. If $\gal(\mathcal{E}/K)$ is infinite, one shows that $\sigma\mapsto\frac{\sigma w}{w}=:e_\sigma$ induces an isomorphism of $\pgal(\mathcal{E}/K)$ onto a differential algebraic subgroup of $\mathbb{G}_m(K_0)$, and we have that $\smash{\sigma\bigl(\frac{\partial_t w}{w}\bigr)=\frac{\partial_tw}{w}+\frac{\partial_te_\sigma}{e_\sigma}}$ and $\delta_x\bigl(\frac{\partial_tw}{w}\bigr)=-\tfrac{1}{2}\partial_tr_1$. An application of Algorithm~\ref{rational}, with input $\eta:=-\tfrac{1}{2}\partial_tr_1$, will give $\mathcal{L}^\eta\in C(t)[\partial_t]$ such that $\pgal(\mathcal{E}/K)\simeq\mathbb{G}_m(K_0;\mathcal{L}^\eta)$. We will now compute $\Lambda$, as well as the maps $\varphi$ and $\psi$. There are two cases to consider, depending on whether $\pgal(\mathcal{E}/K)$ is finite or infinite. We will apply the following result in the case that $\pgal(\mathcal{E}/K)$ is finite.

\begin{lem} The groups $\gal(\mathcal{D}/K)$ and $\pgal(\mathcal{D}/K)$ have the same finite quotients. \end{lem}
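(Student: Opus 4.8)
The plan is to show that both the Picard--Vessiot group $G:=\gal(\mathcal{D}/K)$ and the parameterized Picard--Vessiot group $H:=\pgal(\mathcal{D}/K)$ have finite-quotient sets equal to the sets of quotients of their respective component groups, and then to identify these component groups via the restriction embedding $H\hookrightarrow G$ of Theorem~\ref{zariski}. Since $G$ is a linear algebraic group and $H$ is a linear differential algebraic group, each has an identity component, $G^\circ$ (Zariski) and $H^\circ$ (Kolchin), which is normal of finite index; and any normal subgroup of finite index contains the identity component, because a connected group admits no nontrivial finite quotient. Hence the finite quotients of $G$ are exactly the quotients of the finite group $G/G^\circ$, and likewise for $H/H^\circ$. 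It therefore suffices to prove that $H\hookrightarrow G$ induces an isomorphism $H/H^\circ\xrightarrow{\sim}G/G^\circ$.

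Surjectivity is immediate from Zariski density: the components of $G$ are the finitely many cosets of $G^\circ$, each of which is Zariski-open, so the dense subgroup $H$ meets every one of them and its image in $G/G^\circ$ is all of $G/G^\circ$. (The map is well defined because $H^\circ$ is Kolchin-connected, so its Zariski closure is connected and contains the identity, whence $H^\circ\subseteq G^\circ$.) For injectivity I must show $H\cap G^\circ=H^\circ$. Write $P:=\mathrm{PV}(\mathcal{D}/K)$ and $\tilde P:=\mathrm{PPV}(\mathcal{D}/K)$, and let $M_0:=\bar K\cap P$ and $\tilde M_0:=\bar K\cap\tilde P$ be the relative algebraic closures of $K$; these are finite Galois extensions of $K$, and they are $\Delta$-subfields since the derivations extend uniquely to algebraic extensions. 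Now $M_0$ is the fixed field of $G^\circ$, so under $H\hookrightarrow G$ the subgroup $H\cap G^\circ$ is precisely the subgroup of $H$ fixing $M_0$ pointwise, while $H^\circ$ is the subgroup fixing $\tilde M_0$. By the parameterized Galois correspondence (Theorem~\ref{pgal}) these subgroups coincide exactly when $M_0=\tilde M_0$; since $P\subseteq\tilde P$ forces $M_0\subseteq\tilde M_0$, the entire statement reduces to the inclusion $\tilde M_0\subseteq P$, i.e. every element of $\tilde P$ algebraic over $K$ already lies in $P$.

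This inclusion is the heart of the matter, and I would prove it by analysing $\tilde P$ as a $\delta_x$-extension of $P$, noting first that both fields have the same constants $K_0$. For a $K_0$-basis $\{y_1,y_2\}$ of $\mathrm{Sol}(\mathcal{D})\subseteq P$, the elements $\partial_t^j y_k$ generate $\tilde P$ over $P$, and differentiating $\delta_x^2 y_k=ry_k$ repeatedly in $\partial_t$ shows via Leibniz that each $\partial_t^j y_k$ solves an inhomogeneous equation $\delta_x^2 Y-rY=g_{j,k}$ whose forcing term $g_{j,k}$ lies in the subfield generated by the strictly lower $\partial_t$-derivatives, and whose homogeneous solution space $\mathrm{Sol}(\mathcal{D})$ is already contained in $P$. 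Consequently any $\sigma\in\gal(\tilde P/P)$ fixes each $g_{j,k}$ and sends $\partial_t^j y_k$ to $\partial_t^j y_k+(\text{an element of }K_0 y_1\oplus K_0 y_2)$; ordering the generators by $j$, the action of $\sigma$ is unipotent. Thus each finite layer $P\langle\partial_t^j y_k : j\le n\rangle_{\delta_x}$ is a $\mathrm{PV}$ extension of $P$ with unipotent --- hence connected --- Galois group, so $P$ is relatively algebraically closed in each layer and therefore in their union $\tilde P$. In particular $\tilde M_0\subseteq P$, so $\tilde M_0=M_0$, giving injectivity and the desired isomorphism $H/H^\circ\cong G/G^\circ$.

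The main obstacle is exactly this reduction to connectedness of $\gal(\tilde P/P)$: one must recognize that passing from $P$ to $\tilde P$ only adjoins ``primitive-type'' solutions of inhomogeneous equations built on $\mathcal{D}$, so that no new algebraic extension of $K$ can appear. Care is needed in bookkeeping the unipotent filtration (the $g_{j,k}$ involve all strictly lower $\partial_t$-derivatives) and in verifying that the constants do not grow, so that the Picard--Vessiot formalism applies to the infinitely generated extension $\tilde P/P$; both points are routine once the filtration is set up.
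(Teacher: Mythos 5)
Your proof is correct in substance and reaches the paper's conclusion, but it takes a genuinely more self-contained route than the paper, whose entire argument is two sentences: a citation to Cassidy for the fact that any differential-algebraic surjection onto a finite group factors through the component group, followed by the bare assertion that Theorem~\ref{zariski} yields $\pgal(\mathcal{D}/K)/\pgal(\mathcal{D}/K)^0\simeq\gal(\mathcal{D}/K)/\gal(\mathcal{D}/K)^0$. You correctly isolate what density actually gives for free: only \emph{surjectivity} of the component-group map (the cosets of $\gal(\mathcal{D}/K)^0$ are Zariski-open, so the dense subgroup meets all of them). The injectivity, i.e. $\pgal(\mathcal{D}/K)\cap\gal(\mathcal{D}/K)^0=\pgal(\mathcal{D}/K)^0$, is a genuine assertion; you reduce it, via the two Galois correspondences, to the statement that every element of $\tilde P:=\mathrm{PPV}(\mathcal{D}/K)$ algebraic over $K$ already lies in $P:=\mathrm{PV}(\mathcal{D}/K)$, and you prove that by writing $\tilde P$ as the union of the $\delta_x$-fields $P_n:=P\langle\partial_t^jy_k : j\le n\rangle_{\delta_x}$ and arguing that each $\gal(P_n/P)$ is unipotent, hence connected. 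The paper's route buys brevity by leaning on the general theory of linear differential algebraic groups (where the compatibility of Kolchin components with Zariski closure is available); yours supplies exactly the content that the word ``implies'' hides, and your intermediate result --- $P$ is relatively algebraically closed in $\tilde P$ --- is more informative than the lemma itself.

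One step, however, is false as written and needs the repair you yourself gesture at in your closing paragraph. It is not true that an arbitrary $\sigma\in\gal(\tilde P/P)$ fixes $g_{j,k}$: for $j\ge 2$ the forcing term $g_{j,k}=\sum_{i<j}\binom{j}{i}(\partial_t^{j-i}r)\,\partial_t^iy_k$ involves the lower derivatives $\partial_t^iy_k$, which lie in $P_{j-1}$ but not in $P$, so $\sigma$ need not fix them; correspondingly $\sigma(\partial_t^jy_k)-\partial_t^jy_k$ lands in the $K_0$-span of \emph{all} lower derivatives $\partial_t^iy_{k'}$, not just in $K_0y_1\oplus K_0y_2$ (take $j=2$ with $\partial_tr\neq 0$ and $\sigma$ nontrivial on $P_1$). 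The cleanest fix is to run the argument one layer at a time: an automorphism of $P_n$ over $P_{n-1}$ \emph{does} fix $g_{n,k}\in P_{n-1}$, hence sends $\partial_t^ny_k$ to $\partial_t^ny_k$ plus an element of $\mathrm{Sol}(\mathcal{D})$, so $\gal(P_n/P_{n-1})$ embeds as a Zariski-closed subgroup of $\mathrm{Sol}(\mathcal{D})^2\simeq\mathbb{G}_a^4(K_0)$; in characteristic zero such a subgroup is a $K_0$-subspace, hence connected, so $P_{n-1}$ is relatively algebraically closed in $P_n$ (the $\delta_x$-constants of every intermediate field are $K_0$, so $\mathrm{PV}$ theory applies to each layer). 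Relative algebraic closedness of $P$ in $P_n$, and then in $\tilde P$, follows by descending the tower: an element of $P_n$ algebraic over $P$ is algebraic over $P_{n-1}$, hence lies in $P_{n-1}$, and so on. (Alternatively, your global unipotence claim is true and can be proved by induction on $j$, showing that $\sigma-\mathrm{id}$ lowers the filtration by $j$; but this requires analyzing the solution space of the prolonged system, which is more bookkeeping than your text suggests --- exactly the caveat you flag.) With this repair your proof is complete.
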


\begin{proof}If $G$ is a linear differential algebraic group, we denote the connected component of the identity of $G$ by $G^0$. If $\varphi:G\twoheadrightarrow H$ is a $\partial_t$-algebraic surjection onto a finite group $H$, then $\varphi$ factors through $G\twoheadrightarrow G/G^0$ \cite[p. 906]{cassidy:1972}. Theorem~\ref{zariski} implies that $\pgal(\mathcal{D}/K)/\pgal(\mathcal{D}/K)^0\simeq\gal(\mathcal{D}/K)/\gal(\mathcal{D}/K)^0$.\end{proof}

\emph{Case 1:} Suppose that $\gal(\mathcal{E}/K)=\mu_m$ for some $m\in\mathbb{N}$, so that $\Lambda$ is also finite, the order of $\Lambda$ divides $m$, and $w^m\in K$ \cite{kolchin:1976}. We shall list all the finite cyclic quotients of the algebraic subgroups of $\mathrm{SL}_2(K_0)$, in increasing order of the largest finite cyclic group which they admit as a quotient, and examine the possibilities for $\Lambda$ in each case.

The groups $\mathbb{G}_a(K_0)$, $\mathbb{G}_m(K_0)$, $\mathrm{UT}(K_0)$, and $\mathrm{SL}_2(K_0)$ are connected, and the alternating group $A_5$ is simple, so none of these groups admit a nontrivial finite cyclic quotient. Therefore, if $\gal(\mathcal{D}/K)$ is isomorphic to any of these groups, then $\Lambda$ must be trivial, and we have that \begin{equation*}\pgal(\mathcal{F}/K)\simeq\mu_{m/\nu}\cdot\pgal(\mathcal{D}/K)\subset\mathrm{GL}_2(K_0),\end{equation*} where  $\mu_{m/\nu}\cdot\pgal(\mathcal{D}/K):=\{\zeta\cdot\sigma \ | \ \zeta\in\mu_{m/\nu},\sigma\in\pgal(\mathcal{D}/K)\}$ and $\mu_{m/\nu}\subset C^\times$ denotes the $(m/\nu)^{\text{th}}$ roots of unity.

Each of the following groups admits precisely one nontrivial finite cyclic quotient, of order~$2$: the dihedral groups $\mathbf{D}_{2n}$, the infinite dihedral group $\mathbf{D}_\infty(K_0)$, and the symmetric group $S_4$. Therefore, both $\varphi$ and $\psi$ are determined by $\Lambda$ in any of these cases. If $m$ is odd, we conclude that $\Lambda$ is trivial, and $\pgal(\mathcal{F}/K)\simeq\mu_m\cdot\pgal(\mathcal{D}/K)$. If $m$ is even, one checks whether the (unique) quadratic extension of $K$ which is contained in $\mathrm{PV}(\mathcal{D}/K)$ is (algebraically) isomorphic to $K(w^{\frac{m}{2}})$ using finite Galois theory. This holds if and only if $\Lambda=\mu_2$.

Similarly, the group $A_4$ has a unique cyclic quotient, of order $3$. If $m$ is not divisible by $3$, then $\Lambda=\{1\}$ and we are done. Otherwise, one needs to decide whether the unique cubic extension of $K$ which is contained in $\mathrm{PV}(\mathcal{D}/K)$ is isomorphic to $K(w^{\frac{m}{3}})$; this will hold precisely when $\Lambda=\mu_3$.

Lastly, for $n\in\mathbb{N}$, the groups of the form\begin{equation*} \left\{\begin{pmatrix}  a & b \\ 0 & a^{-1}  \end{pmatrix}\ \middle|\  a\in \mu_n, b\in B\right\}, \end{equation*} whether $B=\{0\}$ or $B=\mathbb{G}_a(K_0)$, each have precisely one quotient of order $l$, for each $l$ dividing $n$, given by $\left(\begin{smallmatrix} a & b \\ 0 & \smash{a^{-1}}\vphantom{a^{-1}} \end{smallmatrix}\right)\mapsto  a^{\frac{n\varepsilon}{l}}$ for some $\varepsilon\in \mathbb{N}$ relatively prime to $\frac{n}{l}$. To compute $\Lambda$, we proceed as before: one has to find the largest $l\in \mathbb{N}$ such that: $(i)$ $l$ divides $\mathrm{gcd}(m,n)$; and $(ii)$ the unique finite cyclic extensions of $K$ of degree $l$, which are contained respectively in $\mathrm{PV}(\mathcal{E}/K)$ and $\mathrm{PV}(\mathcal{D}/K)$, are algebraically isomorphic. In this case, we have that $\Lambda=\mu_l$. If there is no such $l\in\mathbb{N}$, then $\Lambda=\{1\}$.

\emph{Case 2:} Now suppose that $\pgal(\mathcal{E}/K)$ is infinite, and therefore Kolchin-connected. In this case, $\Lambda$ must either be trivial or infinite. The only differential algebraic subgroups of $\mathrm{SL}_2(K_0)$ which admit an infinite abelian quotient are the groups $\mathrm{UT}(K_0;A,B)$ (defined as in the \hyperlink{output3}{Output} of Algorithm~\ref{borel}). Therefore, $\Lambda=\{1\}$ whenever $\pgal(\mathcal{D}/K)$ is not of this form. The following algorithm computes $\Lambda$ and the maps $\varphi$ and $\psi$. For simplicity, Algorithm~\ref{recover-gp} assumes that $\pgal(\mathcal{D}/K)$ is upper-triangular, and that $\pgal(\mathcal{E}/K)$ is infinite and has already been computed.

%% ALGORITHM 6 -----------------------

\begin{algo}[(Recovering the original group in the upper triangular case)]\label{recover-gp} $\hphantom{X}$

\begin{center}
\begin{boxedminipage}{14.8cm}\textbf{Inputs:} $\mathcal{D}=\delta_x^2-r$ and $\mathcal{E}=\delta_x+\frac{1}{2}r_1$, with $r,r_1\in C(x,t)$.

\hypertarget{output6}{\textbf{Output:}}A linear differential algebraic group $\Lambda^{\mathcal{D},\mathcal{E}}$, and differential-algebraic homomorphisms \begin{equation*}\varphi^{\mathcal{D},\mathcal{E}}:\pgal(\mathcal{D}/K)\twoheadrightarrow\Lambda^{\mathcal{D},\mathcal{E}}\qquad\text{and}\qquad \psi^{\mathcal{D},\mathcal{E}}:\pgal(\mathcal{E}/K)\twoheadrightarrow\Lambda^{\mathcal{D},\mathcal{E}}\end{equation*} such that $\Lambda^{\mathcal{D},\mathcal{E}}$ is the $\mathrm{PPV}$ group of $\mathrm{PPV}(\mathcal{D}/K)\cap\mathrm{PPV}(\mathcal{E}/K)$, and $\varphi^{\mathcal{D},\mathcal{E}}$ and $\psi^{\mathcal{D},\mathcal{E}}$ are the corresponding diferential-algebraic surjections of $\mathrm{PPV}$ groups.\end{boxedminipage}
\end{center}

\begin{asparaenum}[\bfseries {Step} 1:]

\item \label{step6.1} Compute a solution $u\in\overline{C(t)}(x)$ to the Riccati equation $R_\mathcal{D}(U)=0$. Apply Algorithm~\ref{borel} with input $u$, and let $\pgal(\mathcal{D}/K)\simeq\mathrm{UT}(K_0;A^u,B^u)$ be the \hyperlink{output3}{Output}. 

\item \label{step6.2} If $A^u\subseteq\mu_2$ , compute a solution $y_1\in\overline{C(x,t)}$ to the differential equation $\delta_xY=uY$. Apply Algorithm~\ref{comparison} with inputs $\eta_1:=y_1^{-2}$, $\eta_2:=-\frac{1}{2}\partial_tr_1$, and let $\Lambda^{\eta_1,\eta_2}$, $\pi_1^{\eta_1,\eta_2}$, and $\pi_2^{\eta_1,\eta_2}$ be the \hyperlink{output5}{Outputs}. Set \begin{equation*}\Lambda^{\mathcal{D},\mathcal{E}}  :=\Lambda^{\eta_1,\eta_2}, \qquad \varphi^{\mathcal{D},\mathcal{E}}  :=\pi_1^{\eta_1,\eta_2}, \qquad\text{and}\qquad \psi^{\mathcal{D},\mathcal{E}}  :\sigma\mapsto\pi_2^{\eta_1,\eta_2}\smash{\left(\frac{\partial_t\sigma}{\sigma}\right)},\vspace{-.06in}\end{equation*} and then go to \hyperlink{output6}{Output}.

If $A^u\not\subset\mu_2$, proceed to Step~\ref{step6.3}.

\item \label{step6.3} If $A^u=\mu_n$ for some $n\geq 3$, set $\Lambda^{\mathcal{D},\mathcal{E}}:=\{1\}$, let $\varphi^{\mathcal{D},\mathcal{E}}$, $\psi^{\mathcal{D},\mathcal{E}}$ be the trivial maps, and go to \hyperlink{output6}{Output}.

If $A^u$ is infinite, proceed to Step~\ref{step6.4}.

\item \label{step6.4} If there exist integers\footnote{See \cite[Prop. 2.4]{compoint-singer:1999} for an algorithm to compute these integers, if they exist.} $m_1, m_2\in\mathbb{Z}$, not both zero, and $f\in\overline{C(t)}(x)$, such that \begin{equation}\label{silly-comp}m_1u-\frac{m_2}{2}r_1=\frac{\delta_xf}{f},\end{equation} there is a unique pair of integers $m_1$ and $m_2$ satisfying~\eqref{silly-comp} with $m_1$ positive and as small as possible. Set \begin{equation*}\Lambda^{\mathcal{D},\mathcal{E}} :=\{\sigma^{m_1} \ | \ \sigma\in A^u\}=\{\sigma^{m_2} \ | \ \sigma\in\pgal(\mathcal{E}/K)\}, \qquad \varphi^{\mathcal{D},\mathcal{E}} :\sigma\mapsto\sigma^{m_1}, \qquad\text{and}\qquad \psi^{\mathcal{D},\mathcal{E}} :\sigma\mapsto\sigma^{m_2},\end{equation*} and go to \hyperlink{output6}{Output}.

Otherwise, proceed to Step~\ref{step6.5}.

\item \label{step6.5} Apply Algorithm~\ref{comparison} with inputs $\eta_1:=\partial_tu$, $\eta_2:=-\frac{1}{2}\partial_tr_1$, and let $\Lambda^{\eta_1,\eta_2}$, $\pi_1^{\eta_1,\eta_2}$, and $\pi_2^{\eta_1,\eta_2}$ be the \hyperlink{output5}{Outputs}. Set \begin{equation*}\smash{\Lambda^{\mathcal{D},\mathcal{E}} :=\Lambda^{\eta_1,\eta_2}, \qquad \varphi^{\mathcal{D},\mathcal{E}}  :\sigma\mapsto\pi_1^{\eta_1,\eta_2}\left(\frac{\partial_t\sigma}{\sigma}\right),\qquad\text{and}\qquad \psi^{\mathcal{D},\mathcal{E}} :\sigma\mapsto\pi_2^{\eta_1,\eta_2}\left(\frac{\partial_t\sigma}{\sigma}\right),}\end{equation*} and then go to \hyperlink{output6}{Output}.

\end{asparaenum}
\end{algo}

%------------------------------------

Our proof that Algorithm~\ref{recover-gp} gives the right answer will rely on the following result.

\begin{lem}\label{extensions}
Let $\mathcal{E}:=\delta_x-g$ for some $g\in K$, and let $0\neq\rho\in\mathrm{PPV}(\mathcal{E}/K)$ be a solution (i.e., $\delta_x\rho=g\rho$). Let $\ell$ be the order of the differential operator $\mathcal{L}\in K_0[\partial_t]$ such that $ \pgal(\mathcal{E}/K)\simeq\mathbb{G}_m(K_0;\mathcal{L})$. Then, $\mathrm{PPV}(\mathcal{E}/K)$ has transcendence degree $\ell+1$ over $K$, and it is \emph{algebraically} generated by $\rho$ and $\tfrac{\partial_t\rho}{\rho},\partial_t\bigl(\tfrac{\partial_t\rho}{\rho}\bigr),\dots , \partial_t^{\ell-1}\bigl(\tfrac{\partial_t\rho}{\rho}\bigr)$. \end{lem}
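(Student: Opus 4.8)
The plan is to identify $\pgal(\mathcal{E}/K)$ with $\mathbb{G}_m(K_0;\mathcal{L})$ through the character $\sigma\mapsto e_\sigma:=\tfrac{\sigma\rho}{\rho}$ (a computation with $\delta_x\rho=g\rho$ and $g\in K$ shows $e_\sigma\in K_0^\times$), and to work throughout with $u:=\tfrac{\partial_t\rho}{\rho}$. Using $\delta_x\partial_t=\partial_t\delta_x$ I would first record the two basic identities $\delta_x\partial_t^iu=\partial_t^{i+1}g\in K$ for all $i$, and, since each $\sigma$ is a $\Delta$-automorphism fixing $g$, $\sigma u=\partial_t(e_\sigma\rho)/(e_\sigma\rho)=u+b_\sigma$ where $b_\sigma:=\tfrac{\partial_te_\sigma}{e_\sigma}$ lies in $\ker\mathcal{L}=\mathbb{G}_a(K_0;\mathcal{L})$ by the very definition of $\mathbb{G}_m(K_0;\mathcal{L})$.

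Next I would prove the generation statement together with the bound $\mathrm{trdeg}_K\mathrm{PPV}(\mathcal{E}/K)\le\ell+1$. Writing $\mathcal{L}=\sum_{i=0}^\ell c_i\partial_t^i$ monic, the operator $\mathcal{L}$ commutes with every $\sigma$, so $\sigma u-u=b_\sigma\in\ker\mathcal{L}$ gives $\sigma(\mathcal{L}u)=\mathcal{L}u$, whence $\mathcal{L}u\in K$ by the Galois correspondence (Theorem~\ref{pgal}). Solving for $\partial_t^\ell u$ and differentiating repeatedly (the $c_i$ and $\mathcal{L}u$ all lie in $K$) shows $\partial_t^nu\in N:=K(u,\partial_tu,\dots,\partial_t^{\ell-1}u)$ for every $n$; as $\delta_x\partial_t^iu\in K$ as well, $N$ is a $\Delta$-subfield of $\mathrm{PPV}(\mathcal{E}/K)$. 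On the other hand, since $\delta_x^m\rho\in K\rho$, the recursion $R_0=1$, $R_i=\partial_tR_{i-1}+uR_{i-1}$ of Algorithm~\ref{exponential} (with $q=u$) gives $\partial_t^n\rho=R_n\rho$ with $R_n$ a polynomial in $u,\partial_tu,\dots$, so $\mathrm{PPV}(\mathcal{E}/K)=K(\rho,\partial_t\rho,\partial_t^2\rho,\dots)=N(\rho)=K(\rho,u,\partial_tu,\dots,\partial_t^{\ell-1}u)$. This is exactly the asserted generating set and bounds the transcendence degree by $\ell+1$.

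The main work is the matching lower bound, which I would split into a $u$-part and a $\rho$-part. For the $u$-part, $N=K\langle u\rangle_\Delta$ is the $\mathrm{PPV}$ extension attached to the inhomogeneous equation $\delta_xY=\partial_tg$ (the case $\eta\in K$ of \S\ref{order1}), and restriction $\sigma\mapsto\sigma|_N$ realizes $\pgal(N/K)$ as the image of $\sigma\mapsto b_\sigma$. Here I would invoke that $K_0$ is $\partial_t$-differentially closed: for each $b\in\ker\mathcal{L}$ there is $e\in K_0^\times$ with $\tfrac{\partial_te}{e}=b$, and then $e\in\mathbb{G}_m(K_0;\mathcal{L})=\pgal(\mathcal{E}/K)$, so the image is all of $\mathbb{G}_a(K_0;\mathcal{L})$. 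Hence $\pgal(N/K)\simeq\mathbb{G}_a(K_0;\mathcal{L})$ and $\mathrm{trdeg}_KN=\mathrm{ord}(\mathcal{L})=\ell$, with $u,\partial_tu,\dots,\partial_t^{\ell-1}u$ algebraically independent over $K$ (this is the additive assertion used for $\mathbb{G}_a(K_0;\mathcal{L}_r)$ in the proof of Proposition~\ref{comparison-proof}).

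For the $\rho$-part it remains to show $\rho$ is transcendental over $N$. The subgroup $H:=\pgal(\mathcal{E}/N)=\{\sigma\mid\sigma u=u\}=\{\sigma\mid b_\sigma=0\}=\{\sigma\mid e_\sigma\in C^\times\}$ is isomorphic to $C^\times$, since $C^\times\subseteq\mathbb{G}_m(K_0;\mathcal{L})$. Now $H$ fixes $N$ pointwise while sending $\rho$ to $e_\sigma\rho$, so the $H$-orbit of $\rho$ is $\{c\rho\mid c\in C^\times\}$, which is infinite because $C$ is infinite and $\rho\neq0$. Were $\rho$ algebraic over $N$, all of these images would be roots of its minimal polynomial over $N$ and hence finite in number, a contradiction; thus $\rho$ is transcendental over $N$. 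Combining the two parts yields $\mathrm{trdeg}_K\mathrm{PPV}(\mathcal{E}/K)=\ell+1$, forcing the $\ell+1$ exhibited generators to be algebraically independent over $K$. The one delicate point running through the argument is the use of differential closedness of $K_0$ to make the log-derivative map $\mathbb{G}_m(K_0;\mathcal{L})\to\mathbb{G}_a(K_0;\mathcal{L})$ surjective, which is what pins $\mathrm{ord}(\mathcal{L})=\ell$ on the additive side and keeps $H$ as large as $C^\times$.
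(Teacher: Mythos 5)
Your proposal is correct, but it is organized differently from the paper's own proof, which is essentially a one-shot application of the Kolchin--Ostrowski theorem (Theorem~\ref{kolostro}). The paper reduces, exactly as you do, to the generators $\rho,\tfrac{\partial_t\rho}{\rho},\dots,\partial_t^{\ell-1}\bigl(\tfrac{\partial_t\rho}{\rho}\bigr)$, and then applies Theorem~\ref{kolostro} to this whole set at once: an algebraic dependence would force either $\rho^m\in K$ for some nonzero integer $m$ (impossible, since then $\pgal(\mathcal{E}/K)$ would embed in $\mu_m$, whereas $\mathbb{G}_m(K_0;\mathcal{L})\supseteq C^\times$ is infinite) or a nontrivial relation $\sum_j c_j\partial_t^j\bigl(\tfrac{\partial_t\rho}{\rho}\bigr)\in K$ with $c_j\in K_0$ (impossible by the minimality of $\mathcal{L}$, which the paper cites from Lemma~\ref{singer1} and the proof of Proposition~\ref{borel-proof}). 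You instead split the lower bound on the transcendence degree along the tower $K\subseteq N\subseteq N(\rho)$: the $u$-part is delegated to the additive statement from the proof of Proposition~\ref{comparison-proof} (which itself rests on Theorem~\ref{kolostro} and Lemma~\ref{singer1}), while the $\rho$-part is handled by an orbit-counting argument with the subgroup $H\simeq C^\times$, thereby replacing the multiplicative alternative of Kolchin--Ostrowski altogether. The trade-off: the paper's route is shorter and disposes of both alternatives in one stroke; yours makes explicit two points the paper leaves implicit in its citations --- that differential closedness of $K_0$ is what makes the logarithmic derivative $\mathbb{G}_m(K_0;\mathcal{L})\to\mathbb{G}_a(K_0;\mathcal{L})$ surjective, which is what pins $\pgal(N/K)\simeq\mathbb{G}_a(K_0;\mathcal{L})$ and hence $\mathrm{trdeg}_K N=\ell$, and that transcendence of $\rho$ over all of $N$ (not merely the failure of $\rho^m\in K$) follows from the elementary fact that an element with an infinite Galois orbit over a field it would be algebraic over cannot exist. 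Both arguments rest on the same two pillars (Kolchin--Ostrowski for the additive relations, infinitude of the group for the multiplicative one), so this is a reorganization rather than a new method, but it is a valid and somewhat more self-contained one.
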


\begin{proof} Since $\delta_x\rho=g\rho $, we have that in fact $K\langle \rho\rangle_\Delta=\mathrm{PPV}(\mathcal{E}/K)$ is generated over $K$ by $\{\partial_t^n\rho\}$, for $n\in\mathbb{Z}_{\geq 0}$. That is, $\mathrm{PPV}(\mathcal{E}/K)=K\langle \rho\rangle_{\partial_t}$. A computation shows that \begin{equation*} \partial_t^{n-1}\big(\tfrac{\partial_t\rho}{\rho}\big)-\tfrac{\partial_t^n\rho}{\rho}\in K\bigl(\rho,\partial_t\rho,\dots,\partial_t^{n-1}\rho\bigr), \end{equation*} and therefore $K\langle \rho\rangle_{\partial_t}=K(\rho)\smash{\bigl\langle\tfrac{\partial_t\rho}{\rho}\bigr\rangle_{\partial_t}}$. One can show that $\smash{\delta_x\partial_t^n\bigl(\frac{\partial_t\rho}{\rho}\bigr)=\partial_t^{n+1}g\in K}$ for every $n\geq 0$, and our result now follows from Theorem~\ref{kolostro}, since $\mathcal{L}$ is the operator of smallest order such that $\smash{\mathcal{L}\bigl(\tfrac{\partial_t\rho}{\rho}\bigr)\in K}$ (cf. Lemma~\ref{singer1} and the proof of Proposition~\ref{borel-proof}).\end{proof}

\begin{prop}[(Algorithm~\ref*{recover-gp} is correct)] Suppose that $\pgal(\mathcal{D}/K)$ is isomorphic to $\mathrm{UT}(K_0;A,B)$, and that $\pgal(\mathcal{E}/K)$ is infinite. Define the fields $L$ and $E$ as above, that is \begin{equation*} E  := \mathrm{PPV}\bigl(\mathcal{E}/\mathrm{PPV}(\mathcal{D}/K)\bigr), \qquad \text{and} \qquad L  := \mathrm{PPV}(\mathcal{D}/K)\cap\mathrm{PPV}(\mathcal{E}/K)\subseteq E. \end{equation*}Then, the \hyperlink{output6}{Output} of Algorithm~\ref{recover-gp} is correct. That is, $\Lambda\simeq\Lambda^{\mathcal{D},\mathcal{E}}$, $\varphi=\varphi^{\mathcal{D},\mathcal{E}}$, and $\psi=\psi^{\mathcal{D},\mathcal{E}}$.
\end{prop}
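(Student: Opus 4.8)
The plan is to realize $\Lambda=\pgal(L/K)$ as a quotient of $G:=\pgal(\mathcal D/K)\simeq\mathrm{UT}(K_0;A,B)$ and, in each branch of the algorithm, to identify the field $L=\mathrm{PPV}(\mathcal D/K)\cap\mathrm{PPV}(\mathcal E/K)$ explicitly. Since $\pgal(\mathcal E/K)$ is abelian, every common quotient $\Lambda$ is abelian, so the surjection $\varphi\colon G\twoheadrightarrow\Lambda$ must kill the commutator subgroup $[G,G]$. The first step is therefore to compute $[G,G]$: a commutator in $\mathrm{UT}(K_0;A,B)$ has trivial diagonal, and its unipotent part ranges over a set containing $(a^2-1)B$ for every $a\in A$. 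By Lemma~\ref{sit}, $(a^2-1)B=B$ as soon as $a^2\neq1$ and $a^2-1\in C$, so that $[G,G]=B$ and hence $L\subseteq F:=K\langle y_1\rangle_\Delta=\psi(B)$, with $\Lambda$ a quotient of $A$. The sole exception is $A\subseteq\mu_2$, where every $a$ satisfies $a^2=1$, so $G$ is abelian with identity component $B$. This structural dichotomy is exactly what separates the branches.

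With this input the finite branches are quick. When $A^u=\mu_n$ with $n\geq3$, a primitive $n$th root gives $a$ with $a^2-1\in C^\times$, so $[G,G]=B$ and the abelianization of $G$ is the finite group $\mu_n$; since $\Lambda$ is a quotient of the Kolchin-connected group $\pgal(\mathcal E/K)$ (Theorem~\ref{zariski} and Theorem~\ref{dag2}), it is connected, hence trivial, which is Step~\ref{step6.3}. When $A^u\subseteq\mu_2$, connectedness of $\Lambda$ forces it to be a quotient of $B$, so the overlap is \emph{additive}; here $y_1$ is algebraic of degree at most $2$ over $K$, whence $y_1^{-2}\in\overline{C(t)}(x)$ and Algorithm~\ref{comparison} legitimately computes the intersection of $\mathrm{PPV}(\delta_xY=y_1^{-2}/K)$ with $\mathrm{PPV}(\delta_xY=-\tfrac12\partial_tr_1/K)$. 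I would then verify that the unipotent coordinate $\sigma\mapsto b_\sigma$ realizes $\varphi=\pi_1^{\eta_1,\eta_2}$ directly, while $\pgal(\mathcal E/K)\subseteq\mathbb G_m(K_0)$ contributes through its logarithmic derivative, giving $\psi=\pi_2^{\eta_1,\eta_2}\circ(\sigma\mapsto\partial_t\sigma/\sigma)$, as in Step~\ref{step6.2}.

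For $A$ infinite (Steps~\ref{step6.4}--\ref{step6.5}) the reduction $L=F\cap\mathrm{PPV}(\mathcal E/K)$ exhibits $L$ as an intersection of two $\mathbb G_m$-type $\mathrm{PPV}$ fields generated by the exponentials $y_1,w$. By Lemma~\ref{extensions}, each of $F$ and $\mathrm{PPV}(\mathcal E/K)$ is algebraically generated over $K$ by its exponential together with the $\partial_t$-iterates of $\partial_ty_1/y_1$ (resp. $\partial_tw/w$), whose $\delta_x$-derivatives lie in $K$. Applying the Kolchin--Ostrowski Theorem~\ref{kolostro} with multiplicative generators $\{y_1,w\}$ and these additive generators, a nontrivial element of $L\setminus K$ forces either (i) a multiplicative relation $y_1^{m_1}w^{m_2}\in K$, which is precisely the solvability of~\eqref{silly-comp} and yields $\Lambda=\{\sigma^{m_1}\mid\sigma\in A^u\}$ with $\varphi\colon\sigma\mapsto a_\sigma^{m_1}$ and $\psi\colon\sigma\mapsto\sigma^{m_2}$ (Step~\ref{step6.4}); or (ii) a purely additive relation among the $\partial_t$-iterates of $\partial_ty_1/y_1$ and $\partial_tw/w$, which identifies $L$ with the intersection of the primitive fields $\mathrm{PPV}(\delta_xY=\partial_tu/K)$ and $\mathrm{PPV}(\delta_xY=-\tfrac12\partial_tr_1/K)$, computed by Algorithm~\ref{comparison} with $\eta_1=\partial_tu$, $\eta_2=-\tfrac12\partial_tr_1$. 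As in Proposition~\ref{borel-proof}, the identity $\sigma(\partial_ty_1/y_1)=\partial_ty_1/y_1+\partial_ta_\sigma/a_\sigma$ shows both groups act through the logarithmic derivative $\sigma\mapsto\partial_t\sigma/\sigma$, producing the maps of Step~\ref{step6.5}. In each branch, the claim that $\varphi,\psi$ are the Galois-correspondence surjections of the lattice~\eqref{lattice} follows from Theorem~\ref{pgal} applied to $\mathrm{PPV}(\mathcal D/K)$ and $\mathrm{PPV}(\mathcal E/K)$, exactly as in Proposition~\ref{comparison-proof}.

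The main obstacle I anticipate is showing that the division of Steps~\ref{step6.4}--\ref{step6.5} is both exhaustive and mutually exclusive, i.e. that the multiplicative and additive overlaps cannot combine to give a $\Lambda$ carrying both a $\mathbb G_m$-part and a $\mathbb G_a$-part beyond what a single branch computes. The crucial observation to make rigorous is that the additive shadow $m_1\,\partial_ty_1/y_1+m_2\,\partial_tw/w=\partial_tf/f$ of a multiplicative relation already lies in $K$, and hence contributes nothing new to $L$; thus whenever~\eqref{silly-comp} is solvable the genuine overlap is multiplicative, and in its absence Kolchin--Ostrowski leaves only the additive intersection handled by Algorithm~\ref{comparison}. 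A supporting point is that $\Lambda$ is $\partial_t$-generated by a single element (because $\mathrm{PPV}(\mathcal E/K)=K\langle w\rangle_\Delta$ is), which pins it to a single type and matches it to the output of Algorithm~\ref{comparison}; and throughout one must check that every input fed to the auxiliary algorithms actually lies in $\overline{C(t)}(x)$, so that Algorithms~\ref{rational} and~\ref{comparison} apply.
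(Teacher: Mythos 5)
Your proposal is correct in substance and ends at the same place as the paper---Lemma~\ref{extensions} plus Theorem~\ref{kolostro} give the multiplicative/additive dichotomy, Step~\ref{step6.4} is matched with relations $y_1^{m_1}w^{m_2}\in K$, and Step~\ref{step6.5} is delegated to Proposition~\ref{comparison-proof}---but your route to that dichotomy is genuinely different in the key reduction. The paper never computes $[G,G]$: to show that $y_2$ can be ignored (i.e. $L\subseteq K\langle y_1\rangle_\Delta$), it takes an element of $(K\langle y_1,y_2\rangle_\Delta\cap K\langle w\rangle_\Delta)\setminus K\langle y_1\rangle_\Delta$, extracts the linear relation~\eqref{bad4} from Theorem~\ref{kolostro}, and kills its $\theta_1$-part by an explicit partial-fraction computation in the transcendental element $y_1$. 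Your argument---$\Lambda$ is abelian as a quotient of $\pgal(\mathcal{E}/K)\leq\mathbb{G}_m(K_0)$, hence $\ker\varphi\supseteq[G,G]$, and $[G,G]=B$ via the commutator computation and Lemma~\ref{sit} once $A\not\subseteq\mu_2$---is shorter, purely group-theoretic, and works uniformly across Steps~\ref{step6.3}--\ref{step6.5}; what it needs in exchange is Theorem~\ref{dag2} (every infinite differential algebraic subgroup of $\mathbb{G}_m(K_0)$ contains $\mathbb{G}_m(C)$, supplying $a$ with $a^2-1\in C^\times$) and Cassidy's facts that images are Kolchin-closed. Likewise your connectedness argument for Step~\ref{step6.3} replaces the paper's direct Kolchin--Ostrowski contradiction. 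Two points need supplementing, though at a level comparable to the paper's own terseness: in Step~\ref{step6.2}, ``$\Lambda$ is a quotient of $B$'' alone does not give $\varphi(A)=1$; you also need that $\varphi(B)\cong B/(B\cap\ker\varphi)$ is torsion-free (a quotient of a $C$-vector space by a $C$-subspace, by Theorem~\ref{dag1}), so the finite image of $A$ dies. And your ``additive shadow'' resolution of the Step~\ref{step6.4}/\ref{step6.5} overlap is the right idea---the paper merely asserts ``precisely one of the following statements is true''---but to finish it one must also rule out $y_1^{d}\in\mathrm{PPV}(\mathcal{E}/K)$ for a proper divisor $d<m_1$, e.g.\ by a Kummer/valuation argument in the rational function field $K(w,\theta_2,\dots,\partial_t^{\ell_2-1}\theta_2)$, so that $L$ is exactly $K\langle y_1^{m_1}\rangle_\Delta$ and the output of Step~\ref{step6.4} is not too small.
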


\begin{proof} Let $y_1,y_2\in \mathrm{PPV}(\mathcal{D}/K)$ be the solutions computed in Proposition~\ref{borel-proof}, that is, $y_1$ and $y_2$ satisfy the relations $\delta_xy_1=uy_1$ and $\delta_x\bigl(\frac{y_2}{y_1}\bigr)=y_1^{-2}$. We will divide the proof in two cases, depending on whether $A$ is finite or infinite.

\emph{Case 1:} If $A =\mu_n$ for some $n\in\mathbb{N}$, the Galois correspondence implies that $y_1^n\in K$, and we showed in the proof of Proposition~\ref{borel-proof} that $\mathrm{PPV}(\mathcal{D}/K)=K(y_1)\smash{\bigl\langle \tfrac{y_2}{y_1}\bigr\rangle_\Delta}$. Suppose that $B =\mathbb{G}_a(K_0;\mathcal{L}_1)$, where $0\neq\mathcal{L}_1\in\overline{C(t)}[\partial_t]$, and let $\ell_1:=\mathrm{ord}(\mathcal{L}_1)$. If we let $\theta_1:=\frac{y_2}{y_1}$, the argument given at the beginning of the proof of Proposition~\ref{comparison-proof} shows that in fact\vspace{-.12in} \begin{equation*}\mathrm{PPV}(\mathcal{D}/K)=K(y_1,\theta_1,\partial_t\theta_1,\dots,\partial_t^{\ell_1-1}\theta_1).\end{equation*}  On the other hand, if we let $\mathcal{L}_2\in C(t)[\partial_t]$ such that $\pgal(\mathcal{E}/K)\simeq\mathbb{G}_m(K_0;\mathcal{L}_2)$, and $\theta_2:=\frac{\partial_tw}{w}$, we may apply Lemma~\ref{extensions} to $\mathrm{PPV}(\mathcal{E}/K)$ to conclude that \begin{equation*} \mathrm{PPV}(\mathcal{E}/K)=K( w,\theta_2,\partial_t\theta_2, \dots,\partial_t^{\ell_2-1}\theta_2), \end{equation*} where $\ell_2:=\mathrm{ord}(\mathcal{L}_2)$. Theorem~\ref{kolostro} shows that \begin{equation*}K(y_1)\cap K(\theta_ s ,\partial_t\theta_ s ,\dots,\partial_t^{\ell_s -1}\theta_s )  =K= K(w)\cap K(\theta_s ,\partial_t\theta_s ,\dots,\partial_t^{\ell_s -1}\theta_s ), \end{equation*}for each $ s =1,2$. Moreover, since $y_1$ is algebraic and $w$ is transcendental (over $K$), another application of Theorem~\ref{kolostro} gives that $K(y_1)\cap K(w)=K$. Therefore, if we let $p_1:=\tfrac{\delta_x(y_1^{-2})}{y_1^{-2}}$, $p_2:=-\tfrac{\delta_x\partial_tr_1}{\partial_tr_1}$, and $\mathcal{P}_s :=\delta_x^2-p_s \delta_x$ for $ s =1,2$ (we remark that these are the homogeneous operators corresponding to the first order inhomogeneous equations $\delta_xY=y_1^{-2}$ and $\delta_xY=-\frac{1}{2}\partial_tr_1$, respectively), we have that\begin{equation*} \mathrm{PPV}(\mathcal{P}_s /K)=K\langle\theta_s \rangle_{\partial_t} \subset E,\qquad \text{and}\qquad L=\mathrm{PPV}(\mathcal{P}_1/K)\cap\mathrm{PPV}(\mathcal{P}_2/K)\subset E.\end{equation*} If $n\leq 2$, then $y_1^{-2}\in K$, in which case Proposition~\ref{comparison-proof} establishes our contention. We claim that if $n\geq 3$, then $L=K$. To see this, we proceed by contradiction: suppose that $L\neq K$. By Theorem~\ref{kolostro}, the elements $\theta_1,\theta_2,\partial_t\theta_1,\partial_t\theta_2,\dots,\partial_t^{\ell_1-1}\theta_1,\partial_t^{\ell_2-1}\theta_2\in E$ must satisfy a relation of the form \begin{equation}\label{recover-proof-1} \sum_{i=0}^{\ell_1-1}a_i\partial_t^i\theta_1 -\sum_{j=0}^{\ell_2-1}b_j\partial_t^j\theta_2=f\in K, \end{equation} with $a_0,\dots,a_{\ell_1-1},b_0,\dots,b_{\ell_2-1}\in K_0$ not all zero. Applying $\delta_x$ on both sides of the equality in~\eqref{recover-proof-1} gives a contradiction, since $\sum_ia_i\partial_t^i\eta_1=(\sum_ia_iR_i)y_1^{-2}\notin K$, where the sequence $\{R_i\}\subset K$ is defined as in Step~\ref{step2.3} of Algorithm~\ref{exponential}.

\emph{Case 2:} If $A$ is infinite, then $y_1$ is transcendental over $K$. Let $\mathcal{L}_3\in \overline{C(t)}[\partial_t]$ such that $A =\mathbb{G}_m(K_0;\mathcal{L}_3)$, and let $\theta_3:=\frac{\partial_ty_1}{y_1}$. If we apply Lemma~\ref{extensions} to the field $K\langle y_1\rangle_\Delta=~\mathrm{PPV}\smash{\bigl((\delta_x-u)/K\bigr)}$ with $\rho=y_1$, we obtain\begin{equation*} \smash{K\langle y_1\rangle_\Delta=K(y_1,\theta_3,\partial_t\theta_3,\dots,\partial_t^{\ell_3-1}\theta_3)},\end{equation*} where $\ell_3:=\mathrm{ord}(\mathcal{L}_3)$. We claim that $L=K\langle y_1\rangle_\Delta\cap K\langle w\rangle_\Delta\subset E$ (in other words, we may ignore the second solution $y_2\in\mathrm{PPV}(\mathcal{D}/K)$ in this case). To see this, let $\theta_1:=\smash{\frac{y_2}{y_1}}$ and $\theta_2:=\smash{\frac{\partial_tw}{w}}$ as above. Since every element of $(K\langle y_1,y_2\rangle_\Delta\cap K\langle w\rangle_\Delta)\backslash K\langle y_1\rangle_\Delta$ defines a nontrivial algebraic relation over $K\langle y_1\rangle_\Delta$ amongst the elements $\theta_1,\theta_2,\partial_t\theta_1,\partial_t\theta_2,\dots,\partial_t^{\ell_1-1}\theta_1,\partial_t^{\ell_2-1}\theta_2$ (cf. the proof of Proposition~\ref{comparison-proof}), and since $\delta_x(\partial_t^i\theta_1),\delta_x(\partial_t^j\theta_2)\in K\langle y_1\rangle_\Delta$, Theorem~\ref{kolostro} implies that, if $K\langle w\rangle_\Delta\cap\ K\langle y_1,y_2\rangle_\Delta\not\subset K\langle y_1\rangle_\Delta$, then there is a nontrivial linear combination \begin{equation}\label{bad4}\sum_{i=0}^{\ell_1-1}a_i\partial_t^i\theta_1-\sum_{j=0}^{\ell_2-1}b_j\partial^j\theta_2=f\in K\langle y_1\rangle_\Delta\end{equation} with $a_i,b_j\in K_0$. Define the sequence $\{R_i\}\subset K$ by $R_0:=1$, $R_i:=\partial_tR_{i-1}-2\theta_3R_{i-1}$ for $i\geq 1$ as in Step~\ref{step2.3} of Algorithm~\ref{exponential}. Applying $\delta_x$ on both sides of the equality in \eqref{bad4}, we obtain \begin{equation}\label{bad5} \left(\sum_{i=0}^{\ell_1-1}a_iR_i\right)y_1^{-2} +\frac{1}{2} \sum_{j=0}^{\ell_2-1}b_j \partial_t^{j+1}r_1=\delta_xf. \end{equation} There is a unique partial fraction decomposition: $f=\sum_kc_ky_1^k+\sum_{m,l} \frac{d_{m,l}}{(y_1-e_m)^l}$ with $c_k\in K(\theta_3,\dots,\partial_t^{\ell_3-1}\theta_3)$, and $d_{l,m}$, $e_l$ algebraic over this field. Therefore, if we let $e_0:=0$, \eqref{bad5} implies that\begin{align*} \delta_xf & = \sum_k(\delta_xc_k+ku)y_1^k+\sum_{m,l}\frac{\delta_xd_{m,l}(y_1-e_m)-ld_{m,l}(uy_1-\delta_xe_m)}{(y_1-e_m)^{l+1}} \\ &= \sum_k(\delta_xc_k+ku)y_1^k+\sum_{m,l}\frac{\delta_xd_{m,l}-lud_{m,l}}{(y_1-e_m)^l}+\frac{ld_{m,l}(\delta_xe_m-ue_m)}{(y_1-e_m)^{l+1}} \\ &= \delta_xc_0 + (\delta_xd_{0,2}-2ud_{0,2})y_1^{-2}.\end{align*}This implies that $\delta_x(\sum_jb_j\partial_t^j\theta_2+c_0)=0$, whence $\sum_jb_j\partial_t^j\theta_2=g-c_0$ for some $g\in K_0$; or, in other words, $\sum_jb_j\partial_t^j\theta_2\in K\langle y_1\rangle_\Delta$. We have shown that if $\sum_jb_j\partial_t^j\theta_2\in K\langle y_1,y_2\rangle_\Delta$, then $\sum_jb_j\partial_t^j\theta_2\in K\langle y_1\rangle_\Delta$. This establishes our claim that $L= K\langle y_1\rangle_\Delta\cap K\langle w\rangle_\Delta$.

Since $\smash{\frac{\delta_xy_1}{y_1},\frac{\delta_xw}{w}}\in K$, and $\delta_x(\partial_t^i\theta_2),\delta_x(\partial_t^j\theta_3)\in K$, Theorem~\ref{kolostro} implies that \begin{equation*}K(y_1)\cap K(\theta_s ,\partial_t\theta_s ,\dots,\partial_t^{\ell_s -1}\theta_s )=K=K(w)\cap K(\theta_s ,\partial_t\theta_s ,\dots,\partial_t^{\ell_s -1}\theta_s ),\end{equation*} for each $ s =2,3$. Therefore, if $L\neq K$, precisely one of the following statements is true:
\begin{enumerate}

\item \label{end-1} $L= K(y_1)\cap K(w)$.

\item \label{end-2} $L= K(\theta_2,\partial_t\theta_2,\dots,\partial_t^{\ell_2-1}\theta_2)\cap K(\theta_3,\partial_t\theta_3,\dots,\partial_t^{\ell_3-1}\theta_3)$.

\end{enumerate}
By Theorem~\ref{kolostro}, \eqref{end-1} holds if and only if there exist integers $m_1$ and $m_2$, not both zero, such that $y_1^{m_1}w^{m_2}~\in~K$, and this holds if and only if the condition of Step~\ref{step6.4} holds. In case \eqref{end-2}, Proposition~\ref{comparison-proof} establishes our result. \end{proof}

 %-----------------------------------------------------------------

\section{Concluding remarks}\label{conclusion}
\subsection{Consequences} We mention two consequences of the algorithms.

\begin{thm} \label{sitthm} Every unimodular $\mathrm{PPV}$~group of a second order homogeneous linear equation over $K$ is one of the groups in Sit's classification. \end{thm}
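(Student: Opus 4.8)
The plan is to argue by a case analysis organized around Kovacic's four cases (Theorem~\ref{kov-cases}), since every operator $\mathcal{D}=\delta_x^2-r$ arising from a second order equation over $K$ falls into exactly one of them, and the case is detected by $\gal(\mathcal{D}/K)$. In each case we have already identified $\pgal(\mathcal{D}/K)$ with an explicit differential algebraic subgroup of $\mathrm{SL}_2(K_0)$, so the theorem reduces to matching each such group against Sit's list from \cite{sit:1975}.

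First I would dispose of the two extreme cases. If $\gal(\mathcal{D}/K)=\mathrm{SL}_2(K_0)$ (Kovacic's Case 4), then by Theorem~\ref{zariski} the group $\pgal(\mathcal{D}/K)$ is Zariski-dense in $\mathrm{SL}_2(K_0)$, so Theorem~\ref{sl2} forces it to be either $\mathrm{SL}_2(K_0)$ itself or a conjugate of $\mathrm{SL}_2(C)$; both occur in Sit's classification. If $\gal(\mathcal{D}/K)$ is finite (Case 3), then Theorem~\ref{finite-gps} gives $\pgal(\mathcal{D}/K)=\gal(\mathcal{D}/K)$, a finite subgroup of $\mathrm{SL}_2$, and the finite subgroups are again among the differential algebraic subgroups on Sit's list.

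Next I would treat the reducible cases, which is where the differential-algebraic structure becomes visible. In Case 1, Proposition~\ref{borel-proof} identifies $\pgal(\mathcal{D}/K)$ with $\mathrm{UT}(K_0;A^u,B^u)$, where $A^u\subseteq\mathbb{G}_m(K_0)$ and $B^u\subseteq\mathbb{G}_a(K_0)$ are differential algebraic subgroups. By Cassidy's classification of such subgroups (Theorems~\ref{dag1} and~\ref{dag2}), $A^u$ is either $\mu_n$ or of the form $\mathbb{G}_m(K_0;\mathcal{L})$, and $B^u$ is either $0$, all of $\mathbb{G}_a(K_0)$, or $\mathbb{G}_a(K_0;\mathcal{L})$; these are precisely the diagonal and unipotent parameters Sit uses to describe the triangular differential algebraic subgroups of $\mathrm{SL}_2$. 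Similarly, in Case 2, Proposition~\ref{dihedral-proof} identifies $\pgal(\mathcal{D}/K)$ with $\mathbf{D}_\infty(K_0;A^\phi)$, and $A^\phi\subseteq\mathbb{G}_m(K_0)$ is again governed by Theorem~\ref{dag2}, matching Sit's dihedral-type groups.

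The main obstacle I anticipate is bookkeeping rather than conceptual: one must check that this enumeration is genuinely exhaustive (each equation lands in exactly one Kovacic case) and, more delicately, that the conjugacy normalizations agree. Sit's list is given up to conjugacy in $\mathrm{SL}_2(\mathcal{U})$, whereas our groups are produced in the specific coordinate frame adapted to the solution basis $\{y_1,y_2\}$. I would therefore verify that the change of basis used in Propositions~\ref{borel-proof} and~\ref{dihedral-proof} is realized by conjugation inside $\mathrm{SL}_2$, so that no group outside Sit's list can arise. Once this is confirmed, the theorem follows by listing the finitely many shapes above and citing the corresponding entries of \cite{sit:1975}.
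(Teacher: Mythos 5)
Your proposal is correct and follows essentially the same route as the paper, which states Theorem~\ref{sitthm} without a separate proof precisely because it is the assembled output of the preceding results: Theorems~\ref{zariski}, \ref{sl2} and \ref{finite-gps} dispose of Kovacic's Cases 3 and 4, while Propositions~\ref{borel-proof} and \ref{dihedral-proof} identify the group with $\mathrm{UT}(K_0;A^u,B^u)$ or $\mathbf{D}_\infty(K_0;A^\phi)$, whose diagonal and unipotent parts are constrained by Cassidy's Theorems~\ref{dag1} and \ref{dag2} exactly as in Sit's list. Your additional remark that the solution-basis identification only affects the group by conjugation inside $\mathrm{SL}_2$ (harmless since Sit's classification is up to conjugacy) is a point the paper leaves implicit.
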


\begin{thm} \label{wibmerthm}Let $\mathcal{D}\in\overline{C(t)}(x)[\delta_x]$ be an operator of order $1$ or $2$. Then, $\pgal(\mathcal{D}/K)$ is defined over $\overline{C(t)}$. \end{thm}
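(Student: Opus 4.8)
The plan is to deduce the theorem directly from the algorithms of Sections~\ref{order1} and \ref{order2}, by tracking the field of definition through every construction. The essential fact, already recorded in the remarks following Algorithms~\ref{rational} and \ref{exponential}, is that each operator these algorithms produce lies in $\overline{C(t)}[\partial_t]$: the linear systems whose solutions pin down $\mathcal{L}$ have coefficients in the smallest algebraic extension of $C(t)$ containing the input data, hence in $\overline{C(t)}$. The same holds for Algorithm~\ref{comparison}, whose outputs $\mathcal{L}''$, $\mathcal{L}_r'$ lie in $\overline{C(t)}[\partial_t]$. Since $\mathbb{G}_a(K_0;\mathcal{L})$ and $\mathbb{G}_m(K_0;\mathcal{L})$ are cut out in $\mathrm{GL}_1$ by the single differential equation $\mathcal{L}(X)=0$, respectively $\mathcal{L}(\partial_t X/X)=0$, whose coefficients are exactly those of $\mathcal{L}$, each such group is defined over $\overline{C(t)}$.

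First I would treat $\mathrm{ord}(\mathcal{D})=1$. Writing $\mathcal{D}=\delta_x-g$ with $g\in\overline{C(t)}(x)$, the group $\pgal(\mathcal{D}/K)$ embeds into $\mathbb{G}_m(K_0)$, and by Theorem~\ref{dag2} it is either $\mu_n\subseteq C^\times$, already defined over $C$, or $\mathbb{G}_m(K_0;\mathcal{L})$ with $\mathcal{L}\in\overline{C(t)}[\partial_t]$ obtained from Algorithm~\ref{rational} applied to $\eta=\partial_t g$ (as in \S\ref{recover}); either way it is defined over $\overline{C(t)}$. For $\mathrm{ord}(\mathcal{D})=2$ I would pass to the unimodular form $\delta_x^2-r$, which keeps $r\in\overline{C(t)}(x)$, and argue case by case. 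When $\gal(\mathcal{D}/K)$ is finite, Theorem~\ref{finite-gps} gives $\pgal(\mathcal{D}/K)=\gal(\mathcal{D}/K)\subseteq\mathrm{SL}_2(\overline{C(t)})$; when $\gal(\mathcal{D}/K)=\mathrm{SL}_2(K_0)$, Theorem~\ref{sl2} together with the criterion \eqref{dreyfus} of \cite{dreyfus:2011} yields either $\mathrm{SL}_2(K_0)$ or a conjugate of $\mathrm{SL}_2(C)$, all defined over $C$.

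In the upper-triangular and infinite-dihedral cases, Algorithms~\ref{borel} and \ref{dihedral} express the group as $\mathrm{UT}(K_0;A^u,B^u)$ or $\mathbf{D}_\infty(K_0;A^\phi)$, where each factor $A^u,B^u,A^\phi$ is one of $\{0\}$, $\mu_n$, $\mathbb{G}_a(K_0)$, $\mathbb{G}_m(K_0)$, $\mathbb{G}_a(K_0;\mathcal{L})$, or $\mathbb{G}_m(K_0;\mathcal{L})$ with $\mathcal{L}\in\overline{C(t)}[\partial_t]$. Since $\mathrm{UT}(K_0;A,B)$ is cut out in $\mathrm{GL}_2$ by the entry condition on $\left(\begin{smallmatrix} a & b \\ 0 & a^{-1}\end{smallmatrix}\right)$ (namely $x_{21}=0$, $x_{11}x_{22}=1$) together with the defining equations of $A$ on $x_{11}$ and of $B$ on $x_{12}$, and similarly for $\mathbf{D}_\infty(K_0;A)$, all coefficients remain in $\overline{C(t)}$, and these groups are defined over $\overline{C(t)}$. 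To reach the group of a general $\mathcal{F}=\delta_x^2+r_1\delta_x+r_2$, I would invoke the recovery of \S\ref{recover}: $\pgal(\mathcal{E}/K)\subseteq\mathbb{G}_m(K_0)$ is computed over $\overline{C(t)}$ by Algorithm~\ref{rational} with $\eta=-\tfrac{1}{2}\partial_t r_1$, the group $\Lambda$ and the surjections $\varphi,\psi$ come from Algorithms~\ref{comparison} and \ref{recover-gp} over $\overline{C(t)}$, and $\mu_\nu\subseteq C$; the exact sequence \eqref{gal-exact} then realizes $\pgal(\mathcal{F}/K)$ from data all defined over $\overline{C(t)}$.

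I expect the main obstacle to be this last step: confirming that the field of definition survives the fibered-product and $\mu_\nu$-quotient constructions that assemble $\pgal(\mathcal{F}/K)$ out of $\pgal(\mathcal{D}/K)$, $\pgal(\mathcal{E}/K)$, and $\Lambda$. Concretely, in the basis $\{wy_1,wy_2\}$ the elements of $\pgal(\mathcal{F}/K)$ are scalar multiples $e_\sigma M_\sigma$ of the $\mathrm{SL}_2$-matrices $M_\sigma\in\pgal(\mathcal{D}/K)$, so one must write explicit differential-polynomial defining equations in the matrix entries — the $\mathrm{SL}_2$-conditions after normalizing the determinant, the $\mathbb{G}_m$-condition on the scalar $e_\sigma$, and the compatibility condition cutting out the fibered product through $\varphi$ and $\psi$ — and verify that every coefficient is that of an operator in $\overline{C(t)}[\partial_t]$. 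Passing to a quotient does not preserve the field of definition in general; here it does because $\mu_\nu\subseteq C$ and because $\varphi$ and $\psi$ are themselves given by operators over $\overline{C(t)}$.
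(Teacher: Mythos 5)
Your proposal matches the paper's own (implicit) argument: the paper presents this theorem as a direct consequence of the algorithms, whose outputs---as recorded in the remarks following Algorithms~\ref{rational} and~\ref{exponential}, and in the structure of Algorithms~\ref{borel}, \ref{dihedral}, \ref{comparison}, and~\ref{recover-gp}---are assembled entirely from operators in $\overline{C(t)}[\partial_t]$ and finite groups contained in $C$. Your case-by-case tracking of fields of definition, including through the fibered product and $\mu_\nu$-quotient in \eqref{gal-exact}, is a correct and somewhat more explicit rendering of exactly that argument.
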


\subsection{Improvements}In computer algebra, one usually computes the partial-fraction decomposition of a rational function by computing the square-free factorization\footnote{If $F$ is a field of characteristic zero, and $p\in F[x]$ is a polynomial, we say that $p=\prod_{i=1}^n(p_i)^i$ is a square-free factorization of $p$ if $p_i\in F[x]$ for each $i$, and $\mathrm{gcd}_x(p_i,p_j)=1$ for $i\neq j$. The square-free factorization is unique up to multiplication by constant polynomials, and it can be computed using only the Euclidean algorithm and algebraic differentiation.} of its denominator \cite{geddes:1992}. In the algorithms of \S\ref{order1}, we stopped short of computing a complete square-free factorization of the denominators involved, because the polynomial $d$ computed in each algorithm was sufficient to establish our results; namely, that the algorithms work, their outputs are defined over $\overline{C(t)}$, and that no factorizations of polynomials are necessary (beyond those which can be carried out using the Euclidean algorithm). The use of a single denominator $d$ (and its powers) also allowed us to reduce the number of indices involved in our presentation. We believe that this has improved the clarity of our presentation. In practice, one should reduce the size of the system of linear equations considered in each algorithm by working with square-free factorizations of the denominators. 

Each algorithm in \S\ref{order1} constructs a first order inhomogeneous differential equation with undetermined coefficients (these are the equations~\eqref{nrational},~\eqref{ihomh}, and~\eqref{ihom-comp}), and one then finds a minimal set of values for the coefficients such that the differential equation has a rational solution. One could also solve these differential equations abstractly, using any of the usual methods in computer algebra (e.g., Hermite reduction, Rothstein-Trager, Risch$,\dots$) \cite{geddes:1992}, and then find a minimal set of values for the undetermined coefficients such that the solution is rational. Although any of these approaches would have to be abstractly equivalent to ours, they may well be more efficient in some situations. Therefore, the eventual implementation of our algorithms should first decide, perhaps heuristically, how to set up the systems of linear equations in order to minimize the amount of computation.

\subsection{Future directions}
The differential operators produced by the algorithms presented in \S\ref{order1} arise as solutions to a \emph{creative telescoping} problem, namely: given $\eta\in F$, where $F$ is defined as in the beginning of \S\ref{order1}, find $\mathcal{L}\in\overline{C(t)}[\partial_t]$ and $f\in F$ such that $\mathcal{L}(\eta)=\delta_xf$ (see \cite[\S 1]{chen-kauers-singer:2012} for a more precise and general definition); such an operator $\mathcal{L}$ is called a \emph{telescoper} for $\eta$. In \cite{chen-kauers-singer:2012} and \cite{chen-singer:2012}, the authors propose algorithms which, in particular, solve the problem of finding a telescoper $\mathcal{L}\in C(t)[\partial_t]$ for $\eta\in\overline{C(x,t)}$ arbitrary. Lemma~\ref{singer1} replaces a telescoping problem over $F$ with a ``twisted'' telescoping problem over the smaller field $K$ (see part \eqref{tcond} of Lemma~\ref{singer1}), for the restrictive class of algebraic functions $\eta\in\overline{C(x,t)}$ such that $\eta^n\in \overline{C(t)}(x)$ for some $n\in\mathbb{N}$, and also in the case that $\eta$ is transcendental over $K$, provided that $\smash{\frac{\delta_x\eta}{\eta}}$ and $\smash{\frac{\partial_t\eta}{\eta}}$ belong to $\overline{C(t)}(x)$. It would be interesting to see whether one can use creative telescoping to compute $\mathrm{PPV}$ groups for higher-order systems, or over more general fields than we consider here. It would also be interesting to see whether our approach, which handles a very restrictive class of telescoping problems, can be generalized in order to ``descend'' some telescoping problems to simpler base fields, with Lemma~\ref{singer1} as a prototype.

We mentioned in the introduction a result of \cite{gill-gor-ov:2012}, which says that the $\mathrm{PPV}$ groups we wish to compute are actually defined over $C(t)$, while our algorithms find differential polynomial equations which are defined over $\overline{C(t)}$. This is because we work with the solutions to \ref{original} found by Kovacic's algorithm, which are not (and cannot be) defined over $C(t)$ in general. It would be desirable to have algorithms to compute the defining equations for the $\mathrm{PPV}$ group over the smaller field $C(t)$, but we do not yet know whether it is possible to extend our methods to produce such an algorithm.

It would be desirable to have algorithms in the case of several parametric derivations. We do not yet know whether it is possible to extend our methods to this more general setting in a straightforward way. In \cite{gor-ov:2012}, the authors reduce the number of compatibility conditions that one has to check when working with several parametric derivations. It would be interesting to see if the results of \cite{gor-ov:2012} can be used to extend our methods to the setting of several parametric derivations, by working with one parametric derivation at a time.

%------------------------------------------------------------------

\begin{acknowledgements} Professor Alexey Ovchinnikov suggested this problem to me last summer, and has been an invaluable source of advice, knowledge and insight ever since. I am very thankful to him for his patience, kindness and generosity, without which this work would not have been possible. I have also benefited enormously from many conversations about this work with the following people: Phyllis Cassidy, Richard Churchill, Thomas Dreyfus, Guillaume Duval, Henri Gillet, Raymond Hoobler, Bill Keigher, Michael Singer, and William Sit. My deep thanks go to all of them for taking the time to help me improve this work. I presented the results in this paper at the Kolchin Seminar in New York, the Joint Mathematics Meetings in Boston, and the Workshop on Differential Schemes and Differential Cohomology in Banff and Calgary, Canada. I thank the audiences of those talks for their very valuable comments, suggestions, and criticisms. \end{acknowledgements}

%------------------------------------------------------------------

 \bibliography{kovbib}{}  \bibliographystyle{spmpsci} \nocite{*}
 
 \end{document}